\documentclass[12pt]{article}

\usepackage[a4paper,margin=1in]{geometry}

\usepackage{amssymb,amsthm,amsmath,mathtools}

\mathtoolsset{showonlyrefs=true}

\numberwithin{equation}{section}
\numberwithin{figure}{section}
\numberwithin{table}{section}

\theoremstyle{definition}

\newtheorem{theorem}{Theorem}[section]
\newtheorem{lemma}[theorem]{Lemma}
\newtheorem{corollary}[theorem]{Corollary}

\newtheorem{fact}[theorem]{Fact}
\newtheorem{proposition}[theorem]{Proposition}

\theoremstyle{definition}
\newtheorem{definition}[theorem]{Definition}
\newtheorem{example}[theorem]{Example}

\theoremstyle{remark}
\newtheorem{remark}[theorem]{Remark}

\usepackage{enumitem}
\setlist[enumerate,1]{label=(\arabic*)}
\setlist[enumerate,2]{label=(\roman*)}

\usepackage{hyperref}
\usepackage[style=alphabetic,isbn=false,url=false,doi=false,giveninits=true,maxnames=9,sorting=nyt,backend=bibtex]{biblatex} 
\renewbibmacro{in:}{}
\bibliography{reference.bib}
\DeclareFieldFormat[article,periodical]{volume}{\mkbibbold{#1}}

\DeclareMathOperator*{\einf}{ess \, inf}
\DeclareMathOperator*{\esup}{ess \, sup}

\title{Borell--Brascamp--Lieb inequality with finitely many output functions}
\author{Takashi Satomi}

\begin{document}

\maketitle

\begin{abstract}

The classical Borell--Brascamp--Lieb inequality for multiple functions is an integral inequality relating finitely many input functions to a single output function.
In this paper, we give an extension that allows a distinct output function for each input function.

More precisely, we establish the following inequality.
For a weight $ \lambda = ( \lambda_1 , \dots , \lambda_m ) $, we set $ z_\lambda ( x ) \coloneqq \sum_{ i = 1 }^m \lambda_i x_i $ and denote by $ M_p^\lambda $ the weighted power mean of power $ p $.
In addition, the power transform $ Q_d $ is defined as the continuous extension of $ p \mapsto p / ( 1 - d p ) $.
We consider integrable functions $ f_1 , \dots , f_m , g_1 , \dots , g_m \colon \mathbb{ R }^d \to \mathbb{ R }_{ \geq 0 } $ satisfying $ 0 < \| f_i \|_1 < \infty $ for every $ i = 1 , 2 , \dots , m $.
Then one has
\begin{align}
\einf_{ x = ( x_1 , \dots , x_m ) }
M_p^\lambda \left( \frac{ g_1 ( z_\lambda ( x ) ) }{ f_1 ( x_1 ) }, \dots , \frac{ g_m ( z_\lambda ( x ) ) }{ f_m ( x_m ) } \right)
\leq M_{ Q_d ( p ) }^\lambda \left( \frac{ \| g_1 \|_1 }{ \| f_1 \|_1 }, \dots , \frac{ \| g_m \|_1 }{ \| f_m \|_1 } \right)
\end{align}
for any $ - \infty \leq p \leq 1 / d $, where the essential infimum is taken over the set of points satisfying $ f_i ( x_i ) > 0 $ for every $ i = 1 , 2 , \dots , m $.

When all output functions $ g_1 , \dots , g_m $ are equal, this recovers the classical Borell--Brascamp--Lieb inequality for multiple functions.
When $ p = 0 $ and the hypothesis is imposed pointwise, it coincides with a special case of the multi-output Pr\'ekopa--Leindler inequality of Cordero-Erausquin--Maurey.

\end{abstract}

\noindent
\textbf{Keywords:} Borell--Brascamp--Lieb inequality, Pr\'ekopa--Leindler inequality, weighted power mean, multiple output functions, integral inequality, convexity, slice integrals.

\noindent
\textbf{MSC 2020:} 26D15 (Primary); 52A40, 39B62, 28A35, 46E30, 60E15 (Secondary).

\section{Introduction}

Deriving inequalities between integrals from pointwise comparisons involving several functions is a fundamental problem in convex geometry and analysis.
The Borell--Brascamp--Lieb inequality \cite[Theorem 3.1]{MR404559} \cite[Theorem 3.3]{MR450480} is a fundamental result of this type, and its version for multiple functions (Theorem~\ref{thm:multi input BBL}) is an integral inequality relating finitely many input functions $ f_1 , f_2 , \dots , f_m $ to one output function $ g $ through weighted power means.

In contrast, our main theorem (Theorem~\ref{thm:main-BBL}) allows an output function $ g_i $ to be assigned independently to each input function $ f_i $, and hence concerns an integral inequality involving finitely many distinct output functions.
To formulate both the classical Borell--Brascamp--Lieb inequality and our main result, we first introduce weighted power means.

\begin{definition}[Weighted power mean {\cite[Sections 2.2 and 2.3]{MR46395}}]
\label{def:weighted power mean definition}

A tuple $ \lambda = ( \lambda_1 , \lambda_2 , \dots , \lambda_m ) $ of real numbers is called a weight if
\begin{align}
    \lambda_1 , \lambda_2 , \dots , \lambda_m > 0, & &
    \sum_{ i = 1 }^m \lambda_i = 1.
\end{align}
For a weight $ \lambda = ( \lambda_1 , \lambda_2 , \dots , \lambda_m ) $ and $ - \infty \leq p \leq \infty $, we define the weighted power mean $ M_p^\lambda \colon \mathbb{ R }_{ \geq 0 }^m \to \mathbb{ R }_{ \geq 0 } $ by
\begin{align}
  M_p^\lambda ( a_1 , a_2 , \dots , a_m )
  & \coloneqq
  \left\{
  \begin{aligned}
    & \prod_{ i = 1 }^m { a_i }^{ \lambda_i } & & \text{ if $ p = 0 $ }, \\
    & \min \{ a_1 , a_2 , \dots , a_m \} & & \text{ if $ p = - \infty $ }, \\
    & \max \{ a_1 , a_2 , \dots , a_m \} & & \text{ if $ p = \infty $ }, \\
    & 0 & & \text{ if $ - \infty \leq p \leq 0 $ and $ \prod_{ i = 1 }^m a_i = 0 $ }, \\
    & \left( \sum_{ i = 1 }^m \lambda_i { a_i }^p \right)^{ 1 / p } & & \text{ otherwise }
  \end{aligned}
  \right.
\end{align}
for $ a_1 , a_2 , \dots , a_m \geq 0 $.
    
\end{definition}

In Definition~\ref{def:weighted power mean definition}, the cases are consistent where they overlap, and the values for $ p = 0 , \pm \infty $ and for $ \prod_{ i = 1 }^m a_i = 0 $ agree with the continuous extension of $ \left( \sum_{ i = 1 }^m \lambda_i { a_i }^p \right)^{ 1 / p } $ (Lemma~\ref{lem:weighted power mean continuous}).

For a tuple $ \lambda = ( \lambda_1 , \lambda_2 , \dots , \lambda_m ) $ of positive real numbers, we define $ z_\lambda \colon ( \mathbb{ R }^d )^m \to \mathbb{ R }^d $ by
\begin{align}
    z_\lambda ( x_1 , x_2 , \dots , x_m ) \coloneqq \sum_{ i = 1 }^m \lambda_i x_i
\end{align}
for $ x_1 , x_2 , \dots , x_m \in \mathbb{ R }^d $.
For a function $ f \colon \mathbb{ R }^d \to \mathbb{ R }_{ \geq 0 } $, we write
\begin{align}
    S ( f ) \coloneqq \{ x \in \mathbb{ R }^d \mid f ( x ) > 0 \},
\end{align}
and $ \| f \|_p $ for the $ L^p $ norm of $ f $ with respect to Lebesgue measure $ dx $ on $ \mathbb{ R }^d $.
Then the following theorem holds.

\begin{theorem}[Borell--Brascamp--Lieb inequality for multiple functions]
    \label{thm:multi input BBL}

Let
\begin{align}
    f_1 , f_2 , \dots , f_m , g \colon \mathbb{ R }^d \to \mathbb{ R }_{ \geq 0 }
\end{align}
be integrable functions satisfying $ 0 < \| f_i \|_1 < \infty $ for every $ i = 1 , 2 , \dots , m $, and set
\begin{align}
    S = S ( f_1 ) \times S ( f_2 ) \times \dots \times S ( f_m ). \label{eq:multi input BBL S definition}
\end{align}
For any weight $ \lambda = ( \lambda_1 , \lambda_2 , \dots , \lambda_m ) $ and any $ - 1 / d \leq \alpha \leq \infty $, if
    \begin{align}
        M_\alpha^\lambda ( f_1 ( x_1 ) , \dots , f_m ( x_m ) )
        \leq g \left( z_\lambda ( x ) \right) & & \text{ a.e. $ x = ( x_1 , \dots , x_m ) \in S $ } \label{eq:multi input BBL assumption}
    \end{align}
with respect to Lebesgue measure on $ S $, then
    \begin{align}
        M_{ \alpha / (1 + d \alpha ) }^\lambda ( \| f_1 \|_1 , \| f_2 \|_1 , \dots , \| f_m \|_1 ) 
        \leq \| g \|_1. \label{eq:multi input BBL claim}
    \end{align}
Here, for $ \alpha = - 1 / d , \infty $, we set $ \alpha / (1 + d \alpha ) = - \infty , 1 / d $, respectively.
    
\end{theorem}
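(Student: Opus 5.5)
This is the classical multi-function Borell--Brascamp--Lieb inequality. I will prove it by induction on the dimension $d$, with the one-dimensional case done by a transport (or level-set) argument. The case $m=2$ is standard, and the general case could be reduced to it by grouping. I prefer to treat all $m$ directly, because the one-dimensional argument works verbatim for $m$ functions.

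\textbf{Preliminary reductions.} First I reduce to $\alpha$ finite, or handle $\alpha=\infty$ via $M_\infty^\lambda\ge M_\alpha^\lambda$ for every finite $\alpha$. I also reduce to bounded $f_i$ with compact support by monotone approximation: truncations $\min(f_i,n)\mathbf 1_{B_n}$ still satisfy the hypothesis with the same $g$, because $M_\alpha^\lambda$ is monotone in each argument. Passing to the limit uses continuity of $M_q^\lambda$ (Lemma~\ref{lem:weighted power mean continuous}). Since $0<\|f_i\|_1$ and every $\lambda_i>0$, the hypothesis on the product set $S$ of positive measure is meaningful. The a.e. hypothesis is handled by redefining $g$ on a null set, or by working with essential suprema.

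\textbf{Case $d=1$.} Set $F_i=\|f_i\|_1$ and let $u_i\colon(0,1)\to\mathbb R$ be the increasing transport maps, defined by $\int_{-\infty}^{u_i(t)} f_i=tF_i$. Then $u_i'(t)=F_i/f_i(u_i(t))$ a.e., and $u_i(t)\in S(f_i)$. Put $z(t)=\sum\lambda_iu_i(t)$, which is strictly increasing. This gives
\[
\|g\|_1\ge\int_0^1 g(z(t))z'(t)\,dt\ge\int_0^1 M_\alpha^\lambda\bigl(f_1(u_1),\dots,f_m(u_m)\bigr)\,M_1^\lambda\Bigl(\tfrac{F_1}{f_1(u_1)},\dots,\tfrac{F_m}{f_m(u_m)}\Bigr)dt.
\]
The reverse Hölder/mean inequality $M_\alpha^\lambda(a)M_1^\lambda(b)\ge M_{\alpha/(1+\alpha)}^\lambda(a_1b_1,\dots)$ for $\alpha\ge-1$ then bounds the integrand below by $M_{\alpha/(1+\alpha)}^\lambda(F_1,\dots,F_m)$. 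This is the generalized Hölder inequality for power means with exponents $1/\alpha+1=1/\beta$.

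The a.e. issue needs care. The map $t\mapsto(u_1(t),\dots,u_m(t))$ traces a curve in $S$ of measure zero, so an a.e. hypothesis on $S$ does not apply directly along it. To fix this I will use a translation trick: replace $u_i$ by $u_i+s_i$ with $\sum\lambda_is_i=0$, or more simply average over shifts. By Fubini, for a.e. choice of shifts the hypothesis holds along the curve. Alternatively I will first prove the inequality for $g$ replaced by the lower semicontinuous essential regularization.

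\textbf{Induction on $d$.} Write $x=(y,s)\in\mathbb R^{d-1}\times\mathbb R$. For fixed last coordinates $s_i$ with $s=\sum\lambda_is_i$, the slices $f_i(\cdot,s_i)$ and $g(\cdot,s)$ satisfy the hypothesis a.e. by Fubini. The $(d-1)$-dimensional statement then gives
\[
G(s)\ge M_{\alpha/(1+(d-1)\alpha)}^\lambda\bigl(F_1(s_1),\dots,F_m(s_m)\bigr),
\]
where $F_i$ and $G$ are the slice integrals. This holds whenever all $F_i(s_i)>0$, and the slice sets $S(F_i)$ have positive measure. Applying the one-dimensional case with exponent $\beta=\alpha/(1+(d-1)\alpha)\ge-1$ yields exponent $\beta/(1+\beta)=\alpha/(1+d\alpha)$, which closes the induction.

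The condition $\alpha\ge-1/d$ is exactly what guarantees $\beta\ge-1$. The endpoint $\alpha=-1/d$ gives $\beta=-1$, and there the one-dimensional step yields the limit exponent $-\infty$.

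\textbf{Main obstacle.} I expect the hardest part to be the measure-theoretic bookkeeping: the a.e. hypothesis restricted to the set $S$, the measurability of the slices, and the case where some slices vanish. The cleanest remedy I would try first is to replace $g$ by
\[
\tilde g(z)=\operatorname{ess\,sup}\bigl\{M_\alpha^\lambda(f_i(x_i))\ :\ z_\lambda(x)=z,\ x\in S\bigr\},
\]
taken in the sense of disintegration along the fibres $\{z_\lambda(x)=z\}$. The hypothesis gives $\tilde g\le g$ a.e., and $\tilde g$ satisfies the pointwise hypothesis needed for the transport argument. The power-mean Hölder step is routine.
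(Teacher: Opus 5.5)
Your slicing induction is sound, and it is the same mechanism as the paper's dimension-additivity lemma (Lemma~\ref{lem:dimension-additivity}). With $p=-\alpha$, your identity $\beta/(1+\beta)=\alpha/(1+d\alpha)$ is $Q_1(Q_{d-1}(p))=Q_d(p)$ from Lemma~\ref{lem:Q_d compose}. The truncation reduction and the pointwise H\"older step are also fine: $M_{\alpha/(1+\alpha)}^\lambda(a_1b_1,\dots,a_mb_m)\le M_\alpha^\lambda(a)M_1^\lambda(b)$ is the case $d=1$ of Lemma~\ref{lem:Holder}.

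\textbf{The gap.} The gap is the one you flag yourself, and none of your remedies closes it as stated. The one-dimensional computation uses the hypothesis at the points $u(t)=(u_1(t),\dots,u_m(t))$, which form a null set in $\mathbb{R}^m$.

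Shifting in space, $u_i\mapsto u_i+s_i$ with $\sum_i\lambda_is_i=0$, keeps $z(t)$ but decouples the hypothesis from the Jacobian. The Jacobian $z'(t)=\sum_i\lambda_iF_i/f_i(u_i(t))$ involves $f_i(u_i(t))$, while the hypothesis now controls $f_i(u_i(t)+s_i)$. For merely measurable $f_i$ there is no pointwise relation between the two, so a further limiting argument would be needed, and ``averaging over shifts'' is not spelled out.

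Your claim that the fibrewise essential supremum $\tilde g$ satisfies the hypothesis pointwise is false. $\tilde g$ ignores null subsets of each fibre $\{z_\lambda=z\}\cap S$, while your curve meets each fibre in one point. For example (with $m\ge2$), pick $x^*\in S$ and make every $f_i$ huge at $x_i^*$. This changes neither $\tilde g$ nor the hypothesis of the theorem, but it breaks $M_\alpha^\lambda(f_1(x_1^*),\dots,f_m(x_m^*))\le\tilde g(z_\lambda(x^*))$.

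One can show that the inequality holds at $u(t)$ for a.e.\ $t$, but this needs three things you do not supply:
\begin{enumerate}
\item a Lebesgue-density argument for the superlevel sets $\{f_i>\tau\}$ at $u_i(t)$;
\item positivity of the fibre measure of their product near $u(t)$;
\item measurability of $\tilde g$.
\end{enumerate}
This is the kind of work the paper delegates to Fact~\ref{fact:Brascamp--Lieb} and to the Fubini step in Lemma~\ref{lem:multifunction-minimum-integral}. The function $B$ in that Fact is exactly such an essential supremum, and Brascamp--Lieb show it is lower semicontinuous.

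\textbf{A repair.} Inside your framework, shift the parameters instead of the points. For small $s=(s_2,\dots,s_m)$ set $x_s(t)=(u_1(t),u_2(t+s_2),\dots,u_m(t+s_m))$ for $t\in I_s=\{t: t,\ t+s_i\in(0,1)\ \text{for all } i\}$. After the unimodular change of variables $\tau_1=t$, $\tau_i=t+s_i$, the map $(t,s)\mapsto x_s(t)$ becomes the product map $\tau\mapsto(u_1(\tau_1),\dots,u_m(\tau_m))$. This map pushes Lebesgue measure on $(0,1)^m$ forward to $\prod_i F_i^{-1}f_i(x_i)\,dx_i$, which is absolutely continuous. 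By Fubini, for a.e.\ $s$ one has $x_s(t)\in S$ and the hypothesis at $x_s(t)$ for a.e.\ $t\in I_s$. Every component is still a transport map, so your computation with $z_s(t)=\sum_i\lambda_iu_i(t+s_i)$ gives $\|g\|_1\ge|I_s|\,M_{\alpha/(1+\alpha)}^\lambda(F_1,\dots,F_m)$. Since $|I_s|\to1$ as $s\to0$, the one-dimensional case follows.

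\textbf{Comparison with the paper.} With this repair your proof is valid, and it differs from the paper's in dimension one. The paper does not prove Theorem~\ref{thm:multi input BBL} from scratch. It cites the two-function case of Ishige--Liu--Salani and iterates in $m$ by normalizing the weights. It also recovers the theorem as the common-output case of Theorem~\ref{thm:main-BBL} (Section~\ref{subsec:original BBL same output}). There, the one-dimensional proof normalizes $\|f_i\|_\infty$, which turns a common output into distinct outputs. It then applies the iterated Brascamp--Lieb minimum-type inequality and finishes with Lemmas~\ref{lem:Holder infinity}, \ref{lem:integral inequality} (at power $p\le1$), and \ref{lem:Holder}.

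Transport buys directness: all $m$ at once, no normalization of sup norms, and the exponent $\alpha/(1+\alpha)$ from one pointwise H\"older inequality. The paper's route is insensitive to null sets by construction. More importantly for the paper, it handles distinct outputs in the full range. The straightforward transport analogue for $g_1,\dots,g_m$ ends with the reverse Minkowski inequality at power $Q_1(p)$, which is unavailable for $p>1/2$ (Remarks~\ref{rem:integral inequality sharpest} and \ref{rem:high power}).
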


There are several known formulations of Theorem~\ref{thm:multi input BBL}, and the above formulation differs slightly from those of Borell and Brascamp--Lieb.
The formulation above is the multiple-function version obtained from the two-function version of Ishige--Liu--Salani \cite[Theorem 1.1]{MR4958493}.
The multiple-function version follows inductively from the two-function case by normalizing the weights (Section~\ref{sec:original BBL}).

The case $ \alpha = 0 $ of Theorem~\ref{thm:multi input BBL} is called the Pr\'ekopa--Leindler inequality.
On the other hand, by taking $ \alpha = \infty $ and taking the input and output functions to be indicator functions, one obtains the weighted Brunn--Minkowski inequality.
The relations among these inequalities and Theorem~\ref{thm:multi input BBL} are discussed in detail by Gardner \cite[Section 10]{MR1898210}.

In Theorem~\ref{thm:multi input BBL}, we put
\begin{align}
    p = - \alpha, & & x = ( x_1 , x_2 , \dots , x_m ) \in S, & & z = z_\lambda ( x ).
\end{align}
Then
\begin{align}
    \frac{ g ( z ) }{ M_\alpha^\lambda ( f_1 ( x_1 ) , f_2 ( x_2 ) , \dots , f_m ( x_m ) ) }
    = M_p^\lambda \left( \frac{ g ( z ) }{ f_1 ( x_1 ) } , \frac{ g ( z ) }{ f_2 ( x_2 ) }, \dots , \frac{ g ( z ) }{ f_m ( x_m ) } \right) \label{eq:same output}
\end{align}
holds (Section~\ref{subsec:original BBL same output}).
Thus, the hypothesis in \eqref{eq:multi input BBL assumption} can be written as
\begin{align}
    \einf_{ x = ( x_1 , x_2 , \dots , x_m ) \in S } M_p^\lambda \left( \frac{ g ( z_\lambda ( x ) ) }{ f_1 ( x_1 ) } , \frac{ g ( z_\lambda ( x ) ) }{ f_2 ( x_2 ) }, \dots , \frac{ g ( z_\lambda ( x ) ) }{ f_m ( x_m ) } \right)
     \geq 1,
\end{align}
where the essential infimum is taken with respect to Lebesgue measure on $ S $.
In this paper, we replace the common output function $ g $ by output functions $ g_i $ depending on the index $ i $.
More precisely, in addition to the $ m $ input functions $ f_1 , f_2 , \dots , f_m $, we consider $ m $ output functions $ g_1 , g_2 , \dots , g_m $, and we define
\begin{align}
L ( x ) \coloneqq M_p^\lambda \left( \frac{ g_1 ( z_\lambda ( x ) ) }{ f_1 ( x_1 ) } , \frac{ g_2 ( z_\lambda ( x ) ) }{ f_2 ( x_2 ) }, \dots , \frac{ g_m ( z_\lambda ( x ) ) }{ f_m ( x_m ) } \right), & &
K \coloneqq \einf_{ x \in S } L ( x ) \label{eq:main theorem essential inf}
\end{align}
for $ x = ( x_1 , x_2 , \dots , x_m ) \in S $.
Unlike the common-output case, in general $ L ( x ) $ cannot be written in a simple separated form corresponding to \eqref{eq:same output}.
Thus, one cannot directly reduce the problem to Theorem~\ref{thm:multi input BBL}.
We therefore establish a multi-output Borell--Brascamp--Lieb inequality that estimates $ K $ by the weighted power mean of the ratios $ \| g_i \|_1 / \| f_i \|_1 $.

We define the power transform $ Q_d \colon [ - \infty , 1 / d ] \to [ - \infty , \infty ] $ by
\begin{align}
    Q_d ( p ) \coloneqq 
  \left\{
  \begin{aligned}
    &   \frac{ p }{ 1 - d p } & & \text{ if $ - \infty < p < \dfrac{ 1 }{ d } $ } \\
    & - \frac{ 1 }{ d } & & \text{ if $ p = - \infty $ } \\
            & \infty & & \text{ if $ p = \dfrac{ 1 }{ d } $ }
  \end{aligned}
  \right. . \label{eq:power transform}
\end{align}
Then our main theorem is as follows.

\begin{theorem}
\label{thm:main-BBL}

Let $ d $, $ f_1 , f_2 , \dots , f_m $, $ S $, and $ \lambda $ be as in Theorem~\ref{thm:multi input BBL}.
For any $ - \infty \leq p \leq 1 / d $ and any integrable functions $ g_1 , g_2 , \dots , g_m \colon \mathbb{ R }^d \to \mathbb{ R }_{ \geq 0 } $, we define $ L $ and $ K $ by \eqref{eq:main theorem essential inf}.
Then
\begin{align}
K
\leq M_{ Q_d ( p ) }^\lambda \left( \frac{ \| g_1 \|_1 }{ \| f_1 \|_1 } , \frac{ \| g_2 \|_1 }{ \| f_2 \|_1 } , \dots , \frac{ \| g_m \|_1 }{ \| f_m \|_1 } \right). \label{eq:main-BBL-claim}
\end{align}

\end{theorem}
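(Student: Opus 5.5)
The plan is the standard two-step scheme for Borell--Brascamp--Lieb type inequalities: prove the one-dimensional case by a mass-transport parametrization, then induct on the dimension by slicing. The induction closes because the power transform composes, $Q_1 \circ Q_{d-1} = Q_d$, since $Q_1(p/(1-(d-1)p)) = p/(1-dp)$, with the endpoint conventions checked separately.

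Some reductions come first. By the monotonicity of $M_p^\lambda$ in $p$ and in its arguments, together with the homogeneity $M_p^\lambda(c a) = c M_p^\lambda(a)$, we may assume $K>0$. Otherwise there is nothing to prove, because the right-hand side of \eqref{eq:main-BBL-claim} is nonnegative. Replacing $g_i$ by $g_i/K$ then reduces us to the case $K=1$, i.e. $M_p^\lambda(g_i(z_\lambda(x))/f_i(x_i)) \ge 1$ a.e. on $S$. Rescaling each $f_i$ by a constant rescales $g_i$ consistently, so we may also normalize $\|f_i\|_1 = 1$. By monotone convergence we may further restrict to bounded compactly supported $f_i$ (truncate $f_i$ to $f_i\mathbf 1_{B_R}$). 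This does not decrease $K$, since it shrinks $S$, and $\|f_i\|_1$ converges.

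\emph{One-dimensional case ($d=1$).} Let $u_i\colon(0,1)\to\mathbb R$ be the quantile map of $f_i$, defined by $\int_{-\infty}^{u_i(t)} f_i = t$. It is increasing, a.e. differentiable, and satisfies $f_i(u_i(t))u_i'(t)=1$. Put $z(t)=\sum_i\lambda_i u_i(t)$, which is strictly increasing with $z'=\sum_j \lambda_j/f_j(u_j)$. The change of variables $s=z(t)$ gives $\|g_i\|_1 \ge \int_0^1 g_i(z(t))z'(t)\,dt = \int_0^1 c_i(t)\,dt$, where
\[
c_i(t) = r_i(t)\, y_i(t)^{-1}\sum_j\lambda_j y_j(t), \qquad r_i = \frac{g_i(z)}{f_i(u_i)}, \qquad y_i = \frac{1}{f_i(u_i)} .
\]
The hypothesis says $M_p^\lambda(r_1,\dots,r_m)\ge1$ for a.e. $t$. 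The heart of the matter is the pointwise inequality
\[
M_{Q_1(p)}^\lambda(c_1,\dots,c_m)\;\ge\; M_p^\lambda(r_1,\dots,r_m), \qquad\text{i.e.}\qquad M^\lambda_{p/(1-p)}\Big(\frac{r_i}{y_i}\Big)\, M_1^\lambda(y) \ge M_p^\lambda(r).
\]
I expect this to follow from H\"older's inequality with exponents matched to $1-p$ and $p$. Indeed, $r_i=(r_i/y_i)\cdot y_i$ and $\frac{1}{p}=\frac{1-p}{p}+1$, which is exactly the generalized H\"older inequality $M_p^\lambda(ab)\le M_{s}^\lambda(a)M_{t}^\lambda(b)$ for $\frac1p=\frac1s+\frac1t$. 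The signs of $p$ and the cases $p=0,\pm\infty,1$ will need separate but routine treatment.

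Integrating the pointwise inequality then requires $\int_0^1 M_q^\lambda(c(t))\,dt\le M_q^\lambda\big(\int c_1,\dots,\int c_m\big)$, a reverse Minkowski inequality, which holds only for $q\le1$. I expect this integration step to be the main obstacle, since $q=Q_1(p)$ exceeds $1$ when $p>1/2$. To get around it, I would first try an alternative parametrization rather than insisting on normalized quantiles. For instance, choose the transport so that the masses flow at rates proportional to $\|g_i\|_1$, or reparametrize $t$ with a density chosen so that the resulting $c_i$ are constant in $t$; then Jensen is not needed at all. Failing that, I would handle $q>1$ by splitting off a factor: apply the result for exponent $p'=1/2$ and combine via monotonicity in $p$. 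This loses sharpness, so it would only be a fallback.

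\emph{Induction on dimension.} Write $x_i=(x_i',s_i)\in\mathbb R^{d-1}\times\mathbb R$ and define slice integrals $F_i(s)=\int f_i(\cdot,s)$ and $G_i(s)=\int g_i(\cdot,s)$. For a.e. fixed $(s_1,\dots,s_m)$ with $F_i(s_i)>0$, the slices of $f_i, g_i$ satisfy the hypothesis with the same bound $K\ge 1$ on the $(d-1)$-dimensional product, a Fubini argument for the essential infimum. The induction hypothesis then gives $M_{Q_{d-1}(p)}^\lambda\big(G_i(\sum\lambda_j s_j)/F_i(s_i)\big)\ge1$ a.e. Applying the one-dimensional case with exponent $Q_{d-1}(p)\le 1$ and using $Q_1\circ Q_{d-1}=Q_d$ yields \eqref{eq:main-BBL-claim}. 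The measure-theoretic care needed for essential infima on slices is routine via Fubini.
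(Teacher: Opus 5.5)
Your slicing induction is sound: it is the paper's dimension-additivity lemma (Lemma~\ref{lem:dimension-additivity}) with the two factors taken in the other order. The one-dimensional step, however, has a genuine gap for $1/2<p\le 1$, and this gap is central. Your induction calls the one-dimensional case at power $Q_{d-1}(p)$, which lies in $(1/2,1]$ exactly when $1/(d+1)<p\le 1/d$. So as written you prove the theorem only for $p\le 1/(d+1)$, and the paper notes (Section~\ref{subsec:known inequalities low power}) that this range already follows from the classical inequality via H\"older and reverse Minkowski.

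The failing step is integrating $M^\lambda_{Q_1(p)}(c(t))\ge 1$ in $t$. This needs reverse Minkowski at power $Q_1(p)>1$, where it is false. Your proposed repairs do not work:
\begin{enumerate}
\item A common reparametrization $t=\phi(s)$ multiplies every $c_i$ by the same factor $\phi'(s)$. It therefore preserves the ratios $c_i/c_j$ and each $\int c_i$, so it cannot help.
\item The fallback through $p'=1/2$ runs the wrong way. Since $M^\lambda_{1/2}\le M^\lambda_p$ for $p>1/2$, the hypothesis at $p$ gives nothing at $1/2$. In any case a weaker exponent would not yield the stated, sharp inequality.
\item The remaining suggestions (other transport speeds) are not made concrete.
\end{enumerate}
In fact the information along the monotone transport curve is insufficient. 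Take $m=2$, $\lambda=(1/2,1/2)$, $p=1$ and $\|f_i\|_1=1$ with $(f_1(u_1(t)),f_2(u_2(t)))=(N,1)$ for $t<1/2$ and $(1,N)$ for $t>1/2$. Set $g_i(z(t))=r_i(t)f_i(u_i(t))$ with $r(t)=(0,2)$ for $t<1/2$ and $(2,0)$ for $t>1/2$, and $g_i=0$ off $z((0,1))$. Then $M_1^\lambda(r(t))=1$ along the whole curve, yet $\|g_1\|_1=\|g_2\|_1=(1+1/N)/2<1$. Any correct proof must therefore use the hypothesis off this curve.

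The missing idea, which is how the paper proceeds, has two parts. First, normalize $\|f_i\|_\infty$ rather than $\|f_i\|_1$. This is legitimate because $(f_i,g_i)\mapsto(f_i/\kappa_i,g_i/\kappa_i)$ changes neither $L$ nor the ratios $\|g_i\|_1/\|f_i\|_1$. Second, integrate before applying H\"older. The steps are:
\begin{enumerate}
\item Since $L(x)\,M^\lambda_{-\infty}(f_1(x_1),\dots,f_m(x_m))\le M^\lambda_p(g_1(z),\dots,g_m(z))$ with $z=z_\lambda(x)$, the function $H=M^\lambda_p(g_1,\dots,g_m)/K$ dominates $\min_i f_i(x_i)$ a.e.
\item The multi-function minimum-type inequality (Lemma~\ref{lem:multifunction-minimum-integral}) gives $\sum_i\lambda_i\|f_i\|_1\le\|H\|_1$. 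It is obtained by iterating the two-function Brascamp--Lieb inequality and is in essence one-dimensional Brunn--Minkowski on the level sets $\{f_i>\tau\}$, all of which have positive measure below the common essential supremum.
\item Reverse Minkowski is then used only at the power $p\le 1$, giving $K\,M^\lambda_1(\|f_1\|_1,\dots,\|f_m\|_1)\le M^\lambda_p(\|g_1\|_1,\dots,\|g_m\|_1)$.
\item H\"older is applied only at the end, to these numbers: $M^\lambda_p(\|g_1\|_1,\dots,\|g_m\|_1)\le M^\lambda_1(\|f_1\|_1,\dots,\|f_m\|_1)\,M^\lambda_{Q_1(p)}(\|g_1\|_1/\|f_1\|_1,\dots,\|g_m\|_1/\|f_m\|_1)$.
\end{enumerate}

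A secondary point: your claim that the hypothesis holds for a.e.\ $t$ needs proof. For $m\ge 2$ the curve $t\mapsto(u_1(t),\dots,u_m(t))$ is Lebesgue-null in $\mathbb{R}^m$, while the hypothesis holds only a.e.\ on $S$. This is fixable, e.g.\ via approximate continuity of $f_i$ at $u_i(t)$ and of $g_i$ at $z(t)$ for a.e.\ $t$. The paper avoids the issue through the essential-supremum function $B$ and Fubini.
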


When $ g_1 = g_2 = \dots = g_m $, Theorem~\ref{thm:main-BBL} is equivalent to Theorem~\ref{thm:multi input BBL}.
Moreover, when the output functions $ g_1 , g_2 , \dots , g_m $ are mutually proportional, the problem reduces to the common-output case (Section~\ref{subsec:original BBL same output}).
For $ - \infty \leq p \leq 1 / ( d + 1 ) $, Theorem~\ref{thm:main-BBL} can be derived from Theorem~\ref{thm:multi input BBL} by combining the H\"older inequality (Lemma~\ref{lem:Holder}) with an integral inequality (Lemma~\ref{lem:integral inequality}).
However, for $ 1 / ( d + 1 ) < p \leq 1 / d $, the power falls outside the range in which the integral inequality is valid, so this method does not reduce Theorem~\ref{thm:main-BBL} to Theorem~\ref{thm:multi input BBL} (Section~\ref{subsec:known inequalities low power}).

When $ p = 0 $ and the hypothesis is imposed pointwise, Theorem~\ref{thm:main-BBL} is a special case of the multi-output Pr\'ekopa--Leindler inequality of Cordero-Erausquin--Maurey (Fact~\ref{fact:Cordero-Erausquin--Maurey}).

Theorem~\ref{thm:main-BBL} is sharp in the following sense.
When we fix $ d , p , \lambda $, the least constant $ C $ such that
\begin{align}
K
\leq C M_{ Q_d ( p ) }^\lambda \left( \frac{ \| g_1 \|_1 }{ \| f_1 \|_1 } , \dots , \frac{ \| g_m \|_1 }{ \| f_m \|_1 } \right) \label{eq:product best}
\end{align}
holds for any family of functions is $ C = 1 $.
We call $ C $ the multiplicative constant (Section~\ref{subsec:equality product constant best}).
Furthermore, when $ m \geq 2 $, the power $ Q_d ( p ) $ cannot be replaced by any smaller power while keeping the multiplicative constant equal to $ 1 $ (Section~\ref{subsec:equality power best}).

A change of variables also yields an inequality in which the evaluation point $ z_\lambda ( x ) $ in Theorem~\ref{thm:main-BBL} is replaced by a general linear combination $ z_\beta ( x ) $ (Corollary~\ref{cor:general-linear-coefficients}).

We now describe the strategy of the proof of Theorem~\ref{thm:main-BBL}.
Since the reduction to Theorem~\ref{thm:multi input BBL} described above is unavailable when $ 1 / ( d + 1 ) < p \leq 1 / d $, we prove the theorem independently of that reduction.
The proof consists of two stages: the one-dimensional case and the higher-dimensional case.

In dimension one, we first prove a minimum-type integral inequality for multiple functions (Lemma~\ref{lem:multifunction-minimum-integral}) by iterating the two-function minimum-type integral inequality of Brascamp--Lieb (Fact~\ref{fact:Brascamp--Lieb}), and then combine it with inequalities for weighted power means.
In higher dimensions, we apply lower-dimensional results successively to slice integrals and use the composition formula $ Q_e ( Q_d ( p ) ) = Q_{ d + e } ( p ) $ for the power transform (Lemma~\ref{lem:Q_d compose}).

The paper is organized as follows.
Section~\ref{sec:weighted mean prepare} collects properties of weighted power means needed later.
Section~\ref{sec:original BBL} explains the relation between Theorem~\ref{thm:main-BBL} and previous results.
In Section~\ref{sec:one dimension}, we derive the minimum-type integral inequality for multiple functions (Lemma~\ref{lem:multifunction-minimum-integral}) from the two-function minimum-type integral inequality of Brascamp--Lieb (Fact~\ref{fact:Brascamp--Lieb}) and use it to prove Theorem~\ref{thm:main-BBL} in dimension one.
In Section~\ref{sec:high dimension}, we use slice integrals and induction on the dimension to deduce the general case from the one-dimensional case.
In Section~\ref{sec:equality}, we construct families of functions for which equality in Theorem~\ref{thm:main-BBL} holds in the case of proportional outputs.
These equality examples show that the multiplicative constant $ C = 1 $ in Theorem~\ref{thm:main-BBL} is optimal and that the power $ Q_d ( p ) $ cannot be replaced by a smaller one (Proposition~\ref{prop:power best}).
Finally, Section~\ref{sec:general-linear-coefficients} derives an inequality in which the evaluation point in Theorem~\ref{thm:main-BBL} is replaced by a general linear combination (Corollary~\ref{cor:general-linear-coefficients}), and explains that the same optimality statements for the multiplicative constant and power remain valid.

\section{Weighted power means and preliminaries}
\label{sec:weighted mean prepare}

In this section, we collect basic properties of the weighted power mean $ M_p^\lambda $ that will be used later.
In Section~\ref{subsec:weighted mean prepare basic formulas}, we collect as basic properties the results stated by Hardy--Littlewood--Pólya \cite{MR46395}, and we further prove joint continuity with respect to the power and the vector (Lemma~\ref{lem:weighted power mean continuous}).
In Section~\ref{subsec:weighted mean prepare power transform}, we prove continuity of the dimension-dependent power transform $ Q_d $ and its composition formula.
In Section~\ref{subsec:weighted mean prepare Holder inequality}, we prove the H\"older inequality relating the powers $ p $, $ 1 / d $, and $ Q_d ( p ) $.
Finally, in Section~\ref{subsec:weighted mean prepare integral inequality}, we derive the reverse Minkowski integral inequality on Euclidean space.

\subsection{Basic properties of weighted power means}
\label{subsec:weighted mean prepare basic formulas}

In this subsection, we review basic properties of the weighted power mean $ M_p^\lambda $ stated by Hardy--Littlewood--Pólya \cite{MR46395}.
For $ a = ( a_1 , a_2 , \dots , a_m ) $, we henceforth simply write $ M_p^\lambda ( a ) $ for $ M_p^\lambda ( a_1 , a_2 , \dots , a_m ) $.

\begin{fact}[Basic properties of weighted power means]
    \label{fact:weighted power mean properties}

    Let $ \lambda = ( \lambda_1 , \lambda_2 , \dots , \lambda_m ) $ be a weight, and let
\begin{align}
    - \infty \leq p \leq \infty, & &
    a = ( a_1 , a_2 , \dots , a_m ) \in \mathbb{ R }_{ \geq 0 }^m.
\end{align}

\begin{enumerate}
    \item \label{item:weighted power mean properties inverse}
    (Inverse formula \cite[(2.2.9) and Section 2.3]{MR46395})
    If $ a_i > 0 $ for every $ i = 1 , 2 , \dots , m $, then
    \begin{align}
        M_p^\lambda \left( \frac{ 1 }{ a_1 } , \frac{ 1 }{ a_2 } , \dots , \frac{ 1 }{ a_m } \right)
        = \frac{ 1 }{ M_{ - p }^\lambda ( a ) }.
    \end{align}
    
    \item \label{item:weighted power mean properties homogeneity}
    (Positive homogeneity of degree one \cite[(2.2.13)]{MR46395})
    For any $ t \geq 0 $, one has
    \begin{align}
        M_p^\lambda ( t a ) = t M_p^\lambda ( a ).
    \end{align}
    
    \item \label{item:weighted power mean properties element monotone}
    (Componentwise monotonicity \cite[(2.2.15)]{MR46395})
    For any $ b = ( b_1 , b_2 , \dots , b_m ) \in \mathbb{ R }_{ \geq 0 }^m $ satisfying $ a_i \leq b_i $ for every $ i = 1 , 2 , \dots , m $, one has
    \begin{align}
        M_p^\lambda ( a ) \leq M_p^\lambda ( b ).
    \end{align}
    
    \item \label{item:weighted power mean properties power monotone main}
    (Monotonicity in the power \cite[Theorem 16]{MR46395})
    For any $ p \leq q \leq \infty $, one has
    \begin{align}
        M_p^\lambda ( a ) \leq M_q^\lambda ( a ).
    \end{align}
    
    \item \label{item:weighted power mean properties power monotone strict}
    (Monotonicity in the power \cite[Theorem 16]{MR46395})
    Suppose $ a $ is not a constant vector and that $ a_i > 0 $ for every $ i = 1 , 2 , \dots , m $.
    Then for any $ p < q \leq \infty $, one has
    \begin{align}
        M_p^\lambda ( a ) < M_q^\lambda ( a ).
    \end{align}
    
\end{enumerate}

\end{fact}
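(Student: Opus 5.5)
We check each item directly from Definition~\ref{def:weighted power mean definition}, splitting into the cases $p=0$, $p=\pm\infty$, $p$ finite and nonzero, and the degenerate case $\prod_i a_i=0$ with $p\le 0$. Items \ref{item:weighted power mean properties inverse}--\ref{item:weighted power mean properties element monotone} are elementary; the real content is the comparison in the power, which we reduce to Jensen's inequality.

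For the inverse formula \ref{item:weighted power mean properties inverse}, all $a_i>0$, so no degenerate case occurs. For finite $p\neq 0$,
\begin{align}
\Bigl(\sum_i\lambda_i a_i^{-p}\Bigr)^{1/p}=\Bigl(\bigl(\sum_i\lambda_i a_i^{-p}\bigr)^{-1/p}\Bigr)^{-1}=\frac{1}{M_{-p}^\lambda(a)}.
\end{align}
For $p=0$ we use $\prod_i a_i^{-\lambda_i}=(\prod_i a_i^{\lambda_i})^{-1}$. For $p=\pm\infty$ we use that $x\mapsto 1/x$ exchanges $\min$ and $\max$.

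For homogeneity \ref{item:weighted power mean properties homogeneity}, the case $t=0$ gives $0$ on both sides. This holds by the degenerate clause for $p\le 0$, and directly for $p>0$ and $p=\infty$. For $t>0$ the factor $t$ comes out of every formula, because $\sum_i\lambda_i=1$ gives $\prod_i t^{\lambda_i}=t$. The degenerate clause is preserved since $\prod_i(ta_i)=0$ if and only if $\prod_i a_i=0$.

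For componentwise monotonicity \ref{item:weighted power mean properties element monotone}, first let $p>0$ be finite. Then $x\mapsto x^p$ and $y\mapsto y^{1/p}$ are both increasing, which gives the claim. Now let $p<0$. If some $a_i=0$, the left side is $0$ and there is nothing to prove. Otherwise all $b_i\ge a_i>0$. Here $x\mapsto x^p$ is decreasing, so the sum decreases, and $y\mapsto y^{1/p}$ is also decreasing, so the composite is increasing. The cases $p=0,\pm\infty$ are immediate.

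For the comparison in the power, items \ref{item:weighted power mean properties power monotone main} and \ref{item:weighted power mean properties power monotone strict}, we proceed as follows.

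First we handle the endpoints. We check $\min_i a_i\le M_p^\lambda(a)\le\max_i a_i$ using item \ref{item:weighted power mean properties element monotone} applied to constant vectors, together with the fact that $M_p^\lambda(c,\dots,c)=c$. If $a$ is positive and nonconstant, some coordinate is strictly below the maximum, and then the inequality $M_p^\lambda(a)<\max_i a_i$ is strict for finite $p$. The analogous strictness at the minimum also holds.

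Next take $0<p<q<\infty$. We apply Jensen's inequality to the convex function $\phi(y)=y^{q/p}$ at the points $y_i=a_i^p$, which are allowed to vanish. This gives $(\sum_i\lambda_i y_i)^{q/p}\le\sum_i\lambda_i y_i^{q/p}$, and hence $M_p^\lambda(a)\le M_q^\lambda(a)$. If the $y_i$ are not all equal, Jensen is strict by strict convexity.

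Now take $p=0<q$. Concavity of $\log$ gives the weighted AM--GM inequality, which is strict for nonconstant positive $a$. Applied to the values $a_i^q$, it gives $M_0^\lambda(a)\le M_q^\lambda(a)$. If some $a_i=0$, the left side is $0$ and the inequality is trivial.

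The cases with $q\le 0$ are reduced to the previous ones. If some $a_i=0$, then $M_p^\lambda(a)=0$ and nothing needs proving. If all $a_i>0$, we apply the inverse formula \ref{item:weighted power mean properties inverse} to $1/a$ with the powers $-q\le -p$, which reduces to the case already proved. When $p<0<q$, we pass through $p<0<q$ using $M_p\le M_0\le M_q$.

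The main obstacle is bookkeeping rather than an analytic difficulty. The degenerate clause and the zero entries must be treated consistently in every case, and strictness must be tracked only under the hypotheses of item \ref{item:weighted power mean properties power monotone strict}, namely positive and nonconstant $a$. Under those hypotheses each step used above is strict.
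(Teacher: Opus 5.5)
Your argument is correct. The only difference from the paper is that the paper gives no argument at all: each item of the Fact is quoted from Hardy--Littlewood--P\'olya ((2.2.9), (2.2.13), (2.2.15) and Theorem~16 there).

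You instead verify everything directly from Definition~\ref{def:weighted power mean definition}. Items (1)--(3) follow by case analysis. The comparison in the power uses four ingredients:
\begin{enumerate}
\item trapping every mean between $\min_i a_i$ and $\max_i a_i$, which disposes of $p=-\infty$ and $q=\infty$;
\item Jensen for $y\mapsto y^{q/p}$ when $0<p<q<\infty$;
\item weighted AM--GM when $p=0<q$;
\item the inverse formula, which moves $p\le q\le 0$ with all $a_i>0$ to the powers $0\le -q\le -p$, plus the chain through the power $0$ when $p<0<q$.
\end{enumerate}
This split is exhaustive. Strictness propagates correctly under the hypotheses of item (5), since $1/a$ is again positive and nonconstant.

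Your version buys an explicit check that the paper's convention for zero entries (value $0$ for $p\le 0$ whenever $\prod_i a_i=0$) is compatible with every item. The bare citation leaves the reader to reconcile that convention with HLP's own. The citation buys brevity.

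Two small repairs are needed.
\begin{itemize}
\item The strict bound $M_p^\lambda(a)<\max_i a_i$ for finite $p$, and its analogue at the minimum, does not follow from item (3). It follows from strict monotonicity of $x\mapsto x^p$ and of $y\mapsto y^{1/p}$ on $(0,\infty)$, and you should say so.
\item ``We pass through $p<0<q$'' should say that you pass through the power $0$, i.e.\ $M_p^\lambda(a)\le M_0^\lambda(a)\le M_q^\lambda(a)$.
\end{itemize}
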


Fact~\ref{fact:weighted power mean properties} yields the following example, which will be used in Section~\ref{subsec:original BBL same output}.

\begin{example}
    \label{ex:homogeneity-inverse-combine}

    By Fact~\ref{fact:weighted power mean properties} \ref{item:weighted power mean properties inverse} and \ref{item:weighted power mean properties homogeneity}, for any weight $ \lambda = ( \lambda_1 , \lambda_2 , \dots , \lambda_m ) $ and
    \begin{align}
        - \infty \leq p \leq \infty, & &
        t \geq 0 , & &
        a = ( a_1 , a_2 , \dots , a_m ) \in \mathbb{ R }_{ > 0 }^m,
    \end{align}
    one has
    \begin{align}
        M_p^\lambda \left( \frac{ t }{ a_1 } , \frac{ t }{ a_2 } , \dots , \frac{ t }{ a_m } \right)
        = t M_p^\lambda \left( \frac{ 1 }{ a_1 } , \frac{ 1 }{ a_2 } , \dots , \frac{ 1 }{ a_m } \right)
        = \frac{ t }{ M_{ - p }^\lambda ( a ) }.
    \end{align}
    
\end{example}

We equip the extended real interval $ [ - \infty , \infty ] $ with the usual order topology.
The weighted power mean $ M_p^\lambda ( a ) $ has the following joint continuity with respect to $ p $ and $ a $.

\begin{lemma}[Joint continuity]
   \label{lem:weighted power mean continuous}

   For any weight $ \lambda = ( \lambda_1 , \lambda_2 , \dots , \lambda_m ) $, the map
   \begin{align}
       [ - \infty , \infty ] \times \mathbb{ R }_{ \geq 0 }^m \to \mathbb{ R }_{ \geq 0 }, & & ( p , a ) \mapsto M_p^\lambda ( a )
   \end{align}
   is continuous.
   
\end{lemma}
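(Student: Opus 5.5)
The plan is a monotonicity sandwich. Fact~\ref{fact:weighted power mean properties}~\ref{item:weighted power mean properties power monotone main} handles the delicate points $p_0\in\{-\infty,0,\infty\}$. At the remaining points we use direct estimates from the explicit formula. Fix $(p_0,a)\in[-\infty,\infty]\times\mathbb{R}_{\geq 0}^m$ and a sequence $(p_n,a^{(n)})\to(p_0,a)$.

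\emph{Step 1: finite $p_0\neq 0$.} If $p_0>0$, then for $p$ near $p_0$ the mean is $\bigl(\sum_i\lambda_i a_i^p\bigr)^{1/p}$. Since $(p,t)\mapsto t^p$ is jointly continuous on $(0,\infty)\times\mathbb{R}_{\geq 0}$, continuity follows, even when some $a_i=0$. Now let $p_0<0$. If all $a_i>0$, the same formula works near $(p_0,a)$. If some $a_j=0$, then $M_{p_0}^\lambda(a)=0$, and we must show $M_{p_n}^\lambda(a^{(n)})\to 0$. When $\prod_i a^{(n)}_i=0$ the value is $0$ anyway. Otherwise, using $\sum_i\lambda_i (a^{(n)}_i)^{p_n}\geq \lambda_j (a^{(n)}_j)^{p_n}$ and $1/p_n<0$, we get
\begin{align}
0\leq M_{p_n}^\lambda(a^{(n)})\leq \lambda_j^{1/p_n}a^{(n)}_j\to \lambda_j^{1/p_0}\cdot 0=0 .
\end{align}

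\emph{Step 2: pointwise limits in $p$ for fixed $a$.} For fixed $a$ we need $M_q^\lambda(a)\to M_{q_0}^\lambda(a)$ as $q\to q_0\in\{-\infty,0,\infty\}$. These are the classical limits from Hardy--Littlewood--P\'olya, and I would verify them quickly.
\begin{itemize}
\item[] For $q\to\infty$: $\lambda_k^{1/q}\max_i a_i\leq M_q^\lambda(a)\leq\max_i a_i$ with $k$ an index attaining the maximum.
\item[] For $q\to-\infty$: the analogous bound for $\min_i a_i$. If some $a_i=0$ the value is already $0$ for $q\leq 0$.
\item[] For $q\to 0$ with all $a_i>0$: L'H\^opital applied to $\frac1q\log\sum_i\lambda_i a_i^q$.
\item[] For $q\to0^+$ with some $a_j=0$: bound $M_q^\lambda(a)\leq \bigl(\lambda_j\varepsilon^q+(1-\lambda_j)B^q\bigr)^{1/q}$ with $B=\max_i a_i$ and any $\varepsilon>0$ (by componentwise monotonicity, Fact~\ref{fact:weighted power mean properties}~\ref{item:weighted power mean properties element monotone}). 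The right side tends to $\varepsilon^{\lambda_j}B^{1-\lambda_j}$ by the positive case. Then let $\varepsilon\to0$.
\end{itemize}

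\emph{Step 3: the sandwich at $p_0\in\{-\infty,0,\infty\}$.} Suppose $p_0=0$. Choose $q_-<0<q_+$ with $q_\pm$ finite. For large $n$ we have $q_-\leq p_n\leq q_+$, so by monotonicity in the power
\begin{align}
M_{q_-}^\lambda(a^{(n)})\leq M_{p_n}^\lambda(a^{(n)})\leq M_{q_+}^\lambda(a^{(n)}).
\end{align}
By Step~1, with the power fixed, the outer terms converge to $M_{q_\pm}^\lambda(a)$. Hence
\begin{align}
M_{q_-}^\lambda(a)\leq\liminf_n M_{p_n}^\lambda(a^{(n)})\leq\limsup_n M_{p_n}^\lambda(a^{(n)})\leq M_{q_+}^\lambda(a).
\end{align}
Letting $q_\pm\to0$ and applying Step~2 gives the limit $M_0^\lambda(a)$.

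For $p_0=\infty$, the upper bound $M_{p_n}^\lambda(a^{(n)})\leq\max_i a^{(n)}_i$ is continuous in $a$. The lower bound $M_{q}^\lambda(a^{(n)})$ for fixed finite $q$ is handled as above, and then we let $q\to\infty$. The case $p_0=-\infty$ is symmetric, using $\min_i a^{(n)}_i$.

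The main obstacle is the non-uniformity at $p_0=0$ when some $a_j=0$. There a direct estimate fails, because $\varepsilon^{p}$ need not tend to $0$ when $\varepsilon\to0$ and $p\to0^+$ simultaneously. The sandwich in Step~3 avoids this by first fixing the power, which is why I would route every degenerate case through it.
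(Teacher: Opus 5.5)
Your proposal is correct and follows essentially the same route as the paper. Both arguments combine continuity in $a$ at fixed power (with the same estimate $M_{p}^\lambda(a^{(n)})\leq\lambda_j^{1/p}a^{(n)}_j$ for negative powers with a vanishing component), a sandwich via monotonicity in the power, and the classical limits in $p$ for fixed $a$. The differences are minor: the paper applies the sandwich $M_{p-\delta}^\lambda\leq M_{p_\nu}^\lambda\leq M_{p+\delta}^\lambda$ at every finite $p$ and cites Hardy--Littlewood--P\'olya for the limits in $p$, whereas you treat finite $p_0\neq 0$ by direct joint continuity and verify the limits at $0,\pm\infty$ by hand.
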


\begin{proof}

First, let $ \nu = 1 , 2 , \dots $ and suppose that
\begin{align}
    \lim_{ \nu \to \infty } a^{ ( \nu ) } = a.
\end{align}
We show that
\begin{align}
    \lim_{ \nu \to \infty } M_p^\lambda ( a^{ ( \nu ) } )
    = M_p^\lambda ( a ) \label{eq:weighted power mean continuous proof a-continuous}
\end{align}
for any $ - \infty \leq p \leq \infty $.
If $ - \infty < p < 0 $ and there exists $ \ell = 1 , 2 , \dots , m $ such that $ a_\ell = 0 $, then
\begin{align}
    0 \leq M_p^\lambda ( a^{ ( \nu ) } )
    \leq \lambda_\ell^{ 1 / p } a_\ell^{ ( \nu ) }
    \to 0.
\end{align}
Thus, we have \eqref{eq:weighted power mean continuous proof a-continuous}.
In all other cases, we get \eqref{eq:weighted power mean continuous proof a-continuous} by the definition.

It remains to show that if
\begin{align}
    \lim_{ \nu \to \infty } p_\nu = p , & &
    \lim_{ \nu \to \infty } a^{ ( \nu ) } = a,
\end{align}
then
\begin{align}
    \lim_{ \nu \to \infty } M_{ p_\nu }^\lambda ( a^{ ( \nu ) } )
    = M_p^\lambda ( a ). \label{eq:weighted power mean continuous proof claim}
\end{align}

First suppose that $ - \infty < p < \infty $.
For any $ \delta > 0 $, we have $ p - \delta < p_\nu < p + \delta $ for all sufficiently large $ \nu $.
Thus, Fact~\ref{fact:weighted power mean properties} \ref{item:weighted power mean properties power monotone main} gives
\begin{align}
    M_{ p - \delta }^\lambda ( a^{ ( \nu ) } )
    \leq M_{ p_\nu }^\lambda ( a^{ ( \nu ) } )
    \leq M_{ p + \delta }^\lambda ( a^{ ( \nu ) } ).
\end{align}
By letting $ \nu \to \infty $ and using \eqref{eq:weighted power mean continuous proof a-continuous}, we obtain
\begin{align}
    M_{ p - \delta }^\lambda ( a )
    \leq \liminf_{ \nu \to \infty }
    M_{ p_\nu }^\lambda ( a^{ ( \nu ) } )
    \leq \limsup_{ \nu \to \infty }
    M_{ p_\nu }^\lambda ( a^{ ( \nu ) } )
    \leq M_{ p + \delta }^\lambda ( a ).
\end{align}
By Hardy--Littlewood--Pólya \cite[Section 2.3, Theorem 3]{MR46395}, we have
\begin{align}
    \lim_{ \delta \downarrow 0 } M_{ p + \delta }^\lambda ( a )
    = \lim_{ \delta \downarrow 0 } M_{ p - \delta }^\lambda ( a )
    = M_p^\lambda ( a ).
\end{align}
Thus, \eqref{eq:weighted power mean continuous proof claim} follows.

Suppose next that $ p = \infty $.
For any $ J > 0 $, we have $ p_\nu > J $ for all sufficiently large $ \nu $.
Thus, we get
\begin{align}
    M_J^\lambda ( a^{ ( \nu ) } )
    \leq M_{ p_\nu }^\lambda ( a^{ ( \nu ) } )
    \leq M_\infty^\lambda ( a^{ ( \nu ) } ).
\end{align}
We have
\begin{align}
    M_J^\lambda ( a )
    \leq \liminf_{ \nu \to \infty } M_{ p_\nu }^\lambda ( a^{ ( \nu ) } )
    \leq \limsup_{ \nu \to \infty } M_{ p_\nu }^\lambda ( a^{ ( \nu ) } )
    \leq M_\infty^\lambda ( a )
\end{align}
by letting $ \nu \to \infty $ and using \eqref{eq:weighted power mean continuous proof a-continuous}, and hence
\begin{align}
    \lim_{ J \to \infty } M_J^\lambda ( a )
    = M_\infty^\lambda ( a )
\end{align}
holds by Hardy--Littlewood--Pólya \cite[Section 2.3, Theorem 4]{MR46395}.
Thus, we obtain \eqref{eq:weighted power mean continuous proof claim} by letting $ J \to \infty $.

In the case of $ p = - \infty $, we have
\begin{align}
    M_{ - \infty }^\lambda ( a^{ ( \nu ) } )
    \leq M_{ p_\nu }^\lambda ( a^{ ( \nu ) } )
    \leq M_{ - J }^\lambda ( a^{ ( \nu ) } ).
\end{align}
Thus, we obtain \eqref{eq:weighted power mean continuous proof claim} by a similar argument.
\end{proof}

\subsection{Dimension-dependent power transform}
\label{subsec:weighted mean prepare power transform}

In this subsection, we study properties of the power transform $ Q_d ( p ) $ depending on the dimension $ d $.
Although Theorem~\ref{thm:main-BBL} uses only $ d = 1 , 2 , \dots $, the power transform $ Q_d ( p ) $ can be defined by \eqref{eq:power transform} for any real number $ d > 0 $.

\begin{lemma}[Continuity of the power transform]
    \label{lem:Q_d continuous}

    For any $ d > 0 $, the map $ Q_d \colon [ - \infty , 1 / d ] \to [ - \infty , \infty ] $ is continuous.
\end{lemma}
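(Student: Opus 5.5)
We prove continuity of $Q_d$ by splitting the domain $[-\infty, 1/d]$ into its interior points and its two endpoints, using the order topology on the extended real line. Sequential continuity suffices because both spaces are first countable.

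\emph{Interior points.} On the open interval $(-\infty, 1/d)$ we have $Q_d(p) = p/(1-dp)$. This is a quotient of continuous real functions whose denominator $1-dp$ is strictly positive there, so $Q_d$ is continuous, with values in $\mathbb{R}$. It is also strictly increasing, since its derivative is $1/(1-dp)^2 > 0$. Monotonicity makes the endpoint arguments easy, because it lets us compare with one-sided limits.

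\emph{The endpoint $p = -\infty$.} Writing $Q_d(p) = -\frac{1}{d} + \frac{1}{d(1-dp)}$, we see that $Q_d(p) \to -1/d$ as $p \to -\infty$, and the convergence is from above. So for a sequence $p_\nu \to -\infty$ in $[-\infty, 1/d]$:
\begin{itemize}
\item the terms with $p_\nu = -\infty$ have $Q_d(p_\nu) = -1/d$ exactly;
\item the remaining terms are eventually real and tend to $-\infty$, so their values tend to $-1/d$ by the formula above.
\end{itemize}
Hence $Q_d(p_\nu) \to -1/d = Q_d(-\infty)$.

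\emph{The endpoint $p = 1/d$.} The neighbourhoods of $\infty$ in $[-\infty, \infty]$ are the sets $(J, \infty]$. Let $p_\nu \to 1/d$ with $p_\nu \le 1/d$.
\begin{itemize}
\item Terms with $p_\nu = 1/d$ map to $\infty$.
\item For the other terms, $1 - dp_\nu \downarrow 0^+$ while the numerator $p_\nu \to 1/d > 0$. Hence $Q_d(p_\nu) \to +\infty$; explicitly, $Q_d(p_\nu) \ge \frac{1}{2d(1-dp_\nu)}$ once $p_\nu > 1/(2d)$.
\end{itemize}
So $Q_d(p_\nu) \to \infty = Q_d(1/d)$.

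No step is a real obstacle. The only care needed is in handling neighbourhoods of the infinite points correctly in the extended order topology, including sequences that mix finite terms with terms equal to an endpoint. This is resolved by treating such terms separately, as above.
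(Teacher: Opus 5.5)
Your proposal is correct and follows essentially the same route as the paper: continuity on $(-\infty,1/d)$ from the formula, then checking the limits $-1/d$ and $\infty$ at the two endpoints. Your handling of sequences that take endpoint values is more explicit than the paper's, and the monotonicity remark is harmless but not needed.
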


\begin{proof}

For $ - \infty < p < 1 / d $, this follows from the definition.
Moreover, we have
\begin{align}
    \lim_{ p \to - \infty } Q_d ( p )
    & = \lim_{ p \to - \infty } \frac{ p }{ 1 - d p }
    = - \frac{ 1 }{ d }
    = Q_d ( - \infty ), \\
    \lim_{ p \uparrow 1 / d } Q_d ( p )
    & = \lim_{ p \uparrow 1 / d } \frac{ p }{ 1 - d p }
    = \infty
    = Q_d \left( \frac{ 1 }{ d } \right).
\end{align}
Thus, $ Q_d $ is continuous.
\end{proof}

The power transform $ Q_d $ satisfies the following equivalence.

\begin{lemma}
    \label{lem:applicability}

    For any
    \begin{align}
        d , e > 0, & &
        - \infty \leq p \leq \frac{ 1 }{ d },
    \end{align}
    one has
    \begin{align}
        Q_d ( p ) \leq \frac{ 1 }{ e }
        \Longleftrightarrow
        p \leq \frac{ 1 }{ d + e }.
    \end{align}
    
\end{lemma}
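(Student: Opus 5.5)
The plan is a direct case analysis on $p$ according to the definition of $Q_d$ in \eqref{eq:power transform}, using only elementary manipulation of inequalities. Throughout, $d,e>0$ and $-\infty \le p \le 1/d$. Note first that $1/(d+e) < 1/d$, so the right-hand condition $p \le 1/(d+e)$ is meaningful within the domain of $Q_d$.

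First, the endpoint cases. If $p = -\infty$, then $Q_d(p) = -1/d < 0 < 1/e$, so the left side holds; the right side $-\infty \le 1/(d+e)$ also holds. If $p = 1/d$, then $Q_d(p) = \infty > 1/e$, so the left side fails; the right side fails too, since $1/d > 1/(d+e)$. In both endpoint cases the equivalence is therefore true.

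Next, the main case $-\infty < p < 1/d$. Here $1 - dp > 0$, so $Q_d(p) = p/(1-dp)$ is a finite real number. Multiplying by the positive quantity $e(1-dp)$ preserves the inequality, which gives
\begin{align}
    \frac{ p }{ 1 - d p } \leq \frac{ 1 }{ e }
    \Longleftrightarrow e p \leq 1 - d p
    \Longleftrightarrow ( d + e ) p \leq 1
    \Longleftrightarrow p \leq \frac{ 1 }{ d + e },
\end{align}
where the last step uses $d+e>0$. This settles the main case.

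There is no real obstacle. The only point requiring care is the sign of $1 - dp$, which is positive exactly because $p < 1/d$ in the finite case; this is why the two endpoints are split off and treated separately. Alternatively, one could argue that $Q_d$ is strictly increasing and continuous (Lemma~\ref{lem:Q_d continuous}) and check that $Q_d(1/(d+e)) = 1/e$, but the direct computation above is simpler.
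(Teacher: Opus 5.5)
Your proposal is correct and follows essentially the same route as the paper: the paper likewise splits off the endpoints $p=-\infty$ (both sides hold) and $p=1/d$ (neither side holds). It then uses $1-dp>0$ in the finite case to run the same chain of equivalences $p/(1-dp)\le 1/e \Leftrightarrow ep\le 1-dp \Leftrightarrow (d+e)p\le 1$.
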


\begin{proof}

Both sides hold for $ p = - \infty $, whereas neither side holds for $ p = 1 / d $.
If $ - \infty < p < 1 / d $, then $ 1 - d p > 0 $, and hence
\begin{align}
    Q_d ( p ) \leq \frac{ 1 }{ e }
    \Longleftrightarrow \frac{ p }{ 1 - d p } \leq \frac{ 1 }{ e }
    \Longleftrightarrow e p \leq 1 - d p
    \Longleftrightarrow ( d + e ) p \leq 1
    \Longleftrightarrow p \leq \frac{ 1 }{ d + e }
\end{align}
holds.
\end{proof}

The power transforms also satisfy the following composition formula.

\begin{lemma}[Composition formula for the power transform]
    \label{lem:Q_d compose}

    For any real numbers $ d , e > 0 $ and any $ - \infty \leq p \leq 1 / ( d + e ) $, the quantity $ Q_e ( Q_d ( p ) ) $ is well-defined, that is, $ p \leq 1 / d $ and $ Q_d ( p ) \leq 1 / e $, and
    \begin{align}
        Q_e ( Q_d ( p ) ) = Q_{ d + e } ( p ). \label{eq:Q_d compose claim}
    \end{align}
    
\end{lemma}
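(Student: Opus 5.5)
The plan is to first establish well-definedness and then verify \eqref{eq:Q_d compose claim} by splitting into the endpoint cases and the generic finite case.

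\emph{Well-definedness.} Since $ d , e > 0 $, we have $ 1 / ( d + e ) < 1 / d $. Hence $ p \leq 1 / ( d + e ) $ gives $ p \leq 1 / d $, so $ Q_d ( p ) $ is defined. Moreover, Lemma~\ref{lem:applicability} with the pair $ ( d , e ) $ gives exactly
\begin{align}
    Q_d ( p ) \leq \frac{ 1 }{ e } \Longleftrightarrow p \leq \frac{ 1 }{ d + e }.
\end{align}
So $ Q_d ( p ) $ lies in the domain $ [ - \infty , 1 / e ] $ of $ Q_e $, and $ Q_e ( Q_d ( p ) ) $ is well-defined.

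\emph{Case $ p = - \infty $.} Here $ Q_d ( - \infty ) = - 1 / d $, which is finite and strictly less than $ 1 / e $. Then
\begin{align}
    Q_e \left( - \frac{ 1 }{ d } \right)
    = \frac{ - 1 / d }{ 1 + e / d }
    = - \frac{ 1 }{ d + e }
    = Q_{ d + e } ( - \infty ).
\end{align}

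\emph{Case $ p = 1 / ( d + e ) $.} Here $ 1 - d p = e / ( d + e ) $, so $ Q_d ( p ) = 1 / e $. Therefore $ Q_e ( Q_d ( p ) ) = \infty = Q_{ d + e } ( 1 / ( d + e ) )$.

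\emph{Generic case $ - \infty < p < 1 / ( d + e ) $.} Put $ q = p / ( 1 - d p ) $, which is finite with $ q < 1 / e $ by Lemma~\ref{lem:applicability} and its strict analogue. Then
\begin{align}
    1 - e q = \frac{ 1 - ( d + e ) p }{ 1 - d p },
\end{align}
and both numerator and denominator are positive. Hence
\begin{align}
    Q_e ( q ) = \frac{ q }{ 1 - e q } = \frac{ p }{ 1 - ( d + e ) p } = Q_{ d + e } ( p ).
\end{align}

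The only delicate point is the strict inequality $ q < 1 / e $, which ensures we use the finite formula for $ Q_e $. It follows from the same chain of equivalences as in the proof of Lemma~\ref{lem:applicability}, with $ \leq $ replaced by $ < $, using $ 1 - d p > 0 $. Alternatively, one could prove only the generic case and obtain the endpoint cases from continuity of $ Q_d $, $ Q_e $, $ Q_{ d + e } $ (Lemma~\ref{lem:Q_d continuous}) by taking limits. The direct computation is simpler, so I would use it.
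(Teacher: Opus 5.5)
Your proof is correct and follows the paper's argument. Well-definedness comes from $1/(d+e) < 1/d$ together with Lemma~\ref{lem:applicability}, and the generic case $-\infty < p < 1/(d+e)$ uses the same algebraic identity $Q_e\bigl(p/(1-dp)\bigr) = p/(1-(d+e)p)$. The only difference is at the endpoints $p = -\infty$ and $p = 1/(d+e)$, which you compute directly while the paper obtains them by continuity (Lemma~\ref{lem:Q_d continuous}); your computation is equally valid and slightly more self-contained.
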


\begin{proof}

The inequalities $ p \leq 1 / d $ and $ Q_d ( p ) \leq 1 / e $ follow from
\begin{align}
    p \leq \frac{ 1 }{ d + e }
    < \frac{ 1 }{ d }
\end{align}
and Lemma~\ref{lem:applicability}.
Thus, the quantity $ Q_e ( Q_d ( p ) ) $ is well-defined.

If $ - \infty < p < 1 / ( d + e ) $, then
\begin{align}
    Q_e ( Q_d ( p ) )
    = Q_e \left( \frac{ p }{ 1 - d p } \right)
    = \frac{ p / ( 1 - d p ) }{ 1 - e p / ( 1 - d p ) }
    = \frac{ p }{ 1 - ( d + e ) p }
    = Q_{ d + e } ( p ).
\end{align}
By Lemma~\ref{lem:Q_d continuous}, the equality \eqref{eq:Q_d compose claim} also holds for $ p = - \infty $ and $ p = 1 / ( d + e ) $.
\end{proof}

Lemma~\ref{lem:Q_d compose} will be used in the proof of the dimension-additivity lemma (Lemma~\ref{lem:dimension-additivity}) in Section~\ref{sec:high dimension} to combine the powers that arise there.

\subsection{H\"older inequality for weighted power means}
\label{subsec:weighted mean prepare Holder inequality}

In this subsection, we prove the H\"older inequality for $ M_p^\lambda $ in the range $ - \infty \leq p \leq 1 / d $.
Since $ Q_d ( p ) $ is well-defined in this range, we have the following inequality.

\begin{lemma}[H\"older inequality]
    \label{lem:Holder}

    For any weight $ \lambda = ( \lambda_1 , \lambda_2 , \dots , \lambda_m ) $ and
    \begin{align}
        d > 0, & &
        - \infty \leq p \leq \frac{ 1 }{ d }, & &
        a = ( a_1 , a_2 , \dots , a_m ) \in \mathbb{ R }_{ \geq 0 }^m , & &
        b = ( b_1 , b_2 , \dots , b_m ) \in \mathbb{ R }_{ \geq 0 }^m,
    \end{align}
    one has
    \begin{align}
        M_p^\lambda ( a_1 b_1 , a_2 b_2 , \dots , a_m b_m )
        \leq M_{ 1 / d }^\lambda ( a ) M_{ Q_d ( p ) }^\lambda ( b ). \label{eq:Holder claim}
    \end{align}
    
\end{lemma}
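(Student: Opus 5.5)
The natural route is the classical H\"older inequality for weighted sums, applied with exponents built from $p$, $1/d$ and $q \coloneqq Q_d ( p )$. For $ 0 < p < 1/d $ we have $ q = p / ( 1 - d p ) > 0 $ and
\begin{align}
    \frac{ 1 }{ p } = d + \frac{ 1 }{ q }, \qquad \text{that is,} \qquad 1 = \frac{ d p }{ 1 } + \frac{ p }{ q }.
\end{align}
So the conjugate exponents $ r = 1 / ( d p ) $ and $ s = q / p $ satisfy $ 1/r + 1/s = 1 $ with $ r , s > 1 $. Applying H\"older to $ \sum_i \lambda_i a_i^p b_i^p $ gives
\begin{align}
    \sum_i \lambda_i a_i^p b_i^p \leq \Bigl( \sum_i \lambda_i a_i^{ 1 / d } \Bigr)^{ d p } \Bigl( \sum_i \lambda_i b_i^{ q } \Bigr)^{ p / q },
\end{align}
and raising both sides to $ 1 / p > 0 $ yields \eqref{eq:Holder claim}.

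For $ -\infty < p < 0 $ we have $ -1/d < q < 0 $. If some $ a_i b_i = 0 $, the left side is $0$ and the claim is trivial. Otherwise all entries are positive, and we use the same identity $ 1/p = d + 1/q $ in reverse form. Writing $ b_i = ( a_i b_i ) \cdot a_i^{ -1 } $, the plan is to apply the standard H\"older inequality to $ M_q^\lambda ( b ) $ with the positive powers $ -q $, $ -p $ and $ 1/d $. Concretely, $ -q $ is split through $ 1/(-q) = 1/(-p) + d $, so
\begin{align}
    M_{ -\, ( - q ) }^\lambda \ldots
\end{align}
It is cleaner to work with reciprocals. Put $ u_i = 1 / ( a_i b_i ) $ and $ v_i = a_i $, so that $ 1 / b_i = u_i v_i $. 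By Fact~\ref{fact:weighted power mean properties}~\ref{item:weighted power mean properties inverse}, the claim is equivalent to
\begin{align}
    M_{ -q }^\lambda ( u v ) \leq M_{ -p }^\lambda ( u )\, M_{ 1/d }^\lambda ( v ).
\end{align}
Here $ -q > 0 $ and $ 1/(-q) = 1/(-p) + d $, so this is exactly the positive case of H\"older already proved, now with exponents $ -p $ and $ 1/d $ combining to $ -q $.

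The case $ p = 0 $ ($ q = 0 $) follows either directly from the geometric mean inequality $ \prod ( a_i b_i )^{ \lambda_i } = \prod a_i^{ \lambda_i } \prod b_i^{ \lambda_i } \leq M_{ 1/d }^\lambda ( a ) M_0^\lambda ( b ) $, using Fact~\ref{fact:weighted power mean properties}~\ref{item:weighted power mean properties power monotone main}, or by the limit argument below.

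The endpoints $ p = -\infty $ ($ q = -1/d $) and $ p = 1/d $ ($ q = \infty $) would be obtained by letting $ p $ tend to them. Both sides of \eqref{eq:Holder claim} are continuous in $ p $ by Lemma~\ref{lem:weighted power mean continuous} together with Lemma~\ref{lem:Q_d continuous}. Alternatively, $ p = 1/d $ can be handled directly: $ M_{ 1/d }^\lambda ( ab ) \leq \max_i b_i \cdot M_{ 1/d }^\lambda ( a ) $ by componentwise monotonicity and homogeneity.

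The main obstacle is the bookkeeping of the degenerate cases, namely zero entries and the sign of $ p $. I would handle these by first treating strictly positive vectors and then using continuity in $ a $ and $ b $ (Lemma~\ref{lem:weighted power mean continuous}) to extend to $ \mathbb{ R }_{ \geq 0 }^m $.
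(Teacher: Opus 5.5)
Your proof is correct and follows the same route as the paper: H\"older's inequality for weighted power means driven by the identity $1/p = d + 1/Q_d(p)$, with the endpoint powers handled by continuity (Lemmas~\ref{lem:weighted power mean continuous} and \ref{lem:Q_d continuous}). The paper simply cites Hardy--Littlewood--P\'olya's mean-form H\"older inequality for all $p \in (-\infty, 1/d) \setminus \{0\}$. You instead make the argument self-contained: you derive the case $0 < p < 1/d$ from the classical two-factor H\"older inequality, reduce $p < 0$ to that case via the inverse formula (using $Q_d(-Q_d(p)) = -p$), and treat $p = 0$ and $p = 1/d$ directly rather than by limits.
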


\begin{proof}

If $ p \in ( - \infty , 1 / d ) \setminus \{ 0 \} $, then
\begin{align}
    \frac{ 1 }{ p }
    = d + \left( \frac{ 1 }{ p } - d \right)
    = d + \frac{ 1 - d p }{ p }
    = \frac{ 1 }{ 1 / d } + \frac{ 1 }{ Q_d ( p ) }.
\end{align}
Thus, \eqref{eq:Holder claim} follows from Hardy--Littlewood--Pólya \cite[Section 2.8, Theorem 13]{MR46395} and Lemma~\ref{lem:weighted power mean continuous}.
The remaining cases follow from Lemmas~\ref{lem:weighted power mean continuous} and \ref{lem:Q_d continuous}.
\end{proof}

We also discuss the following reverse H\"older-type inequality, which will be needed in the proof of the one-dimensional case of the main theorem (Section~\ref{subsec:one dimension proof}).

\begin{lemma}
    \label{lem:Holder infinity}

    Let $ \lambda $, $ a $, and $ b $ be as in Lemma~\ref{lem:Holder}.
    Then one has
    \begin{align}
        M_p^\lambda ( a ) M_{ - \infty }^\lambda ( b )
        \leq M_p^\lambda ( a_1 b_1 , a_2 b_2 , \dots , a_m b_m )
    \end{align}
    for any $ - \infty \leq p \leq \infty $.
    
\end{lemma}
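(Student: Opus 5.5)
The idea is to compare $ a_i b_i $ with $ a_i c $, where $ c \coloneqq M_{ - \infty }^\lambda ( b ) = \min \{ b_1 , b_2 , \dots , b_m \} $, and then pull the constant $ c $ out of the mean. By the definition of the minimum, $ c \leq b_i $ for every $ i = 1 , 2 , \dots , m $. Since $ a_i \geq 0 $, this gives $ c a_i \leq a_i b_i $ componentwise, i.e. $ c a \leq ( a_1 b_1 , \dots , a_m b_m ) $ coordinate by coordinate.

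Next, apply componentwise monotonicity (Fact~\ref{fact:weighted power mean properties} \ref{item:weighted power mean properties element monotone}) to this comparison. This yields
\begin{align}
    M_p^\lambda ( c a ) \leq M_p^\lambda ( a_1 b_1 , a_2 b_2 , \dots , a_m b_m ).
\end{align}
Then positive homogeneity (Fact~\ref{fact:weighted power mean properties} \ref{item:weighted power mean properties homogeneity}) with $ t = c \geq 0 $ rewrites the left-hand side as $ c M_p^\lambda ( a ) = M_p^\lambda ( a ) M_{ - \infty }^\lambda ( b ) $, which is exactly the claimed inequality. Both facts hold for every $ - \infty \leq p \leq \infty $ and every $ a \in \mathbb{ R }_{ \geq 0 }^m $, so no restriction on $ p $ is needed and the range $ p \leq 1 / d $ inherited from Lemma~\ref{lem:Holder} plays no role.

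The only point that needs care is the degenerate boundary cases: some $ a_i = 0 $ with $ p \leq 0 $, or $ c = 0 $. Here the definition of $ M_p^\lambda $ uses a separate clause, and I would check that the two facts as stated cover all of $ \mathbb{ R }_{ \geq 0 }^m $, including these zero cases. If $ c = 0 $, the left side is $ 0 $ and the inequality is trivial. If some $ a_i = 0 $ with $ p \leq 0 $, then $ M_p^\lambda ( a ) = 0 $ and again the left side vanishes. So the argument is essentially immediate, and I expect no real obstacle.
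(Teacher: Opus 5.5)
Your proof is correct and is the same argument as the paper's: it also notes $b_i \geq M_{-\infty}^\lambda(b)$ for each $i$ and concludes by componentwise monotonicity and positive homogeneity (Fact~\ref{fact:weighted power mean properties} \ref{item:weighted power mean properties element monotone} and \ref{item:weighted power mean properties homogeneity}). Your separate check of the zero cases is harmless but not needed, since both facts are stated for all of $\mathbb{R}_{\geq 0}^m$, all $t \geq 0$, and all $-\infty \leq p \leq \infty$.
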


\begin{proof}
    
For any $ i = 1 , 2 , \dots , m $, we have $ b_i \geq M_{ - \infty }^\lambda ( b ) $.
Thus, the assertion follows from Fact~\ref{fact:weighted power mean properties} \ref{item:weighted power mean properties homogeneity} and \ref{item:weighted power mean properties element monotone}.
\end{proof}

\subsection{Reverse Minkowski integral inequality for weighted power means}
\label{subsec:weighted mean prepare integral inequality}

In this subsection, we give the following reverse Minkowski integral inequality for the weighted power mean $ M_p^\lambda $.

\begin{lemma}[Reverse Minkowski integral inequality for weighted power means]
\label{lem:integral inequality}

    Let $ \lambda = ( \lambda_1 , \lambda_2 , \dots , \lambda_m ) $ be a weight and let $ - \infty \leq p \leq 1 $.
    For any integrable functions $ h_1 , h_2 , \dots , h_m \colon \mathbb{ R }^d \to \mathbb{ R }_{ \geq 0 } $, we set
    \begin{align}
        h ( x ) \coloneqq ( h_1 ( x ) , h_2 ( x ) , \dots , h_m ( x ) ), & &
        \Gamma ( x ) \coloneqq M_p^\lambda ( h ( x ) ).
    \end{align}
    Then $ \Gamma $ is integrable and
    \begin{align}
        \| \Gamma \|_1
        \leq M_p^\lambda \left( \| h_1\|_1 , \| h_2\|_1 , \dots , \| h_m\|_1 \right).
    \end{align}
    
\end{lemma}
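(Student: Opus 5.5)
The plan is to use only two structural properties of $\Phi \coloneqq M_p^\lambda$ on $\mathbb{R}_{\geq 0}^m$ when $p \leq 1$: it is positively homogeneous of degree one (Fact~\ref{fact:weighted power mean properties} \ref{item:weighted power mean properties homogeneity}), and it is concave. From these I will derive a Jensen-type inequality $\int \Phi(h) \leq \Phi(\int h)$ through a supporting hyperplane at the vector of integrals.

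\emph{Measurability and integrability.} $\Phi$ is continuous by Lemma~\ref{lem:weighted power mean continuous}, so $\Gamma = \Phi \circ h$ is measurable. Monotonicity in the power (Fact~\ref{fact:weighted power mean properties} \ref{item:weighted power mean properties power monotone main}) gives
\begin{align}
0 \leq \Gamma(x) \leq M_1^\lambda(h(x)) = \sum_{i=1}^m \lambda_i h_i(x),
\end{align}
and the right-hand side is integrable, so $\Gamma$ is integrable.

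\emph{Concavity of $\Phi$.} This is the classical Minkowski inequality for means with $p \leq 1$:
\begin{align}
M_p^\lambda(a+b) \geq M_p^\lambda(a) + M_p^\lambda(b).
\end{align}
It is proved in Hardy--Littlewood--P\'olya \cite{MR46395} for $0 < p \leq 1$ and for $p < 0$. For $p = 0$ and $p = -\infty$ it follows either by passing to the limit with Lemma~\ref{lem:weighted power mean continuous}, or directly: the weighted AM--GM inequality for $p = 0$, and the fact that a minimum of linear functions is concave for $p = -\infty$. Superadditivity together with homogeneity gives concavity.

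\emph{Main step, with the integral vector in the interior.} Set $A = (\|h_1\|_1, \dots, \|h_m\|_1)$ and suppose first that every $A_i > 0$. Then $\Phi$ is differentiable at $A$. For $p = -\infty$ I would instead use a superdifferential, namely any convex combination of the unit vectors $e_i$ with $A_i$ minimal. Let $c \in \mathbb{R}_{\geq 0}^m$ be this (super)gradient. Its entries are nonnegative because $\Phi$ is componentwise monotone. Concavity gives
\begin{align}
\Phi(a) \leq \Phi(A) + c \cdot (a - A) \quad \text{for all } a \in \mathbb{R}_{\geq 0}^m.
\end{align}
Euler's identity $c \cdot A = \Phi(A)$, which follows from homogeneity by differentiating $t \mapsto \Phi(tA)$ at $t = 1$, turns this into $\Phi(a) \leq c \cdot a$. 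Substituting $a = h(x)$ and integrating yields
\begin{align}
\|\Gamma\|_1 \leq c \cdot A = \Phi(A).
\end{align}

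\emph{Boundary case.} If some $A_i = 0$, fix a positive integrable $\varphi$ such as a Gaussian, and apply the interior case to $h_i + \varepsilon\varphi$. By componentwise monotonicity the left-hand side is at least $\|\Gamma\|_1$. As $\varepsilon \downarrow 0$, the right-hand side $\Phi(A + \varepsilon\|\varphi\|_1(1, \dots, 1))$ tends to $\Phi(A)$ by Lemma~\ref{lem:weighted power mean continuous}.

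I expect the only delicate point to be justifying concavity and the supporting-hyperplane inequality uniformly across the degenerate powers $p = 0, -\infty$, and across the zero boundary of the orthant, where $\Phi$ is not differentiable for $p \leq 0$. The perturbation by $\varepsilon\varphi$ is designed precisely to keep the argument away from that boundary.
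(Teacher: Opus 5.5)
Your argument is correct, but it takes a genuinely different route from the paper's.

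\textbf{The paper's route.} The paper does not prove the integral inequality itself. For $p \in (-\infty,1)\setminus\{0\}$ it cites the integral reverse Minkowski inequality of Hardy--Littlewood--P\'olya. It reaches the remaining powers $p = 0, 1, -\infty$ by letting the power tend to them, using the domination $\Gamma \leq M_1^\lambda(h)$, dominated convergence, and the joint continuity of Lemma~\ref{lem:weighted power mean continuous}.

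\textbf{Your route.} You need only the finite-dimensional Minkowski inequality. Superadditivity plus homogeneity gives concavity. A concave, positively $1$-homogeneous function lies below the linear form $a \mapsto c \cdot a$ given by a (super)gradient at an interior point $A$, with equality at $A$. Integrating $\Gamma(x) \leq c \cdot h(x)$ is then exactly Jensen's inequality $\int M_p^\lambda(h) \leq M_p^\lambda\bigl(\int h\bigr)$.

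\textbf{Comparison.} Your approach is self-contained apart from the discrete inequality, and it treats $p = 0$ and $p = -\infty$ directly rather than as limits. It also explains why the lemma fails for $p > 1$ (Remark~\ref{rem:integral inequality sharpest}): there $M_p^\lambda$ is convex rather than concave. The cost is the separate $\varepsilon\varphi$ perturbation at the boundary of the orthant, which the paper's citation absorbs.

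\textbf{Two minor simplifications.} First, the sign of $c$ plays no role: $\int c \cdot h = c \cdot A$ for any $c$, because each $h_i$ is integrable. Second, differentiability at $A$ is unnecessary. Any supergradient $c$ of a concave $1$-homogeneous $\Phi$ at $A$ satisfies $c \cdot A = \Phi(A)$; test the supergradient inequality at $a = tA$ with $t$ slightly above and below $1$. So $p = -\infty$ needs no separate treatment.
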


\begin{proof}

If $ p \in ( - \infty , 1 ) \setminus \{ 0 \} $, the assertion follows from Hardy--Littlewood--Pólya \cite[Section 6.13, Theorem 198]{MR46395}.
In the remaining cases, Fact~\ref{fact:weighted power mean properties} \ref{item:weighted power mean properties power monotone main} gives
\begin{align}
    0 \leq \Gamma ( x ) \leq M_1^\lambda ( h ( x ) ), & &
    \int_{ \mathbb{ R }^d } M_1^\lambda ( h ( x ) ) \; dx
    = \sum_{ i = 1 }^m \lambda_i \| h_i \|_1
    < \infty.
\end{align}
Thus, the assertion follows from the dominated convergence theorem and Lemma~\ref{lem:weighted power mean continuous}.
\end{proof}

Lemma~\ref{lem:integral inequality} will be used both to derive Theorem~\ref{thm:main-BBL} from Theorem~\ref{thm:multi input BBL} in the low-power range $ - \infty \leq p \leq 1 / ( d + 1 ) $ (Section~\ref{subsec:known inequalities low power}) and to prove the one-dimensional case of Theorem~\ref{thm:main-BBL} (Section~\ref{subsec:one dimension proof}).

\begin{remark}
    \label{rem:integral inequality sharpest}

    When $ m \geq 2 $, Lemma~\ref{lem:integral inequality} holds only in the range $ - \infty \leq p \leq 1 $ and fails for $ 1 < p \leq \infty $.
    Indeed, for $ 1 < p \leq \infty $, a counterexample can be constructed as follows.

    Let $ E_1 , E_2 , \dots , E_m \subset \mathbb{ R }^d $ be pairwise disjoint measurable sets of measure $ 1 $, and let $ h_i = \chi_{ E_i } $ be the indicator function of $ E_i $ for $ i = 1 , 2 , \dots , m $.
    Then
    \begin{align}
        M_p^\lambda \left( \| h_1\|_1 , \| h_2\|_1 , \dots , \| h_m\|_1 \right)
        = M_p^\lambda ( 1 , 1 , \dots , 1 )
        = 1.
    \end{align}
    On the other hand, if $ 1 < p < \infty $, then
    \begin{align}
        \| \Gamma \|_1
        = \sum_{ i = 1 }^m \lambda_i^{ 1 / p }
        > \sum_{ i = 1 }^m \lambda_i
        = 1,
    \end{align}
    while if $ p = \infty $, then
    \begin{align}
        \| \Gamma \|_1
        = m
        > 1.
    \end{align}
    Thus, these functions give a counterexample to Lemma~\ref{lem:integral inequality}.

    This counterexample shows that Lemma~\ref{lem:integral inequality} fails for $ 1 < p \leq \infty $.
    This is the reason why the derivation of Theorem~\ref{thm:main-BBL} from Theorem~\ref{thm:multi input BBL} in the low-power range does not extend directly to the higher-power range (Remark~\ref{rem:high power}).
    
\end{remark}

\section{Relation between the main theorem and previous results}
\label{sec:original BBL}

In this section, we explain the relations of Theorems~\ref{thm:multi input BBL} and \ref{thm:main-BBL} to previous results.

Ishige--Liu--Salani \cite[Theorem 1.1]{MR4958493} state the case $ m = 2 $ of Theorem~\ref{thm:multi input BBL} and point out immediately afterward that it can be extended to $ m = 3 , 4 , \dots $.
Indeed, the case $ m = 3 , 4 , \dots $ follows by normalizing the weights and iterating the case $ m = 2 $.
The definition of the weighted power mean in Ishige--Liu--Salani differs from that of $ M_p^\lambda ( a ) $ in Definition~\ref{def:weighted power mean definition}.
In their definition, the power mean is equal to $ 0 $ for any power $ p $ whenever the vector $ a $ has at least one zero component.
Theorem~\ref{thm:multi input BBL} uses the weighted power mean $ M_p^\lambda $ of Definition~\ref{def:weighted power mean definition}.
However, because the hypothesis is imposed on the product $ S $ of the positivity sets $ S ( f_i ) $ of the input functions, the two hypotheses are equivalent.

Various aspects of Theorem~\ref{thm:multi input BBL}, including equality cases and stability, have been studied in \cite{MR444863} \cite{MR2678033} \cite{MR2787578} \cite{MR3235317} \cite{MR3684803} \cite{MR3645132} \cite{MR3866904} \cite{MR3977216} \cite{MR4266751} \cite{MR4740622} \cite{figalli2025sharp} \cite{MR4950459} \cite{van2025brunn}.
For further details, see Ishige--Liu--Salani \cite{MR4958493}.

In Section~\ref{subsec:original BBL same output}, we show that the common-output case of Theorem~\ref{thm:main-BBL} is equivalent to Theorem~\ref{thm:multi input BBL}, and that the proportional-output case reduces to the common-output case.
In Section~\ref{subsec:known inequalities low power}, we explain that Theorem~\ref{thm:main-BBL} can be derived from Theorem~\ref{thm:multi input BBL} in the low-power range $ - \infty \leq p \leq 1 / ( d + 1 ) $, whereas the same derivation fails in the high-power range $ 1 / ( d + 1 ) < p \leq 1 / d $.
In Section~\ref{subsec:known inequalities Cordero-Erausquin--Maurey}, we explain the relation between the generalized Pr\'ekopa--Leindler inequality of Cordero-Erausquin--Maurey (Fact~\ref{fact:Cordero-Erausquin--Maurey}) and Theorem~\ref{thm:main-BBL}.

\subsection{Common and proportional outputs}
\label{subsec:original BBL same output}

In this subsection, we show that Theorem~\ref{thm:main-BBL} in the common-output case $ g_1 = g_2 = \dots = g_m $ is equivalent to Theorem~\ref{thm:multi input BBL}.
We also show that the proportional-output case reduces to the common-output case.

We first prepare several observations for proving the equivalence between the common-output case of Theorem~\ref{thm:main-BBL} and Theorem~\ref{thm:multi input BBL}.
Let $ d $, $ f_1 , f_2 , \dots , f_m $, $ S $, $ \lambda $, $ g_1 , g_2 , \dots , g_m $, $ L $, and $ K $ be as in Theorem~\ref{thm:main-BBL}, and suppose
\begin{align}
    p = - \alpha , & & g = g_1 = g_2 = \dots = g_m.
\end{align}
We first show that
\begin{align}
    K
    = \einf_{ x = ( x_1 , x_2 , \dots , x_m ) \in S } \frac{ g ( z_\lambda ( x ) ) }{ M_\alpha^\lambda ( f_1 ( x_1 ) , f_2 ( x_2 ) , \dots , f_m ( x_m ) ) }. \label{eq:K transform}
\end{align}
For any $ x = ( x_1 , x_2 , \dots , x_m ) \in S $, Example~\ref{ex:homogeneity-inverse-combine} gives
\begin{align}
L ( x )
& = M_p^\lambda \left( \frac{ g_1 ( z_\lambda ( x ) ) }{ f_1 ( x_1 ) } , \frac{ g_2 ( z_\lambda ( x ) ) }{ f_2 ( x_2 ) }, \dots , \frac{ g_m ( z_\lambda ( x ) ) }{ f_m ( x_m ) } \right) \\
& = M_p^\lambda \left( \frac{ g ( z_\lambda ( x ) ) }{ f_1 ( x_1 ) } , \frac{ g ( z_\lambda ( x ) ) }{ f_2 ( x_2 ) }, \dots , \frac{ g ( z_\lambda ( x ) ) }{ f_m ( x_m ) } \right) \\
& = \frac{ g ( z_\lambda ( x ) ) }{ M_\alpha^\lambda ( f_1 ( x_1 ) , f_2 ( x_2 ) , \dots , f_m ( x_m ) ) }.
\end{align}
Thus, \eqref{eq:K transform} holds.

Next, we show that
\begin{align}
M_{ Q_d ( p ) }^\lambda \left( \frac{ \| g \|_1 }{ \| f_1 \|_1 } , \frac{ \| g \|_1 }{ \| f_2 \|_1 } , \dots , \frac{ \| g \|_1 }{ \| f_m \|_1 } \right)
= \frac{ \| g \|_1 } { M_{ \alpha / ( 1 + d \alpha ) }^\lambda ( \| f_1 \|_1 , \| f_2 \|_1 , \dots , \| f_m \|_1 ) }. \label{eq:norm same output}
\end{align}
By Example~\ref{ex:homogeneity-inverse-combine}, we have
\begin{align}
M_{ Q_d ( p ) }^\lambda \left(
    \frac{ \| g \|_1 }{ \| f_1 \|_1 } ,
    \frac{ \| g \|_1 }{ \| f_2 \|_1 } ,
    \dots ,
    \frac{ \| g \|_1 }{ \| f_m \|_1 }
\right)
= \frac{ \| g \|_1 }
{ M_{ - Q_d ( p ) }^\lambda
    ( \| f_1 \|_1 , \| f_2 \|_1 , \dots , \| f_m \|_1 ) }.
\end{align}
Moreover, the power transform $ Q_d ( p ) $ satisfies
\begin{align}
- Q_d ( p )
= - \frac{ p }{ 1 - d p }
= \frac{ \alpha }{ 1 + d \alpha }.
\end{align}
Here we use the same convention as in Theorem~\ref{thm:multi input BBL}: $ \alpha / ( 1 + d \alpha ) = - \infty $ for $ \alpha = - 1 / d $ and $ \alpha / ( 1 + d \alpha ) = 1 / d $ for $ \alpha = \infty $.
Therefore, the equality \eqref{eq:norm same output} follows.

Next, we derive Theorem~\ref{thm:multi input BBL} from Theorem~\ref{thm:main-BBL}.
If the hypothesis \eqref{eq:multi input BBL assumption} of Theorem~\ref{thm:multi input BBL} holds, then
\begin{align}
    K = \einf_{ x \in S } L ( x ) \geq 1
\end{align}
by \eqref{eq:K transform}.
By applying Theorem~\ref{thm:main-BBL}, we get
\begin{align}
    M_{ Q_d ( p ) }^\lambda \left( \frac{ \| g \|_1 }{ \| f_1 \|_1 } , \frac{ \| g \|_1 }{ \| f_2 \|_1 } , \dots , \frac{ \| g \|_1 }{ \| f_m \|_1 } \right) 
    \geq 1.
\end{align}
Thus, the equality \eqref{eq:norm same output} yields \eqref{eq:multi input BBL claim}, and hence Theorem~\ref{thm:multi input BBL} follows.

Conversely, we derive the common-output case $ g = g_1 = g_2 = \dots = g_m $ of Theorem~\ref{thm:main-BBL} from Theorem~\ref{thm:multi input BBL}.
The assertion is immediate if $ K = 0 $, so we may assume $ K > 0 $.
Then \eqref{eq:K transform} gives
\begin{align}
    M_\alpha^\lambda ( f_1 ( x_1 ) , f_2 ( x_2 ) , \dots , f_m ( x_m ) ) \leq \frac{ g ( z_\lambda ( x ) ) }{ K } & & \text{ a.e. $ x = ( x_1 , \dots , x_m ) \in S $ }.
\end{align}
By applying Theorem~\ref{thm:multi input BBL} with $ g $ replaced by $ g / K $, we obtain
\begin{align}
    M_{ \alpha / ( 1 + d \alpha ) }^\lambda ( \| f_1 \|_1 , \| f_2 \|_1 , \dots , \| f_m \|_1 )
    \leq \frac{ \| g \|_1 }{ K }.
\end{align}
By \eqref{eq:norm same output}, we obtain
\begin{align}
    K \leq \frac{ \| g \|_1 }{ M_{ \alpha / ( 1 + d \alpha ) }^\lambda ( \| f_1 \|_1 , \| f_2 \|_1 , \dots , \| f_m \|_1 ) }
    = M_{ Q_d ( p ) }^\lambda \left( \frac{ \| g \|_1 }{ \| f_1 \|_1 } , \frac{ \| g \|_1 }{ \| f_2 \|_1 } , \dots , \frac{ \| g \|_1 }{ \| f_m \|_1 } \right).
\end{align}
This proves Theorem~\ref{thm:main-BBL} in the common-output case.

Finally, we consider proportional outputs in Theorem~\ref{thm:main-BBL}.
Suppose that there exist $ c_1 , c_2 , \dots , c_m > 0 $ and a function $ g \colon \mathbb{ R }^d \to \mathbb{ R }_{ \geq 0 } $ such that $ g_i = c_i g $ for every $ i = 1 , 2 , \dots , m $.
Then the problem reduces to the common-output case.
Indeed, setting $ \widetilde{ f_i } = f_i / c_i $, we have
\begin{align}
    S ( f_i ) = S ( \widetilde{ f_i } ), & &
\frac{ g_i ( z ) }{ f_i ( x_i ) }
= \frac{ c_i g ( z ) }{ f_i ( x_i ) }
= \frac{ g ( z ) }{ \widetilde{ f_i } ( x_i ) }.
\end{align}
Thus, the value of $ L ( x ) $ is unchanged by this replacement.
Moreover, since
\begin{align}
    \frac{ \| g_i \|_1 }{ \| f_i \|_1 }
    = \frac{ c_i \| g \|_1 }{ \| f_i \|_1 }
    = \frac{ \| g \|_1 }{ \| \widetilde{ f_i } \|_1 },
\end{align}
both sides of \eqref{eq:main-BBL-claim} are unchanged.
Thus, the proportional-output case reduces to the common-output case.

\subsection{Derivation of the main theorem in the low-power range from the classical Borell--Brascamp--Lieb inequality for multiple functions}
\label{subsec:known inequalities low power}

In this subsection, we explain that Theorem~\ref{thm:main-BBL} can be derived from Theorem~\ref{thm:multi input BBL} by combining Lemma~\ref{lem:Holder} with Lemma~\ref{lem:integral inequality} in the low-power range $ - \infty \leq p \leq 1 / ( d + 1 ) $.
The assertion is immediate if $ K = 0 $, so we may assume $ K > 0 $.
For $ i = 1 , 2 , \dots , m $, we put $ h_i \coloneqq g_i / \| f_i \|_1 $ and define $ \Gamma ( z ) \coloneqq M_{ Q_d ( p ) }^\lambda ( h_1 ( z ) , h_2 ( z ) , \dots , h_m ( z ) ) $.
Then by Lemma~\ref{lem:Holder}, we have
\begin{align}
    K
    \leq L ( x )
    \leq M_{ 1 / d }^\lambda \left( \frac{ \| f_1 \|_1 }{ f_1 ( x_1 ) } , \frac{ \| f_2 \|_1 }{ f_2 ( x_2 ) } , \dots , \frac{ \| f_m \|_1 }{ f_m ( x_m ) } \right) \Gamma ( z_\lambda ( x ) )
\end{align}
for almost every $ x = ( x_1 , x_2 , \dots , x_m ) \in S $.
By Fact~\ref{fact:weighted power mean properties} \ref{item:weighted power mean properties inverse}, we get
\begin{align}
    M_{ 1 / d }^\lambda \left( \frac{ \| f_1 \|_1 }{ f_1 ( x_1 ) } , \frac{ \| f_2 \|_1 }{ f_2 ( x_2 ) } , \dots , \frac{ \| f_m \|_1 }{ f_m ( x_m ) } \right)
    = M_{ - 1 / d }^\lambda \left( \frac{ f_1 ( x_1 ) }{ \| f_1 \|_1 } , \frac{ f_2 ( x_2 ) }{ \| f_2 \|_1 } , \dots , \frac{ f_m ( x_m ) }{ \| f_m \|_1 } \right)^{ - 1 },
\end{align}
and therefore
\begin{align}
    M_{ - 1 / d }^\lambda \left( \frac{ f_1 ( x_1 ) }{ \| f_1 \|_1 } , \frac{ f_2 ( x_2 ) }{ \| f_2 \|_1 } , \dots , \frac{ f_m ( x_m ) }{ \| f_m \|_1 } \right)
    \leq \frac{ \Gamma ( z_\lambda ( x ) ) }{ K }.
\end{align}
If $ \| \Gamma \|_1 < \infty $, then Theorem~\ref{thm:multi input BBL} yields
\begin{align}
    \frac{ \| \Gamma \|_1 }{ K }
    \geq M_{ - \infty }^\lambda \left( \frac{ \| f_1 \|_1 }{ \| f_1 \|_1 } , \frac{ \| f_2 \|_1 }{ \| f_2 \|_1 } , \dots , \frac{ \| f_m \|_1 }{ \| f_m \|_1 } \right)
    = M_{ - \infty }^\lambda ( 1 , 1 , \dots , 1 )
    = 1,
\end{align}
so $ K \leq \| \Gamma \|_1 $.
This inequality $ K \leq \| \Gamma \|_1 $ is trivially true when $ \| \Gamma \|_1 = \infty $.

Applying Lemma~\ref{lem:applicability} with $ e = 1 $, we obtain
    \begin{align}
        Q_d ( p ) \leq 1
        \Longleftrightarrow
        p \leq \frac{ 1 }{ d + 1 }. \label{eq:applicable range}
    \end{align}
Thus, it follows from \eqref{eq:applicable range} and $ p \leq 1 / ( d + 1 ) $ that $ Q_d ( p ) \leq 1 $.
By applying Lemma~\ref{lem:integral inequality} with its power $ p $ replaced by $ Q_d ( p ) $, we see that $ \Gamma $ is integrable and
\begin{align}
    \| \Gamma \|_1
    \leq M_{ Q_d ( p ) }^\lambda ( \| h_1 \|_1 , \| h_2 \|_1 , \dots , \| h_m \|_1 ). \label{eq:crucial}
\end{align}
Thus, it follows from $ h_i = g_i / \| f_i \|_1 $ that
\begin{align}
    \| \Gamma \|_1
    \leq M_{ Q_d ( p ) }^\lambda \left( \frac{ \| g_1 \|_1 }{ \| f_1 \|_1 } , \frac{ \| g_2 \|_1 }{ \| f_2 \|_1 } , \dots , \frac{ \| g_m \|_1 }{ \| f_m \|_1 } \right).
\end{align}
Combining this with $ K \leq \| \Gamma \|_1 $ proves Theorem~\ref{thm:main-BBL}.

\begin{remark}
    \label{rem:high power}

    Even in the high-power range $ 1 / ( d + 1 ) < p \leq 1 / d $, the preceding argument still yields $ K \leq \| \Gamma \|_1 $.
    On the other hand, by \eqref{eq:applicable range}, one has $ Q_d ( p ) > 1 $ when $ 1 / ( d + 1 ) < p \leq 1 / d $.
    Thus, Lemma~\ref{lem:integral inequality} cannot be applied with its power $ p $ replaced by $ Q_d ( p ) $ (Remark~\ref{rem:integral inequality sharpest}).
    Consequently, the estimate \eqref{eq:crucial} is unavailable, and the above method combining Lemmas~\ref{lem:Holder} and \ref{lem:integral inequality} does not yield Theorem~\ref{thm:main-BBL}.

    Nevertheless, Theorem~\ref{thm:main-BBL} itself remains valid in the high-power range $ 1 / ( d + 1 ) < p \leq 1 / d $.
    In Sections~\ref{sec:one dimension} and \ref{sec:high dimension}, we prove Theorem~\ref{thm:main-BBL} independently of the preceding derivation.
This proof covers the full range $ - \infty \leq p \leq 1 / d $.
    
\end{remark}

\subsection{Multi-output Pr\'ekopa--Leindler inequality of Cordero-Erausquin--Maurey}
\label{subsec:known inequalities Cordero-Erausquin--Maurey}

Cordero-Erausquin--Maurey established a Pr\'ekopa--Leindler inequality for several input and output functions connected by a general contractive linear operator.
We describe this generalized Pr\'ekopa--Leindler inequality (Fact~\ref{fact:Cordero-Erausquin--Maurey}).
We also explain that Fact~\ref{fact:Cordero-Erausquin--Maurey} contains the case $ p = 0 $ of Theorem~\ref{thm:main-BBL} with the essential infimum in \eqref{eq:main theorem essential inf} replaced by the ordinary infimum over $ S $.

Cordero-Erausquin--Maurey proved the following inequality.

\begin{fact}[Cordero-Erausquin--Maurey {\cite[Theorem 1.6]{MR3652434}}]
\label{fact:Cordero-Erausquin--Maurey}

Suppose that 
\begin{align}
    \lambda^\mathrm{ I } = ( \lambda_1^\mathrm{ I } , \lambda_2^\mathrm{ I } , \dots , \lambda_{ m_\mathrm{ I } }^\mathrm{ I } ), & &
     \lambda^\mathrm{ O } = ( \lambda_1^\mathrm{ O } , \lambda_2^\mathrm{ O } , \dots , \lambda_{ m_\mathrm{ O } }^\mathrm{ O } )
\end{align}
are weights, and $ A \colon ( \mathbb{ R }^d )^{ m_\mathrm{ I } } \to ( \mathbb{ R }^d )^{ m_\mathrm{ O } } $ is a linear operator satisfying
\begin{align}
    A ( w , w , \dots , w ) = ( w , w , \dots , w ) \label{eq:Cordero-Erausquin--Maurey same point}
\end{align}
for any $ w \in \mathbb{ R }^d $.
Let $ \Phi_i^\mathrm{ I } \colon \mathbb{ R }^d \to ( - \infty , \infty ] $ be a Borel measurable function such that $ \exp ( - \Phi_i^\mathrm{ I } ( x ) ) $ is integrable for every $ i = 1 , 2 , \dots , m_\mathrm{ I } $.
Similarly, let $ \Phi_j^\mathrm{ O } \colon \mathbb{ R }^d \to ( - \infty , \infty ] $ be a Borel measurable function such that $ \exp ( - \Phi_j^\mathrm{ O } ( x ) ) $ is integrable for every $ j = 1 , 2 , \dots , m_\mathrm{ O } $.
Assume that
\begin{align}
    M_2^{ \lambda^\mathrm{ O } } ( | ( A x )_1 | , | ( A x )_2 | , \dots , | ( A x )_{ m_\mathrm{ O } } | )
    & \leq M_2^{ \lambda^\mathrm{ I } } ( | x_1 | , | x_2 | , \dots , | x_{ m_\mathrm{ I } } | ), \label{eq:Cordero-Erausquin--Maurey convergence condition} \\
    \sum_{ j = 1 }^{ m_\mathrm{ O } } \lambda_j^\mathrm{ O } \Phi_j^\mathrm{ O } ( ( A x )_j )
    & \leq \sum_{ i = 1 }^{ m_\mathrm{ I } } \lambda_i^\mathrm{ I } \Phi_i^\mathrm{ I } ( x_i )
\end{align}
hold for any $ x = ( x_1 , x_2 , \dots , x_{ m_\mathrm{ I } } ) \in ( \mathbb{ R }^d )^{ m_\mathrm{ I } } $.
Here $ | w | $ denotes the Euclidean norm of $ w \in\mathbb{ R }^d $.
Then we have
\begin{align}
    & \phantom{ { } = { } } M_0^{ \lambda^\mathrm{ I } } \left( \int_{ \mathbb{ R }^d } \exp ( - \Phi_1^\mathrm{ I } ( x ) ) \; dx , \dots , \int_{ \mathbb{ R }^d } \exp ( - \Phi_{ m_\mathrm{ I } }^\mathrm{ I } ( x ) ) \; dx \right) \\
    & \leq M_0^{ \lambda^\mathrm{ O } } \left( \int_{ \mathbb{ R }^d } \exp ( - \Phi_1^\mathrm{ O } ( x ) ) \; dx , \dots , \int_{ \mathbb{ R }^d } \exp ( - \Phi_{ m_\mathrm{ O } }^\mathrm{ O } ( x ) ) \; dx \right). \label{eq:Cordero-Erausquin--Maurey claim}
\end{align}

\end{fact}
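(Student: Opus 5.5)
The approach is Borell's stochastic (Boué--Dupuis) variational formula for Gaussian measure. The key point is to drive all input processes by \emph{one and the same} Brownian motion, so that the condition \eqref{eq:Cordero-Erausquin--Maurey same point} makes the image process of each output again a Brownian motion plus a drift.

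\textbf{Step 1: reduction to Gaussian measure.} Let $\gamma$ be the standard Gaussian measure on $\mathbb{R}^d$. Put
\begin{align}
F_i^{\mathrm{I}}(w) \coloneqq -\Phi_i^{\mathrm{I}}(w) + \tfrac12 |w|^2, & & F_j^{\mathrm{O}}(w) \coloneqq -\Phi_j^{\mathrm{O}}(w) + \tfrac12 |w|^2 .
\end{align}
Then $\int e^{-\Phi}\,dx = (2\pi)^{d/2}\int e^{F}\,d\gamma$. Since $\sum_i\lambda^{\mathrm{I}}_i = \sum_j\lambda^{\mathrm{O}}_j = 1$, the constants $(2\pi)^{d/2}$ factor out of both sides of \eqref{eq:Cordero-Erausquin--Maurey claim}. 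Squaring \eqref{eq:Cordero-Erausquin--Maurey convergence condition} gives $\sum_j \lambda^{\mathrm{O}}_j |(Ax)_j|^2 \le \sum_i \lambda^{\mathrm{I}}_i |x_i|^2$. Combined with the second hypothesis, this yields
\begin{align}
\sum_{j} \lambda_j^{\mathrm{O}} F_j^{\mathrm{O}}((Ax)_j) \ge \sum_i \lambda_i^{\mathrm{I}} F_i^{\mathrm{I}}(x_i) \qquad \text{for all } x .
\end{align}
The claim then becomes $\sum_i \lambda^{\mathrm{I}}_i \log\int e^{F^{\mathrm{I}}_i}d\gamma \le \sum_j \lambda^{\mathrm{O}}_j \log\int e^{F^{\mathrm{O}}_j}d\gamma$.

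\textbf{Step 2: variational formula with a common Brownian motion.} Let $W$ be a $d$-dimensional Brownian motion on $[0,1]$. For $F$ bounded and measurable, Borell's formula reads
\begin{align}
\log\int e^{F}\,d\gamma = \sup_u \mathbb{E}\Bigl[F\Bigl(W_1+\int_0^1 u_s\,ds\Bigr) - \tfrac12\int_0^1 |u_s|^2\,ds\Bigr],
\end{align}
where the supremum is over drifts $u$ adapted to the filtration of $W$.

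For each $i$, fix an $\varepsilon$-optimal adapted drift $u_i$ for $F_i^{\mathrm{I}}$, all with respect to the same $W$, and set $X_i = W_1 + \int_0^1 u_i$. By linearity and \eqref{eq:Cordero-Erausquin--Maurey same point}, the $j$-th output point is $(AX)_j = W_1 + \int_0^1 (Au)_j\,ds$. Here $(Au)_j$ is again an adapted drift, so it is admissible in the formula for $F_j^{\mathrm{O}}$. The contraction \eqref{eq:Cordero-Erausquin--Maurey convergence condition}, applied pointwise in time, gives $\sum_j \lambda_j^{\mathrm{O}} |(Au_s)_j|^2 \le \sum_i\lambda_i^{\mathrm{I}} |u_{i,s}|^2$.

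Using Step 1 and this energy inequality, we obtain
\begin{align}
\sum_j \lambda^{\mathrm{O}}_j \log\int e^{F^{\mathrm{O}}_j}d\gamma
&\ge \mathbb{E}\Bigl[\sum_j \lambda^{\mathrm{O}}_j F^{\mathrm{O}}_j((AX)_j) - \tfrac12\int_0^1\sum_j\lambda^{\mathrm{O}}_j|(Au_s)_j|^2 ds\Bigr] \\
&\ge \mathbb{E}\Bigl[\sum_i \lambda^{\mathrm{I}}_i F^{\mathrm{I}}_i(X_i) - \tfrac12\int_0^1\sum_i\lambda^{\mathrm{I}}_i|u_{i,s}|^2 ds\Bigr] \\
&\ge \sum_i\lambda^{\mathrm{I}}_i \log\int e^{F^{\mathrm{I}}_i}d\gamma - \varepsilon .
\end{align}
Letting $\varepsilon\to0$ proves the bounded case.

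\textbf{Step 3: approximation (main obstacle).} Borell's formula needs $F$ bounded, or at least bounded above. Here $F$ may be $-\infty$ (where $\Phi=\infty$), and it is unbounded above because of the $|w|^2/2$ term. The reduction in Step 1 must be preserved under truncation, so the truncations cannot be done independently for inputs and outputs.

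The plan for the inputs is to replace $F_i^{\mathrm{I}}$ by $\max(\min(F_i^{\mathrm{I}},N),-N)$ restricted to a large ball, and send $N\to\infty$ by monotone convergence. For the outputs, the hypothesis of Step 1 is only needed along the points actually visited, namely $(AX)_j$. So it suffices to apply the variational \emph{lower bound}, which holds for $F^{\mathrm{O}}_j$ merely bounded below on the relevant region (truncate from below at $-N$ and take $\sup$ over drifts with bounded energy). Integrability of $e^{-\Phi}$ guarantees that the limits are finite. Checking that these one-sided truncations are compatible with the pointwise inequality of Step 1 is the delicate point.
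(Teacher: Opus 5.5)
Your overall strategy, Borell's stochastic method with a single Brownian motion driving all inputs, is the method the paper attributes to Cordero-Erausquin--Maurey; the paper itself gives no proof and only cites the result. Step 2 correctly proves the \emph{Gaussian} version of the statement, that is, the conclusion under the hypothesis $\sum_j \lambda_j^{\mathrm{O}} F_j^{\mathrm{O}}((Ax)_j) \ge \sum_i \lambda_i^{\mathrm{I}} F_i^{\mathrm{I}}(x_i)$.

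The gap is in Step 1: this hypothesis does not follow from the Lebesgue one. Because $F=-\Phi+\tfrac12|w|^2$ carries the quadratic term with a plus sign, you would need $\sum_j\lambda_j^{\mathrm{O}}|(Ax)_j|^2\ge\sum_i\lambda_i^{\mathrm{I}}|x_i|^2$. That is the \emph{reverse} of the squared contraction condition. The claim is in fact false. Take $d=1$, $m_{\mathrm{I}}=2$, $m_{\mathrm{O}}=1$, $\lambda^{\mathrm{I}}=(1/2,1/2)$, $A(x_1,x_2)=(x_1+x_2)/2$, and every $\Phi$ equal to $0$ on $[-1,1]$ and $+\infty$ outside. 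All hypotheses of the Fact hold, yet at $x=(1,-1)$ your inequality reads $0\ge 1/2$. In Borell's argument the contraction can be spent only once, on the drift energies, and cannot also pay for the Gaussian weights. So Step 2 is being fed a hypothesis you do not have.

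The missing idea is a dilation that makes the quadratic defect negligible.
\begin{enumerate}
\item Replace $e^{-\Phi_i^{\mathrm{I}}}$ by $\min\{e^{-\Phi_i^{\mathrm{I}}},N\}\chi_{B_R}$. This only increases the right-hand side of the second hypothesis, and monotone convergence removes it at the end.
\item Replace every $\Phi$ by $\Phi(\sigma\,\cdot)$. This preserves both hypotheses because $A$ is linear, and the Jacobian factors $\sigma^{-d}$ cancel because the weights sum to $1$.
\item The input side is now finite only when every $|x_i|\le R/\sigma$. There the defect $\tfrac12\bigl(\sum_i\lambda_i^{\mathrm{I}}|x_i|^2-\sum_j\lambda_j^{\mathrm{O}}|(Ax)_j|^2\bigr)$ lies in $[0,R^2/(2\sigma^2)]$. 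Adding the constant $R^2/(2\sigma^2)$ to every $F_j^{\mathrm{O}}$ therefore restores the Gaussian hypothesis.
\item Step 2 then gives the claim up to a factor $e^{R^2/(2\sigma^2)}$. Let $\sigma\to\infty$, then $N,R\to\infty$.
\end{enumerate}

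This also removes the difficulty you flag in Step 3. The truncated inputs are bounded above, which is what you need for $\varepsilon$-optimal drifts. The outputs need no truncation, because for them you only use the Gibbs--Girsanov lower bound $\log\int e^{F}\,d\gamma\ge\mathbb{E}\bigl[F\bigl(W_1+\int_0^1 v_s\,ds\bigr)\bigr]-\tfrac12\mathbb{E}\int_0^1|v_s|^2\,ds$. This holds for every adapted $v$ of finite energy whenever $\int e^F\,d\gamma<\infty$.
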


When $ m_\mathrm{ O } = 1 $, Fact~\ref{fact:Cordero-Erausquin--Maurey} agrees with the Pr\'ekopa--Leindler inequality for multiple functions.
Thus, Fact~\ref{fact:Cordero-Erausquin--Maurey} gives a Pr\'ekopa--Leindler inequality for finitely many input and output functions connected by a general linear operator $ A $.

\begin{remark}

    Cordero-Erausquin--Maurey also discuss further generalizations of Fact~\ref{fact:Cordero-Erausquin--Maurey}.
    The input functions $ \Phi_i^\mathrm{ I } $ and output functions $ \Phi_j^\mathrm{ O } $ need not form finite families.
    That is, the result extends to families indexed by a general finite measure space \cite[Theorem 1.3]{MR3652434}.
    They also treat the case in which the dimensions of the input and output functions differ \cite[Theorem 5.1]{MR3652434}.
    This generalization contains the geometric Brascamp--Lieb inequality \cite{MR412366} and Barthe's reverse Brascamp--Lieb inequality \cite{MR1650312}.
    A framework involving multiple input functions and multiple output functions has also been studied in the form of forward--reverse Brascamp--Lieb inequalities \cite{MR3879918} \cite{MR4236529} \cite{MR4930591}.
    
\end{remark}

Fact~\ref{fact:Cordero-Erausquin--Maurey} implies the case of Theorem~\ref{thm:main-BBL} with $ p = 0 $ when the essential infimum in \eqref{eq:main theorem essential inf} is replaced by the infimum over all $ x \in S $.
Indeed, let $ f_1 , f_2 , \dots , f_m $ and $ g_1 , g_2 , \dots , g_m $ be Borel measurable, and let $ S $ be as in Theorem~\ref{thm:main-BBL}.
Set
\begin{align}
K' \coloneqq \inf_{ x \in S } L ( x ). \label{eq:K' definition}
\end{align}
Then $ K' \leq K $, although equality need not hold in general.
We can derive from Fact~\ref{fact:Cordero-Erausquin--Maurey} that
\begin{align}
    K' \leq M_0^\lambda \left( \frac{ \| g_1 \|_1 }{ \| f_1 \|_1 } , \frac{ \| g_2 \|_1 }{ \| f_2 \|_1 } , \dots , \frac{ \| g_m \|_1 }{ \| f_m \|_1 } \right) \label{eq:main theorem restrict}
\end{align}
as follows.
The assertion is immediate if $ K' = 0 $, so we may assume $ K' > 0 $.
We set
\begin{align}
    m_\mathrm{ I } = m_\mathrm{ O } = m , & &
    \lambda^{ \mathrm{ I } } = \lambda^{ \mathrm{ O } } = \lambda.
\end{align}
We define a linear operator $ A \colon ( \mathbb{ R }^d )^m \to ( \mathbb{ R }^d )^m $ by $ A ( x ) \coloneqq ( z_\lambda ( x ) , z_\lambda ( x ) , \dots , z_\lambda ( x ) ) $ for $ x \in ( \mathbb{ R }^d )^m $.
Then \eqref{eq:Cordero-Erausquin--Maurey same point} holds for any $ w \in \mathbb{ R }^d $.
Moreover, for any $ x = ( x_1 , x_2 , \dots , x_m ) \in ( \mathbb{ R }^d )^m $,
\begin{align}
    M_2^{ \lambda^\mathrm{ O } } ( | ( A x )_1 | , \dots , | ( A x )_{ m_\mathrm{ O } } | )
    = M_2^\lambda ( | z_\lambda ( x ) | , \dots , | z_\lambda ( x ) | )
    = | z_\lambda ( x ) |. \label{eq:convergence output}
\end{align}
By the triangle inequality for the Euclidean norm,
\begin{align}
    | z_\lambda ( x ) |
    = \left| \sum_{ i = 1 }^m \lambda_i x_i \right|
    \leq \sum_{ i = 1 }^m \lambda_i | x_i |
    = M_1^\lambda ( | x_1 | , | x_2 | , \dots , | x_m | ).
\end{align}
Thus, Fact~\ref{fact:weighted power mean properties} \ref{item:weighted power mean properties power monotone main} gives
\begin{align}
    | z_\lambda ( x ) |
    \leq M_2^{ \lambda^\mathrm{ I } } ( | x_1 | , | x_2 | , \dots , | x_{ m_\mathrm{ I } } | ). \label{eq:convergence input}
\end{align}
Combining \eqref{eq:convergence output} and \eqref{eq:convergence input}, we obtain \eqref{eq:Cordero-Erausquin--Maurey convergence condition}.
For $ i = 1 , 2 , \dots , m $, we define
\begin{align}
    \Phi_i^\mathrm{ I } \coloneqq - \log f_i , & &
    \Phi_i^\mathrm{ O } \coloneqq - \log \frac{ g_i }{ K' } ,
\end{align}
with the convention $ - \log 0 = \infty $.
If $ x \in S $, then \eqref{eq:K' definition} gives $ K' \leq L ( x ) $.
Therefore
\begin{align}
\sum_{ j = 1 }^{ m_\mathrm{ O } } \lambda_j^\mathrm{ O } \Phi_j^\mathrm{ O } ( ( A x )_j )
    = \sum_{ j = 1 }^m \lambda_j \Phi_j^\mathrm{ O } ( z_\lambda ( x ) )
    \leq \sum_{ i = 1 }^m \lambda_i \Phi_i^\mathrm{ I } ( x_i )
    = \sum_{ i = 1 }^{ m_\mathrm{ I } } \lambda_i^\mathrm{ I } \Phi_i^\mathrm{ I } ( x_i ) . \label{eq:K' log version}
\end{align}
If $ x \notin S $, then
\begin{align}
  \sum_{ i = 1 }^m \lambda_i \Phi_i^\mathrm{ I } ( x_i ) = \infty ,
\end{align}
so \eqref{eq:K' log version} holds for any $ x = ( x_1 , x_2 , \dots , x_m ) \in ( \mathbb{ R }^d )^m $.
Thus, Fact~\ref{fact:Cordero-Erausquin--Maurey} gives \eqref{eq:Cordero-Erausquin--Maurey claim}.
For any $ i = 1 , 2 , \dots , m $, we have
\begin{align}
     \exp ( - \Phi_i^\mathrm{ I } ( x ) )
     = f_i ( x ) , & &
     \exp ( - \Phi_i^\mathrm{ O } ( x ) )
     = \frac{ g_i ( x ) }{ K' } .
\end{align}
Thus, \eqref{eq:Cordero-Erausquin--Maurey claim} implies
\begin{align}
    \prod_{ i = 1 }^m \| f_i \|_1^{ \lambda_i }
    \leq \prod_{ i = 1 }^m \left( \frac{ \| g_i \|_1 }{ K' } \right)^{ \lambda_i }
    = \frac{ 1 }{ K' } \prod_{ i = 1 }^m \| g_i \|_1^{ \lambda_i },
\end{align}
which proves \eqref{eq:main theorem restrict}.

The quantity $ K $ in \eqref{eq:main theorem essential inf} is instead defined using an essential infimum.
Therefore, after replacing $ K' $ by $ K $ in \eqref{eq:K' log version}, the corresponding inequality generally holds only almost everywhere on the product space $ S $.
Since Cordero-Erausquin--Maurey assume \eqref{eq:K' log version} for any $ x \in ( \mathbb{ R }^d )^m $, the case $ p = 0 $ of Theorem~\ref{thm:main-BBL} does not follow directly from Fact~\ref{fact:Cordero-Erausquin--Maurey}.

The inequality \eqref{eq:main theorem restrict} is the case $ p = 0 $ of Theorem~\ref{thm:main-BBL} with the essential infimum replaced by the ordinary infimum over $ S $.
It follows from Fact~\ref{fact:Cordero-Erausquin--Maurey} by choosing the operator so that all output components are evaluated at the common point $ z_\lambda ( x ) $.
Fact~\ref{fact:Cordero-Erausquin--Maurey} allows distinct output functions to be evaluated at different linear combinations, but its integral conclusion is restricted to $ p = 0 $.
Cordero-Erausquin--Maurey prove Fact~\ref{fact:Cordero-Erausquin--Maurey} by Borell's probabilistic method.

By contrast, Theorem~\ref{thm:main-BBL} restricts the output points to be common but gives an integral inequality involving the weighted power mean $ M_p^\lambda $ for the full range $ - \infty \leq p \leq 1 / d $.
Our proof of Theorem~\ref{thm:main-BBL} establishes the one-dimensional case through a minimum-type integral inequality for multiple functions (Lemma~\ref{lem:multifunction-minimum-integral}) and the higher-dimensional case through a dimension-additivity lemma based on slice integrals (Lemma~\ref{lem:dimension-additivity}).
Table~\ref{tab:Cordero-Erausquin--Maurey comparison} compares Fact~\ref{fact:Cordero-Erausquin--Maurey} with Theorem~\ref{thm:main-BBL}.

\begin{table}[htbp]
    \centering
    \caption{Comparison of Fact~\ref{fact:Cordero-Erausquin--Maurey} and Theorem~\ref{thm:main-BBL}}
    \label{tab:Cordero-Erausquin--Maurey comparison}
    \bigskip
    \begin{tabular}{
        |p{0.25\linewidth}
        ||p{0.33\linewidth}
        |p{0.33\linewidth}|
    }
        \hline
        & Fact~\ref{fact:Cordero-Erausquin--Maurey} & Theorem~\ref{thm:main-BBL} \\
        \hline \hline

        Number of functions &
        Finite (extendable to families indexed by a general finite measure space) &
        Finite
        \\
        \hline

        Numbers of input and output functions & 
        May be different &
        Equal
        \\
        \hline

        Range of powers &
        $ p = 0 $ &
        $ - \infty \leq p \leq 1 / d $
        \\
        \hline

        Points where the hypothesis is imposed &
        All input points &
        Almost everywhere
        \\
        \hline

        Dimensions of the domains of input and output functions &
        Same (extendable to different dimensions) &
        Same
        \\
        \hline

        Method of proof &
        Borell's probabilistic method &
        Minimum-type integral inequality and slice integrals
        \\
        \hline
    \end{tabular}
\end{table}

\section{Proof of the main theorem in dimension one}
\label{sec:one dimension}

In this section, we prove the one-dimensional case of Theorem~\ref{thm:main-BBL}.
In Section~\ref{subsec:one dimension two function}, we explain the one-dimensional two-function minimum-type integral inequality of Brascamp--Lieb (Fact~\ref{fact:Brascamp--Lieb}) and prove preservation of the $ L^\infty $ norm (Lemma~\ref{lem:B esssup}).
In Section~\ref{subsec:one dimension multifunction}, by using the results of Section~\ref{subsec:one dimension two function}, we prove a minimum-type integral inequality for multiple functions (Lemma~\ref{lem:multifunction-minimum-integral}) needed for the proof of Theorem~\ref{thm:main-BBL}.
In Section~\ref{subsec:one dimension proof}, by combining this integral inequality with Lemmas~\ref{lem:Holder}, \ref{lem:Holder infinity}, and \ref{lem:integral inequality}, we complete the proof of Theorem~\ref{thm:main-BBL} in dimension one.

\subsection{Two-function minimum-type integral inequality of Brascamp--Lieb}
\label{subsec:one dimension two function}

In this subsection, we introduce the two-function minimum-type integral inequality of Brascamp--Lieb.

\begin{fact}[Brascamp--Lieb {\cite[Theorem 3.1 and Theorem A.2]{MR450480}}]
    \label{fact:Brascamp--Lieb}

Let $ f_1 , f_2 \colon \mathbb{ R } \to \mathbb{ R }_{ \geq 0 } $ be measurable functions satisfying
\begin{align}
    T \coloneqq \| f_1 \|_\infty = \| f_2 \|_\infty, \label{eq:Brascamp--Lieb T definition}
\end{align}
and let $ \lambda = ( \lambda_1 , \lambda_2 ) $ be a weight.
Then the function $ B \colon \mathbb{ R } \to [ 0 , \infty ] $ defined by
\begin{align}
    B ( y ) \coloneqq \esup_{ x_2 \in \mathbb{ R } } \min \left\{ f_1 \left( \frac{ y - \lambda_2 x_2 }{ \lambda_1 } \right) , f_2 ( x_2 ) \right\}
\end{align}
is lower semicontinuous and hence measurable, and satisfies
\begin{align}
    \lambda_1 \| f_1 \|_1 + \lambda_2 \| f_2 \|_1 \leq \| B \|_1.
\end{align}

\end{fact}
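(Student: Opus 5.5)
The plan is to prove the two assertions separately: lower semicontinuity of $ B $ by analysing its superlevel sets, and the integral inequality by a layer-cake argument combined with a one-dimensional Brunn--Minkowski inequality for ``essential'' Minkowski sums. The common tool is a description of the superlevel sets of $ B $. For $ t \geq 0 $ put
\begin{align}
    A_t \coloneqq \{ f_1 > t \}, & & C_t \coloneqq \{ f_2 > t \}.
\end{align}
By definition of the essential supremum, $ B ( y ) > t $ holds if and only if the set
\begin{align}
    \{ x_2 \in C_t \mid ( y - \lambda_2 x_2 ) / \lambda_1 \in A_t \}
\end{align}
has positive measure. After the substitution $ x_2 = u / \lambda_2 $, this says that $ \varphi_t ( y ) \coloneqq ( \chi_{ \lambda_1 A_t } * \chi_{ \lambda_2 C_t } ) ( y ) > 0 $.

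\emph{Lower semicontinuity.} I will show that $ \{ B > t \} = \{ \varphi_t > 0 \} $ is open for every $ t $. For bounded windows $ R > 0 $, the function $ \varphi_t^R \coloneqq \chi_{ \lambda_1 A_t \cap [ - R , R ] } * \chi_{ \lambda_2 C_t \cap [ - R , R ] } $ is a convolution of an $ L^1 $ function with an $ L^\infty $ function. It is therefore continuous, by continuity of translation in $ L^1 $. The functions $ \varphi_t^R $ increase to $ \varphi_t $ as $ R \to \infty $ by monotone convergence. Hence $ \varphi_t $ is lower semicontinuous, and $ \{ \varphi_t > 0 \} = \bigcup_R \{ \varphi_t^R > 0 \} $ is open. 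So $ \{ B > t \} $ is open for every $ t $, which means $ B $ is lower semicontinuous and in particular measurable.

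\emph{Level-set inequality.} Fix $ 0 \leq t < T $. By the definition of $ T $ in \eqref{eq:Brascamp--Lieb T definition}, both $ A_t $ and $ C_t $ have positive measure. I claim
\begin{align}
    | \{ B > t \} | \geq \lambda_1 | A_t | + \lambda_2 | C_t |.
\end{align}
Choose compact sets $ K \subset \lambda_1 A_t $ and $ L \subset \lambda_2 C_t $ consisting of Lebesgue density points of the respective sets, with measures arbitrarily close to $ | \lambda_1 A_t | $ and $ | \lambda_2 C_t | $ (inner regularity, and almost every point is a density point). If $ a $ and $ c $ are density points of $ \lambda_1 A_t $ and $ \lambda_2 C_t $, then $ \lambda_1 A_t \cap ( a - \epsilon , a + \epsilon ) $ and $ \lambda_2 C_t \cap ( c - \epsilon , c + \epsilon ) $ each fill more than half of their intervals for small $ \epsilon $. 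A short computation then gives $ \varphi_t ( a + c ) > 0 $, so $ K + L \subset \{ \varphi_t > 0 \} $. For compact $ K , L \subset \mathbb{ R } $, translate so that $ \max K = 0 = \min L $; then $ K \cup L \subset K + L $ with $ K \cap L = \{ 0 \} $, so $ | K + L | \geq | K | + | L | $. Letting the approximations exhaust the sets gives the claim, including when some measure is infinite.

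\emph{Integration.} For $ t \geq T $, the sets $ A_t $ and $ C_t $ are null by definition of $ T $. The layer-cake formula then gives
\begin{align}
    \| B \|_1 \geq \int_0^T | \{ B > t \} | \, dt \geq \lambda_1 \int_0^T | A_t | \, dt + \lambda_2 \int_0^T | C_t | \, dt = \lambda_1 \| f_1 \|_1 + \lambda_2 \| f_2 \|_1 .
\end{align}
The main obstacle is the level-set step, namely passing from ``positive-measure fibres'' to the one-dimensional Brunn--Minkowski inequality while correctly handling null sets. The density-point argument is what I would use there.
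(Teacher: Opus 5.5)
Your proof is correct. The paper gives no proof of this Fact and simply cites Brascamp--Lieb. Your argument is the standard level-set route behind that result: superlevel sets of $B$ as essential Minkowski sums, openness from lower semicontinuity of $\chi_{\lambda_1 A_t}*\chi_{\lambda_2 C_t}$, the one-dimensional Brunn--Minkowski inequality via density points, and the layer-cake formula. It therefore gives a self-contained proof, but you should state explicitly that the compact approximants $K$ and $L$ can be taken nonempty (indeed of positive measure). This is exactly where $|A_t|,|C_t|>0$ for $t<T$, and hence the hypothesis $\|f_1\|_\infty=\|f_2\|_\infty$, is used.
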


Dancs--Uhrin \cite[Theorem 2.1]{MR572660} prove a more general one-dimensional integral inequality than Fact~\ref{fact:Brascamp--Lieb} in a formulation with the ordinary supremum.
They also state without proof that their result can be extended to an arbitrary finite number of functions.
They further state that the same proof applies when the ordinary supremum is replaced by the essential supremum.
Equality cases for Fact~\ref{fact:Brascamp--Lieb} were given by Dubuc \cite[Th\'eor\`eme 9]{MR444863} and Rossi--Salani \cite{MR3977216}.
Rossi--Salani also studied stability near equality.

We also prove the following lemma, which will be needed in the proof of the minimum-type integral inequality for multiple functions (Lemma~\ref{lem:multifunction-minimum-integral}).

\begin{lemma}
    \label{lem:B esssup}

    Let $ f_1 , f_2 , T , \lambda $, and $ B $ be as in Fact~\ref{fact:Brascamp--Lieb}.
    Then
    \begin{align}
        \| B \|_\infty = T. \label{eq:B esssup claim}
    \end{align}
    
\end{lemma}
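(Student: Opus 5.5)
We prove the two inequalities $\| B \|_\infty \leq T$ and $\| B \|_\infty \geq T$ separately. Throughout we write $\phi_y ( x_2 ) \coloneqq ( y - \lambda_2 x_2 ) / \lambda_1$. For each fixed $y$ this is an affine bijection of $\mathbb{ R }$ with nonzero slope $-\lambda_2/\lambda_1$, so it maps null sets to null sets.

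\emph{Upper bound.} Let $N_1 \coloneqq \{ f_1 > T \}$, which is a null set by \eqref{eq:Brascamp--Lieb T definition}. Fix $y$. The set $\phi_y^{ -1 } ( N_1 )$ is null, and off this set
\begin{align}
\min \{ f_1 ( \phi_y ( x_2 ) ) , f_2 ( x_2 ) \} \leq f_1 ( \phi_y ( x_2 ) ) \leq T.
\end{align}
Hence $B ( y ) \leq T$ for every $y$, not merely almost every $y$, and so $\| B \|_\infty \leq T$. The same argument would work using $f_2$ instead of $f_1$.

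\emph{Lower bound.} If $T = 0$ there is nothing to prove, so assume $T > 0$ and fix $0 < s < T$. The sets
\begin{align}
E_1 \coloneqq \{ f_1 > s \}, \qquad E_2 \coloneqq \{ f_2 > s \}
\end{align}
have positive measure because $s < \| f_i \|_\infty$. For any $y$, we have $B ( y ) \geq s$ whenever the set
\begin{align}
\{ x_2 \in E_2 \mid \phi_y ( x_2 ) \in E_1 \}
\end{align}
has positive measure, because the minimum exceeds $s$ on this set. This set equals $E_2 \cap \phi_y^{ -1 } ( E_1 )$, and $\phi_y^{ -1 } ( E_1 ) = ( y - \lambda_1 E_1 ) / \lambda_2$. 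Its measure is therefore $| \{ x_2 \in E_2 \mid \lambda_2 x_2 \in y - \lambda_1 E_1 \} |$, which up to the factor $1 / \lambda_2$ is the convolution
\begin{align}
( \chi_{ \lambda_2 E_2 } * \chi_{ \lambda_1 E_1 } ) ( y ).
\end{align}
If $E_1$ or $E_2$ has infinite measure, we first replace it by a subset of positive finite measure. The convolution is then a nonnegative continuous function with integral $| \lambda_1 E_1 | \, | \lambda_2 E_2 | > 0$. So it is positive on a set of $y$ of positive measure, and on that set $B ( y ) \geq s$. This gives $\| B \|_\infty \geq s$. Letting $s \uparrow T$ yields $\| B \|_\infty \geq T$, including the case $T = \infty$.

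The main obstacle is this lower bound. The essential supremum defining $B$ means that ordinary pointwise coincidences are useless, so we must produce a set of $y$ of positive measure on which the fiber has positive measure. The convolution (Fubini) argument supplies exactly that; measurability of $B$ is already given by Fact~\ref{fact:Brascamp--Lieb}.
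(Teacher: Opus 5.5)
Your proof is correct and is essentially the paper's argument. The paper proves the chain $\| B \|_\infty > \tau \iff \iint \chi_{ \{ f_1 > \tau \} } \bigl( ( y - \lambda_2 x_2 ) / \lambda_1 \bigr) \chi_{ \{ f_2 > \tau \} } ( x_2 ) \, dx_2 \, dy > 0 \iff | \{ f_1 > \tau \} | > 0 \text{ and } | \{ f_2 > \tau \} | > 0 \iff T > \tau$ by Tonelli's theorem, and your convolution computation for the lower bound is exactly that Tonelli step; the only difference is that you obtain the upper bound by a direct pointwise argument, which gives the slightly stronger fact $ B ( y ) \leq T $ for every $ y $ (the form the paper later invokes in the proof of Lemma~\ref{lem:multifunction-minimum-integral}), whereas the paper reads it off from the reverse direction of the same equivalence.
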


\begin{proof}

For $ \tau \geq 0 $, the inequality $ \| B \|_\infty > \tau $ is equivalent to
\begin{align}
\int_{ \mathbb{ R } } \int_{ \mathbb{ R } } \chi_{ \{ f_1 > \tau \} } \left( \frac{ y - \lambda_2 x_2 }{ \lambda_1 } \right) \chi_{ \{ f_2 > \tau \} } ( x_2 ) \; dx_2 \; dy
> 0.
\end{align}
By Tonelli's theorem, this is in turn equivalent to
\begin{align}
    | \{ f_1 > \tau \} | > 0 , & &
    | \{ f_2 > \tau \} | > 0. \label{eq:B esssup proof level sets}
\end{align}
By \eqref{eq:Brascamp--Lieb T definition}, the condition \eqref{eq:B esssup proof level sets} is equivalent to $ T > \tau $.
Thus, \eqref{eq:B esssup claim} follows.
\end{proof}

\subsection{Minimum-type integral inequality for multiple functions}
\label{subsec:one dimension multifunction}

In this subsection, we prove the minimum-type integral inequality for multiple functions (Lemma~\ref{lem:multifunction-minimum-integral}) needed for the proof of Theorem~\ref{thm:main-BBL} in dimension one by using Fact~\ref{fact:Brascamp--Lieb}.

Dancs--Uhrin \cite{MR572660} state without proof that Fact~\ref{fact:Brascamp--Lieb} can be extended to an arbitrary finite number of functions by the same method.
We use a version of the corresponding multiple-function result for our application to Theorem~\ref{thm:main-BBL}.
Rather than generalizing the method of Brascamp--Lieb or Dancs--Uhrin directly, we prove it by induction on the number of functions by using Fact~\ref{fact:Brascamp--Lieb} as follows.

\begin{lemma}[Minimum-type integral inequality for multiple functions]
\label{lem:multifunction-minimum-integral}

Let 
\begin{align}
    f_1 , f_2 , \dots , f_m \colon \mathbb{ R } \to \mathbb{ R }_{ \geq 0 } 
\end{align}
be measurable functions satisfying
\begin{align}
    T \coloneqq \| f_1 \|_\infty = \| f_2 \|_\infty = \dots = \| f_m \|_\infty. \label{eq:multifunction-minimum-integral assume}
\end{align}
For any weight $ \lambda = ( \lambda_1 , \lambda_2 , \dots , \lambda_m ) $ and measurable function $ H \colon \mathbb{ R } \to \mathbb{ R }_{ \geq 0 } $ such that
\begin{align}
    M_{ - \infty }^\lambda ( f_1 ( x_1 ) , f_2 ( x_2 ) , \dots , f_m ( x_m ) )
    \leq H ( z_\lambda ( x ) ) \label{eq:multifunction-minimum-integral H large}
\end{align}
for almost every $ x = ( x_1 , x_2 , \dots , x_m ) \in \mathbb{ R }^m $, one has
\begin{align}
    \sum_{ i = 1 }^m \lambda_i \| f_i \|_1 \leq \| H \|_1. \label{eq:multifunction-minimum-integral claim}
\end{align}

\end{lemma}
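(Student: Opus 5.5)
I will argue by induction on $ m $, using Fact~\ref{fact:Brascamp--Lieb} as the inductive step and Lemma~\ref{lem:B esssup} to keep the equal-$ L^\infty $ hypothesis alive. For $ m = 1 $ the hypothesis reads $ f_1 \leq H $ a.e., so $ \| f_1 \|_1 \leq \| H \|_1 $ is immediate. (The case $ m = 2 $ will also follow from the scheme below.) Assume the statement for $ m - 1 $ functions with $ m \geq 2 $.

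\textbf{Reduction.} Set $ \mu \coloneqq \lambda_1 + \dots + \lambda_{ m - 1 } $ and let $ \lambda' = ( \lambda_1 / \mu , \dots , \lambda_{ m - 1 } / \mu ) $, which is a weight on $ m - 1 $ indices. Define
\begin{align}
    F ( w ) \coloneqq \esup_{ ( x_1 , \dots , x_{ m - 1 } ) \colon z_{ \lambda' } ( x_1 , \dots , x_{ m - 1 } ) = w } \min \{ f_1 ( x_1 ) , \dots , f_{ m - 1 } ( x_{ m - 1 } ) \}.
\end{align}
Rather than work with this essential supremum on hyperplanes directly, I will build it recursively. Let $ F_1 = f_1 $, and let $ F_{ k + 1 } $ be the function $ B $ of Fact~\ref{fact:Brascamp--Lieb} applied to $ F_k , f_{ k + 1 } $ with weight proportional to $ ( \lambda_1 + \dots + \lambda_k , \lambda_{ k + 1 } ) $.

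Lemma~\ref{lem:B esssup} gives $ \| F_k \|_\infty = T $ at every stage, so the hypothesis of Fact~\ref{fact:Brascamp--Lieb} continues to hold. Applying the fact at each stage gives
\begin{align}
    \left( \sum_{ i \leq k } \lambda_i \right) \| F_{ k + 1 } \|_1 \geq \sum_{ i \leq k + 1 } \lambda_i \| f_i \|_1
\end{align}
after normalization. In particular $ \sum_{ i = 1 }^m \lambda_i \| f_i \|_1 \leq \| F_m \|_1 $, with $ F_m $ the final iterate at the full weight $ \lambda $. The claim therefore reduces to showing $ F_m \leq H $ almost everywhere.

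\textbf{Main obstacle.} Proving $ F_m \leq H $ a.e. is the hard step. Pointwise this would be trivial, but everything here is only almost everywhere. The plan is to establish, for each level $ \tau $, that
\begin{align}
    \{ F_m > \tau \} \subset \{ H \geq \tau \}
\end{align}
up to a null set; intersecting over rational $ \tau $ then gives the a.e. comparison.

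For the level-set statement, unwind the recursion. By induction on $ k $, I will show that $ F_k ( y ) > \tau $ holds exactly when the set
\begin{align}
    \{ ( x_1 , \dots , x_k ) : z ( x ) = y , \ f_i ( x_i ) > \tau \text{ for all } i \}
\end{align}
has positive $ ( k - 1 ) $-dimensional measure on the affine hyperplane $ z ( x ) = y $. This uses Tonelli's theorem and the lower semicontinuity of each $ B $, which guarantees measurability.

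Now let $ N \subset \mathbb{ R }^m $ be the null set where \eqref{eq:multifunction-minimum-integral H large} fails. Use the linear change of variables $ ( x_1 , \dots , x_m ) \mapsto ( z_\lambda ( x ) , x_2 , \dots , x_m ) $, whose Jacobian is a nonzero constant. Fubini then shows that for a.e. $ y $ the slice $ N \cap \{ z_\lambda = y \} $ is null in the hyperplane. For such $ y $ with $ F_m ( y ) > \tau $, the positive-measure set above contains a point outside $ N $, and at that point $ \tau < \min_i f_i ( x_i ) \leq H ( y ) $. This yields $ H ( y ) \geq \tau $ for a.e. $ y \in \{ F_m > \tau \} $, hence $ F_m \leq H $ a.e. Integrating gives \eqref{eq:multifunction-minimum-integral claim}.
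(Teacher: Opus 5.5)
Your argument is correct apart from two small repairs (below), and the iterate you build is the one the paper builds. Unrolling the paper's induction gives exactly your $F_m$: merge $f_1,f_2$ into $B$, then merge $B$ with $f_3$ under weights proportional to $(\lambda_1+\lambda_2,\lambda_3)$, and so on.

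The difference is in how the almost-everywhere hypothesis is handled.

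\textbf{The paper's route.} It carries $H$ through a genuine induction on $m$. After one merge, a change of variables, Fubini, and the definition of $B$ as an essential supremum show that the shorter family $(B,f_3,\dots,f_{m+1})$ satisfies the same hypothesis against the same $H$. The induction hypothesis then finishes, so no description of the iterate is ever needed.

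\textbf{Your route.} You never invoke an induction hypothesis for the lemma; your induction on $m$ and the auxiliary $\lambda'$, $F$ are not used. Instead you chain Fact~\ref{fact:Brascamp--Lieb} to get $\sum_i\lambda_i\|f_i\|_1\le\|F_m\|_1$ with no reference to $H$. You then compare $F_m$ with $H$ once, via the identity that $F_k(y)>\tau$ iff $\{z=y\}\cap\prod_{i\le k}\{f_i>\tau\}$ has positive $(k-1)$-dimensional measure.
\begin{itemize}
\item That identity holds for every $y$, by induction on $k$ and Tonelli. The relevant sets are measurable because, parametrizing the hyperplane by $(x_2,\dots,x_k)$, $x_1$ is a surjective affine function of the parameters.
\item The exceptional set of $y$ in your final step comes only from slicing $N$, so it does not even depend on $\tau$.
\end{itemize}

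\textbf{Comparison.} Your route pays for this hyperplane bookkeeping. In exchange, it identifies $F_m$ intrinsically as the essential supremum of $\min_i f_i(x_i)$ over $\{z_\lambda=y\}$, which is the $m$-function version of the $F$ you first wrote down. In particular, $F_m$ does not depend on the order of merging. The same Fubini argument run in reverse shows that $F_m$ is itself an admissible $H$, hence the smallest one up to null sets.

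\textbf{Repairs.}
\begin{itemize}
\item The case $T=\infty$. Here the iterates may take the value $+\infty$: with equal weights and $f_1=f_2$ equal to $|x|^{-1/2}$ for $0<|x|\le 1$ and $0$ elsewhere, one gets $F_2(0)=\infty$. Fact~\ref{fact:Brascamp--Lieb} is stated for $\mathbb{R}_{\ge0}$-valued inputs, so feeding $F_2$ into it is not covered. Do what the paper does first:
  \begin{itemize}
  \item replace each $f_i$ by $\min\{f_i,n\}$; the hypothesis is preserved, and when $T=\infty$ all truncations have $L^\infty$ norm $n$;
  \item prove the bounded case;
  \item let $n\to\infty$ by monotone convergence.
  \end{itemize}
\item An off-by-one in your chained inequality. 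It should read $(\lambda_1+\dots+\lambda_{k+1})\|F_{k+1}\|_1\ge\sum_{i\le k+1}\lambda_i\|f_i\|_1$. With the coefficient $\lambda_1+\dots+\lambda_k$ it is false; take every $f_i=\chi_{[0,1]}$.
\end{itemize}
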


\begin{proof}

The case of $ T = \infty $ follows from the case of $ T < \infty $.
Indeed, we consider $ \min \{ f_i , \tau \} $ in place of each $ f_i $ for $ \tau > 0 $.
The monotone convergence theorem then yields the desired conclusion as $ \tau \to \infty $ by using the case of $ T < \infty $.
Thus, it suffices to assume $ T < \infty $.

We proceed by induction on $ m $.
For $ m = 1 $, integrating both sides of \eqref{eq:multifunction-minimum-integral H large} gives \eqref{eq:multifunction-minimum-integral claim}.

We assume that the assertion holds for any weight with $ m $ components and prove the case of $ m + 1 $.
For this case, we take a weight $ \lambda = ( \lambda_1 , \lambda_2 , \dots , \lambda_{ m + 1 } ) $ and
\begin{align}
T \coloneqq \| f_1 \|_\infty = \| f_2 \|_\infty = \dots = \| f_{ m + 1 } \|_\infty < \infty. \label{eq:multifunction-minimum-integral proof essentially sup}
\end{align}
We further assume that
\begin{align}
M_{ - \infty }^\lambda ( f_1 ( x_1 ) , f_2 ( x_2 ) , \dots , f_{ m + 1 } ( x_{ m + 1 } ) )
\leq H ( z_\lambda ( x ) ) \label{eq:multifunction-minimum-integral proof assume}
\end{align}
for almost every $ x = ( x_1 , x_2 , \dots , x_{ m + 1 } ) \in \mathbb{ R }^{ m + 1 } $.
It suffices to prove
\begin{align}
\sum_{ i = 1 }^{ m + 1 } \lambda_i \| f_i \|_1
\leq \| H \|_1. \label{eq:multifunction-minimum-integral proof conclude}
\end{align}
We use the notation
\begin{align}
\Lambda \coloneqq \lambda_1 + \lambda_2, & &
\widehat{ \lambda }
= ( \widehat{ \lambda_1 } , \widehat{ \lambda_2 } )
\coloneqq \left( \frac{ \lambda_1 }{ \Lambda } , \frac{ \lambda_2 }{ \Lambda } \right), & &
\widetilde{ \lambda } \coloneqq ( \Lambda , \lambda_3 , \lambda_4 , \dots , \lambda_{ m + 1 } ).
\end{align}
Then $ \widehat{ \lambda } $ and $ \widetilde{ \lambda } $ are weights.
By Fact~\ref{fact:Brascamp--Lieb}, the function
\begin{align}
    B ( y ) \coloneqq \esup_{ x_2 \in \mathbb{ R } } \min \left\{ f_1 \left( \frac{ y - \widehat{ \lambda_2 } x_2 }{ \widehat{ \lambda_1 } } \right) , f_2 ( x_2 ) \right\}
\end{align}
is measurable and satisfies
\begin{align}
    \| B \|_1
    \geq \widehat{ \lambda_1 } \| f_1 \|_1 + \widehat{ \lambda_2 } \| f_2 \|_1
    = \frac{ \lambda_1 \| f_1 \|_1 + \lambda_2 \| f_2 \|_1 }{ \Lambda }.
\end{align}
Thus, we have
\begin{align}
    \sum_{ i = 1 }^{ m + 1 } \lambda_i \| f_i \|_1
    = \lambda_1 \| f_1 \|_1 + \lambda_2 \| f_2 \|_1 + \sum_{ i = 3 }^{ m + 1 } \lambda_i \| f_i \|_1
    \leq \Lambda \| B \|_1 + \sum_{ i = 3 }^{ m + 1 } \lambda_i \| f_i \|_1. \label{eq:multifunction-minimum-integral proof use B}
\end{align}
Moreover, since Lemma~\ref{lem:B esssup} and \eqref{eq:multifunction-minimum-integral proof essentially sup} imply $ \| B \|_\infty = T $, we have
\begin{align}
    B ( y ) \leq \esup_{ x_2 \in \mathbb{ R } } f_2 ( x_2 ) = T < \infty
\end{align}
for any $ y \in \mathbb{ R } $.

Next, we prove
\begin{align}
    M_{ - \infty }^{ \widetilde{ \lambda } } ( B ( y ) , f_3 ( x_3 ) , f_4 ( x_4 ) , \dots , f_{ m + 1 } ( x_{ m + 1 } ) )
    \leq H ( z_{ \widetilde{ \lambda } } ( x' ) ) \label{eq:multifunction-minimum-integral proof compose}
\end{align}
for almost every $ x' = ( y , x_3 , x_4 , \dots , x_{ m + 1 } ) \in \mathbb{ R }^m $.
If $ y = \widehat{ \lambda_1 } x_1 + \widehat{ \lambda_2 } x_2 $, then $ x_1 = ( y - \widehat{ \lambda_2 } x_2 ) / \widehat{ \lambda_1 } $.
Thus, the linear transformation
\begin{align}
    ( x_1 , x_2 , x_3 , \dots , x_{ m + 1 } )
    \mapsto ( y , x_2 , x_3 , \dots , x_{ m + 1 } )
\end{align}
is invertible.
The identity $ \lambda_1 x_1 + \lambda_2 x_2 = \Lambda y $ holds.
Thus, it follows from \eqref{eq:multifunction-minimum-integral proof assume} that
\begin{align}
H ( z_{ \widetilde{ \lambda } } ( x' ) )
& = H \left( \Lambda y + \sum_{ i = 3 }^{ m + 1 } \lambda_i x_i \right) \\
& \geq M_{ - \infty }^\lambda \left( f_1 \left( \frac{ y - \widehat{ \lambda_2 } x_2 }{ \widehat{ \lambda_1 } } \right) , f_2 ( x_2 ) , f_3 ( x_3 ) , \dots , f_{ m + 1 } ( x_{ m + 1 } ) \right) \\
& = M_{ - \infty }^{ \widetilde{ \lambda } } \left( \min \left\{ f_1 \left( \frac{ y - \widehat{ \lambda_2 } x_2 }{ \widehat{ \lambda_1 } } \right) , f_2 ( x_2 ) \right\} , f_3 ( x_3 ) , \dots , f_{ m + 1 } ( x_{ m + 1 } ) \right)
\end{align}
for almost every $ ( x' , x_2 ) \in \mathbb{ R }^m \times \mathbb{ R } $.
Thus, Fubini's theorem and the definition of $ B $ imply that \eqref{eq:multifunction-minimum-integral proof compose} holds for almost every $ x' \in \mathbb{ R }^m $.
The induction hypothesis for $ m $ functions gives
\begin{align}
\Lambda \| B \|_1 + \sum_{ i = 3 }^{ m + 1 } \lambda_i \| f_i \|_1
\leq \| H \|_1.
\end{align}
By combining this with \eqref{eq:multifunction-minimum-integral proof use B}, we obtain \eqref{eq:multifunction-minimum-integral proof conclude}.
\end{proof}

\subsection{Completion of the proof of the main theorem in dimension one}
\label{subsec:one dimension proof}

In this subsection, we complete the proof of Theorem~\ref{thm:main-BBL} in dimension one by using Lemmas~\ref{lem:Holder}, \ref{lem:Holder infinity}, \ref{lem:integral inequality}, and \ref{lem:multifunction-minimum-integral}.
The assertion is immediate if $ K = 0 $, so we may assume $ K > 0 $.

It suffices to prove the assertion under the additional assumption that all the functions $ f_1 , f_2 , \dots , f_m $ are essentially bounded.
This reduction follows from the monotone convergence theorem and Lemma~\ref{lem:weighted power mean continuous}.
If necessary, we may assume without loss of generality that \eqref{eq:multifunction-minimum-integral assume} holds by replacing $ f_i $ and $ g_i $ by $ f_i / \| f_i \|_\infty $ and $ g_i / \| f_i \|_\infty $ for $ i = 1 , 2 , \dots , m $.

For
\begin{align}
    x = ( x_1 , x_2 , \dots , x_m ) \in S , & & z = z_\lambda ( x ),
\end{align}
Lemma~\ref{lem:Holder infinity} gives
\begin{align}
    L ( x ) M_{ - \infty }^\lambda ( f_1 ( x_1 ) , \dots , f_m ( x_m ) )
    \leq M_p^\lambda ( g_1 ( z ) , \dots , g_m ( z ) ).
\end{align}
We define $ H ( y ) \coloneqq M_p^\lambda ( g_1 ( y ) , \dots , g_m ( y ) ) / K $.
Then
\begin{align}
    M_{ - \infty }^\lambda ( f_1 ( x_1 ) , \dots , f_m ( x_m ) )
    \leq \frac{ M_p^\lambda ( g_1 ( z_\lambda ( x ) ) , \dots , g_m ( z_\lambda ( x ) ) ) }{ L ( x ) }
    \leq H ( z_\lambda ( x ) )
\end{align}
for almost every
\begin{align}
    x = ( x_1 , x_2 , \dots , x_m ) \in S.
\end{align}
If $ ( x_1 , x_2 , \dots , x_m ) \notin S $, then $ M_{ - \infty }^\lambda ( f_1 ( x_1 ) , f_2 ( x_2 ) , \dots , f_m ( x_m ) ) = 0 $.
Thus, the inequality \eqref{eq:multifunction-minimum-integral H large} holds for almost every $ x = ( x_1 , x_2 , \dots , x_m ) \in \mathbb{ R }^m $.
By Lemma~\ref{lem:multifunction-minimum-integral}, we have
\begin{align}
    M_1^\lambda ( \| f_1 \|_1 , \dots , \| f_m \|_1 )
    \leq \| H \|_1
    = \frac{ 1 }{ K } \int_{ \mathbb{ R } } M_p^\lambda ( g_1 ( y ) , \dots , g_m ( y ) ) \; dy.
\end{align}
Since $ - \infty \leq p \leq 1 $, Lemma~\ref{lem:integral inequality} gives
\begin{align}
    \int_{ \mathbb{ R } } M_p^\lambda ( g_1 ( y ) , \dots , g_m ( y ) ) \; dy
    \leq M_p^\lambda ( \| g_1 \|_1 , \dots , \| g_m \|_1 ),
\end{align}
and therefore
\begin{align}
K M_1^\lambda ( \| f_1 \|_1 , \dots , \| f_m \|_1 )
    \leq M_p^\lambda ( \| g_1 \|_1 , \dots , \| g_m \|_1 ). \label{eq:one dimension proof f g compare}
\end{align}
On the other hand, Lemma~\ref{lem:Holder} gives
\begin{align}
    M_p^\lambda ( \| g_1 \|_1 , \dots , \| g_m \|_1 )
    \leq M_1^\lambda ( \| f_1 \|_1 , \dots , \| f_m \|_1 ) M_{ Q_1 ( p ) }^\lambda \left( \frac{ \| g_1 \|_1 }{ \| f_1 \|_1 } , \dots , \frac{ \| g_m \|_1 }{ \| f_m \|_1 } \right). \label{eq:one dimension proof Holder}
\end{align}
Since $ \| f_i \|_1 > 0 $ for every $ i = 1 , 2 , \dots , m $, we have
\begin{align}
    M_1^\lambda ( \| f_1 \|_1 , \dots , \| f_m \|_1 ) > 0.
\end{align}
By \eqref{eq:one dimension proof f g compare} and \eqref{eq:one dimension proof Holder}, we obtain \eqref{eq:main-BBL-claim}.
Thus, Theorem~\ref{thm:main-BBL} is proved in dimension one.

\section{Proof of the main theorem in higher dimensions using slice integrals}
\label{sec:high dimension}

In this section, we prove the general case of Theorem~\ref{thm:main-BBL} using its one-dimensional case.
The proof proceeds by induction on the dimension.
We decompose Euclidean space as $ \mathbb{ R }^{ d + e } = \mathbb{ R }^d \times \mathbb{ R }^e $ and apply the results in dimensions $ d $ and $ e $ successively to the slice integrals
\begin{align}
    F_i ( \eta ) \coloneqq \int_{ \mathbb{ R }^d } f_i ( \xi , \eta ) \; d \xi .
\end{align}

For fixed $ \lambda $, $ d $, and $ p $, we denote by $ \mathsf{ P }_\lambda ( d , p ) $ the assertion of Theorem~\ref{thm:main-BBL} restricted to these parameters.
More precisely, for a weight $ \lambda = ( \lambda_1 , \lambda_2 , \dots , \lambda_m ) $, $ d = 1 , 2 , \dots $, and $ - \infty \leq p \leq 1 / d $, the statement $ \mathsf{ P }_\lambda ( d , p ) $ is the following.

\noindent
Statement $ \mathsf{ P }_\lambda ( d , p ) $:
Let $ f_1 , f_2 , \dots , f_m $ and $ S $ be as in Theorem~\ref{thm:multi input BBL}.
For any integrable functions $ g_1 , g_2 , \dots , g_m \colon \mathbb{ R }^d \to \mathbb{ R }_{ \geq 0 } $, we define $ L $ and $ K $ by \eqref{eq:main theorem essential inf}.
Then
\begin{align}
K
\leq M_{ Q_d ( p ) }^\lambda \left( \frac{ \| g_1 \|_1 }{ \| f_1 \|_1 } , \dots , \frac{ \| g_m \|_1 }{ \| f_m \|_1 } \right).
\end{align}

Then we have the following lemma.

\begin{lemma}[Dimension-additivity lemma]
    \label{lem:dimension-additivity}

    For any weight $ \lambda $ and
    \begin{align}
        d , e = 1 , 2 , \dots , & &
        - \infty \leq p \leq 1 / ( d + e ),
    \end{align}
    the statements $ \mathsf{ P }_\lambda ( d , p ) $ and $ \mathsf{ P }_\lambda ( e , Q_d ( p ) ) $ are well-defined, that is, $ p \leq 1 / d $ and $ Q_d ( p ) \leq 1 / e $.
    If both $ \mathsf{ P }_\lambda ( d , p ) $ and $ \mathsf{ P }_\lambda ( e , Q_d ( p ) ) $ hold, then $ \mathsf{ P }_\lambda ( d + e , p ) $ also holds.
    
\end{lemma}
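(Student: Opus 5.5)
Throughout, I write points of $ \mathbb{ R }^{ d + e } = \mathbb{ R }^d \times \mathbb{ R }^e $ as $ x_i = ( \xi_i , \eta_i ) $. Well-definedness is immediate from Lemma~\ref{lem:Q_d compose}: $ p \leq 1 / ( d + e ) $ gives $ p \leq 1 / d $ and $ Q_d ( p ) \leq 1 / e $.

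For the implication, take $ f_1 , \dots , f_m , g_1 , \dots , g_m $ on $ \mathbb{ R }^{ d + e } $ as in $ \mathsf{ P }_\lambda ( d + e , p ) $. Introduce the slice integrals
\begin{align}
    F_i ( \eta ) \coloneqq \int_{ \mathbb{ R }^d } f_i ( \xi , \eta ) \; d \xi , & &
    G_i ( \eta ) \coloneqq \int_{ \mathbb{ R }^d } g_i ( \xi , \eta ) \; d \xi .
\end{align}
By Tonelli these are measurable, with $ \| F_i \|_1 = \| f_i \|_1 \in ( 0 , \infty ) $ and $ \| G_i \|_1 = \| g_i \|_1 $. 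Since $ F_i $ and $ G_i $ are finite almost everywhere, I modify them on null sets (setting them to $ 0 $ where they are infinite) without changing any norm. We may also assume $ K > 0 $.

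\textbf{Step 1 (inner, $ d $-dimensional).} Note that $ z_\lambda ( x ) = ( z_\lambda ( \xi ) , z_\lambda ( \eta ) ) $, and that the $ \eta $-slice of $ S $ is the product of the positivity sets of the slice functions $ f_i ( \cdot , \eta_i ) $. The set $ \{ x \in S : L ( x ) < K \} $ is null in $ ( \mathbb{ R }^{ d + e } )^m $. By Fubini, for almost every $ \eta = ( \eta_1 , \dots , \eta_m ) $ its $ \eta $-slice is null in $ ( \mathbb{ R }^d )^m $. Now fix such an $ \eta $ with $ 0 < F_i ( \eta_i ) < \infty $ for all $ i $. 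Apply $ \mathsf{ P }_\lambda ( d , p ) $ to the inputs $ f_i ( \cdot , \eta_i ) $ and the outputs $ g_i ( \cdot , z_\lambda ( \eta ) ) $. The outputs are integrable for almost every $ \eta $, because $ \eta \mapsto z_\lambda ( \eta ) $ pushes null sets to null sets in the relevant sense: the preimage of a null set under this linear surjection is null. The essential infimum of the sliced $ L $ is at least $ K $, so
\begin{align}
    K \leq M_{ Q_d ( p ) }^\lambda \left( \frac{ G_1 ( z_\lambda ( \eta ) ) }{ F_1 ( \eta_1 ) } , \dots , \frac{ G_m ( z_\lambda ( \eta ) ) }{ F_m ( \eta_m ) } \right)
\end{align}
for almost every $ \eta \in S ( F_1 ) \times \dots \times S ( F_m ) $.

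\textbf{Step 2 (outer, $ e $-dimensional).} The inequality in Step 1 says precisely that the quantity $ K $ built from $ F_i , G_i $ with power $ Q_d ( p ) $ is at least $ K $. Then $ \mathsf{ P }_\lambda ( e , Q_d ( p ) ) $ gives
\begin{align}
    K \leq M_{ Q_e ( Q_d ( p ) ) }^\lambda \left( \frac{ \| G_i \|_1 }{ \| F_i \|_1 } \right)_i .
\end{align}
Finally, Lemma~\ref{lem:Q_d compose} turns $ Q_e ( Q_d ( p ) ) $ into $ Q_{ d + e } ( p ) $, and $ \| G_i \|_1 = \| g_i \|_1 $, $ \| F_i \|_1 = \| f_i \|_1 $.

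The main obstacle is the measure-theoretic bookkeeping in Step 1. I must justify that the essential-infimum hypothesis transfers to almost every slice. I must also check that the exceptional sets are harmless: those where $ F_i ( \eta_i ) = \infty $, and those where $ g_i ( \cdot , z_\lambda ( \eta ) ) $ is non-integrable. For the latter, I will use that $ \{ \eta : G_i ( z_\lambda ( \eta ) ) = \infty \} $ is null as the preimage of a null set under a surjective linear map. Further, I need that the positivity set of each slice function agrees with the $ \eta $-slice of $ S ( f_i ) $, and that $ S ( F_i ) $ is exactly where the slice has positive integral. Both follow from the definitions.
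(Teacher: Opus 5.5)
Your proposal is correct and follows the paper's proof essentially step for step. Both arguments form the slice integrals $F_i, G_i$ (set to $0$ on the null sets where they are infinite), use Fubini to pass $K \leq L$ to almost every slice, and apply $\mathsf{ P }_\lambda ( d , p )$ to the inputs $f_i ( \cdot , \eta_i )$ and outputs $g_i ( \cdot , z_\lambda ( \eta ) )$, whose integrability for almost every $\eta$ comes from null preimages under the linear surjection $z_\lambda$; they then apply $\mathsf{ P }_\lambda ( e , Q_d ( p ) )$ and Lemma~\ref{lem:Q_d compose}, and the measure-theoretic points you flag are exactly the ones the paper handles, in the same way.
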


\begin{proof}

The fact that $ \mathsf{ P }_\lambda ( d , p ) $ and $ \mathsf{ P }_\lambda ( e , Q_d ( p ) ) $ are well-defined follows from Lemma~\ref{lem:Q_d compose}.
Let $ f_1 , f_2 , \dots , f_m $ and $ g_1 , g_2 , \dots , g_m $ satisfy the hypotheses of $ \mathsf{ P }_\lambda ( d + e , p ) $.
It suffices to prove
\begin{align}
K
\leq M_{ Q_{ d + e } ( p ) }^\lambda \left( \frac{ \| g_1 \|_1 }{ \| f_1 \|_1 } , \dots , \frac{ \| g_m \|_1 }{ \| f_m \|_1 } \right). \label{eq:dimension-additivity proof conclude}
\end{align}
The assertion is immediate if $ K = 0 $, so we may assume $ K > 0 $.
Since $ f_i $ and $ g_i $ are integrable for every $ i = 1 , 2 , \dots , m $, Fubini's theorem implies that the slice-integral functions
\begin{align}
    F_i ( \eta ) \coloneqq \int_{ \mathbb{ R }^d } f_i ( \xi , \eta ) \; d \xi , & &
    G_i ( \eta ) \coloneqq \int_{ \mathbb{ R }^d } g_i ( \xi , \eta ) \; d \xi
\end{align}
from $ \mathbb{ R }^e $ to $ [ 0 , \infty ] $ are finite almost everywhere and satisfy
\begin{align}
    \| F_i \|_1 = \| f_i \|_1, & & \| G_i \|_1 = \| g_i \|_1. \label{eq:dimension-additivity proof same}
\end{align}
If necessary, by redefining $ F_i $ and $ G_i $ to be $ 0 $ on the null sets where they are not finite, we may regard them as functions $ F_i , G_i \colon \mathbb{ R }^e \to \mathbb{ R }_{ \geq 0 } $.
Define
\begin{align}
    \widetilde{ S } \coloneqq \{ ( u_1 , \dots , u_m , v_1 , \dots , v_m ) \in ( \mathbb{ R }^d )^m \times ( \mathbb{ R }^e )^m \mid ( u_1 , v_1 , u_2 , v_2 , \dots , u_m , v_m ) \in S \}.
\end{align}
We use the abbreviations
\begin{gather}
\begin{aligned}
    u = ( u_1 , u_2 , \dots , u_m ), & &
v = ( v_1 , v_2 , \dots , v_m ),
\end{aligned}
\\
    L ( u , v ) \coloneqq L ( ( u_1 , v_1 ) , ( u_2 , v_2 ) , \dots , ( u_m , v_m ) ).
\end{gather}
By \eqref{eq:main theorem essential inf}, we have $ K \leq L ( u , v ) $ for almost every $ ( u , v ) \in \widetilde{ S } $.
For $ v = ( v_1 , \dots , v_m ) \in ( \mathbb{ R }^e )^m $, we define
\begin{align}
    S_v \coloneqq \{ u \in ( \mathbb{ R }^d )^m \mid ( u , v ) \in \widetilde{ S } \}.
\end{align}
Then Fubini's theorem gives
\begin{align}
    K \leq \einf_{ u \in S_v } L ( u , v )
\end{align}
for almost every
\begin{align}
    v \in S_F 
    \coloneqq S ( F_1 ) \times S ( F_2 ) \times \dots \times S ( F_m ).
\end{align}
By Fubini's theorem, the function $ \xi \mapsto g_i ( \xi , \eta ) $ is integrable for almost every $ \eta $.
Moreover, since the linear map $ z_\lambda $ is surjective, the inverse image of a null set under $ z_\lambda $ is a null set.
Thus, the function $ \xi \mapsto g_i ( \xi , z_\lambda ( v ) ) $ is integrable for almost every $ v $.
By applying $ \mathsf{ P }_\lambda ( d , p ) $ to the input functions $ \xi \mapsto f_i ( \xi , v_i ) $ and output functions $ \xi \mapsto g_i ( \xi , z_\lambda ( v ) ) $, we have
\begin{align}
    K \leq M_{ Q_d ( p ) }^\lambda \left( \frac{ G_1 ( z_\lambda ( v ) ) }{ F_1 ( v_1 ) } , \dots , \frac{ G_m ( z_\lambda ( v ) ) }{ F_m ( v_m ) } \right)
\end{align}
for almost every $ v \in S_F $.
Since
\begin{align}
    K
    \leq \einf_{ v \in S_F } M_{ Q_d ( p ) }^\lambda \left( \frac{ G_1 ( z_\lambda ( v ) ) }{ F_1 ( v_1 ) } , \dots , \frac{ G_m ( z_\lambda ( v ) ) }{ F_m ( v_m ) } \right)
\end{align}
holds, we get
\begin{align}
    K
    \leq M_{ Q_e ( Q_d ( p ) ) }^\lambda \left( \frac{ \| G_1 \|_1 }{ \| F_1 \|_1 } , \dots , \frac{ \| G_m \|_1 }{ \| F_m \|_1 } \right)
\end{align}
by applying $ \mathsf{ P }_\lambda ( e , Q_d ( p ) ) $.
Lemma~\ref{lem:Q_d compose} and \eqref{eq:dimension-additivity proof same} give \eqref{eq:dimension-additivity proof conclude}, and hence we obtain $ \mathsf{ P }_\lambda ( d + e , p ) $.
\end{proof}

Lemma~\ref{lem:dimension-additivity} yields the following proof of Theorem~\ref{thm:main-BBL}.

\begin{proof}[Proof of Theorem~\ref{thm:main-BBL}]

    We proceed by induction on $ d $.
    The case $ d = 1 $ follows from Section~\ref{subsec:one dimension proof}.

    Assume the assertion holds in dimension $ d $ and consider dimension $ d + 1 $.
    Let $ \lambda $ be a weight and $ - \infty \leq p \leq 1 / ( d + 1 ) $.
    It suffices to prove $ \mathsf{ P }_\lambda ( d + 1 , p ) $.
    By Lemma~\ref{lem:dimension-additivity}, $ \mathsf{ P }_\lambda ( 1 , p ) $ and $ \mathsf{ P }_\lambda ( d , Q_1 ( p ) ) $ are well-defined.
    Section~\ref{subsec:one dimension proof} gives $ \mathsf{ P }_\lambda ( 1 , p ) $.
    Since $ \mathsf{ P }_\lambda ( d , Q_1 ( p ) ) $ also holds by the induction hypothesis, Lemma~\ref{lem:dimension-additivity} yields $ \mathsf{ P }_\lambda ( d + 1 , p ) $.
\end{proof}

\section{Equality examples and optimality of the main theorem}
\label{sec:equality}

In this section, we construct families of functions for which equality in Theorem~\ref{thm:main-BBL} holds in the case of proportional outputs.
We then use these equality examples to establish optimality of the multiplicative constant $ C $ in \eqref{eq:product best} and of the power $ Q_d ( p ) $ on the right-hand side in \eqref{eq:main-BBL-claim}.
For the two-function case of Theorem~\ref{thm:multi input BBL}, Dubuc \cite[Th\'eor\`emes $ B_n $ and 12]{MR444863} characterized the structure of functions in the equality case.
Rather than giving a complete classification of equality cases for Theorem~\ref{thm:main-BBL}, we construct explicit equality examples in the proportional-output case.
These examples also show that both the multiplicative constant $ C $ and the power $ Q_d ( p ) $ in Theorem~\ref{thm:main-BBL} are optimal.

In Section~\ref{subsec:equality product constant best}, we establish equality examples and thereby show that the optimal multiplicative constant is $ C = 1 $.
In Section~\ref{subsec:equality power best}, we prove optimality of the power $ Q_d ( p ) $ when $ m \geq 2 $.

\subsection{Equality examples and optimality of the multiplicative constant}
\label{subsec:equality product constant best}

We construct families of functions attaining equality in Theorem~\ref{thm:main-BBL} by using $ \alpha $-concave functions.
This construction shows that the multiplicative constant $ C = 1 $ is optimal.

\begin{definition}[$ \alpha $-concave function]

    For $ - \infty \leq \alpha \leq \infty $, a function $ \psi \colon \mathbb{ R }^d \to \mathbb{ R }_{ \geq 0 } $ is called $ \alpha $-concave if
    \begin{align}
        M_\alpha^{ \lambda^* } ( \psi ( x_1^* ) , \psi ( x_2^* ) , \dots , \psi ( x_m^* ) )
        \leq \psi ( z_{ \lambda^* } ( x^* ) )
    \end{align}
    holds for any weight $ \lambda^* = ( \lambda_1^* , \lambda_2^* , \dots , \lambda_m^* ) $ and
    \begin{align}
        x^* = ( x_1^* , x_2^* , \dots , x_m^* ) \in S ( \psi )^m. \label{eq:concave function support}
    \end{align}
    
\end{definition}

The following example shows that $ \alpha $-concave functions exist for any $ \alpha $.

\begin{example}
    \label{ex:convex set}

    For any $ - \infty \leq \alpha \leq \infty $, the indicator function $ \psi = \chi_D $ of a convex set $ D \subset \mathbb{ R }^d $ is $ \alpha $-concave.
    
\end{example}

In Example~\ref{ex:convex set}, if the convex set $ D $ has finite positive measure, then $ 0 < \| \psi \|_1 < \infty $.
We use this observation to construct equality examples for Theorem~\ref{thm:main-BBL}.

Next, we prove the following lemma.

\begin{lemma}
    \label{lem:gamma equality}

    Let $ \lambda = ( \lambda_1 , \lambda_2 , \dots , \lambda_m ) $ be a weight.
    For any $ - \infty < p \leq 1 / d $ and $ \gamma_1 , \gamma_2 , \dots , \gamma_m > 0 $, we set
    \begin{align}
    \gamma \coloneqq M_p^\lambda ( \gamma_1 , \gamma_2 , \dots , \gamma_m ). \label{eq:equality gamma definition}
\end{align}
Then
\begin{align}
        M_{ Q_d ( p ) }^\lambda ( \gamma_1^{ 1 - d p } , \gamma_2^{ 1 - d p } , \dots , \gamma_m^{ 1 - d p } )
        = \gamma^{ 1 - d p }. \label{eq:equality proof Q_d}
    \end{align}
    
\end{lemma}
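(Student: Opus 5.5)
The plan is a direct computation from Definition~\ref{def:weighted power mean definition}, split according to the value of $ p $. The key algebraic identity is that for $ - \infty < p < 1 / d $ with $ p \neq 0 $,
\begin{align}
    ( 1 - d p ) Q_d ( p ) = p .
\end{align}
Hence raising each $ \gamma_i^{ 1 - d p } $ to the power $ Q_d ( p ) $ gives $ \gamma_i^p $.

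\emph{Generic case.} Assume $ p \neq 0 $ and $ p < 1 / d $. Then $ Q_d ( p ) $ is finite and nonzero, and all entries are positive, so the ``otherwise'' clause of the definition applies. We compute
\begin{align}
    M_{ Q_d ( p ) }^\lambda ( \gamma_1^{ 1 - d p } , \dots , \gamma_m^{ 1 - d p } )
    = \left( \sum_{ i = 1 }^m \lambda_i \gamma_i^{ p } \right)^{ 1 / Q_d ( p ) }
    = \left( \sum_{ i = 1 }^m \lambda_i \gamma_i^{ p } \right)^{ ( 1 - d p ) / p }
    = \gamma^{ 1 - d p } .
\end{align}
The last equality uses $ \gamma = \left( \sum_{ i = 1 }^m \lambda_i \gamma_i^p \right)^{ 1 / p } $.

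\emph{Case $ p = 0 $.} Here $ Q_d ( 0 ) = 0 $ and $ 1 - d p = 1 $. Both sides reduce to $ \prod_{ i = 1 }^m \gamma_i^{ \lambda_i } $, since by definition $ \gamma = M_0^\lambda ( \gamma_1 , \dots , \gamma_m ) $ is exactly this product.

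\emph{Case $ p = 1 / d $.} Here $ 1 - d p = 0 $ and $ Q_d ( 1 / d ) = \infty $. The left-hand side becomes $ M_\infty^\lambda ( 1 , \dots , 1 ) = 1 $, and the right-hand side is $ \gamma^0 = 1 $, which is well defined because $ \gamma > 0 $.

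As a consistency check, the $ p = 0 $ case also follows from the generic one by letting $ p \to 0 $ and using Lemmas~\ref{lem:weighted power mean continuous} and \ref{lem:Q_d continuous}. I would nonetheless do it directly for clarity.

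The only point requiring care is making sure each side falls into the correct clause of the definition. Positivity of the $ \gamma_i $ rules out the zero clause, and the endpoint $ p = 1 / d $ must be treated separately so that no exponent $ 1 / Q_d ( p ) $ is formally infinite. I do not expect any real obstacle.
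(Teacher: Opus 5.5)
Your proof is correct, and your generic case ($p \neq 0$, $p < 1/d$) is exactly the paper's computation via $(1-dp)\,Q_d(p) = p$. The only difference is that you check $p = 0$ and $p = 1/d$ directly from the definition (both sides equal $\prod_{i} \gamma_i^{\lambda_i}$, and $M_\infty^\lambda(1,\dots,1) = 1 = \gamma^0$, respectively), whereas the paper gets these cases as limits from the joint continuity of $(p,a) \mapsto M_p^\lambda(a)$ and the continuity of $Q_d$; your direct check is more elementary and equally valid.
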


\begin{proof}

If $ p \neq 0 , 1 / d $, then
    \begin{align}
        M_{ Q_d ( p ) }^\lambda ( \gamma_1^{ 1 - d p } , \gamma_2^{ 1 - d p } , \dots , \gamma_m^{ 1 - d p } )
        & = \left( \sum_{ i = 1 }^m \lambda_i \gamma_i^p \right)^{ 1 / Q_d ( p ) } \\
        & = M_p^\lambda ( \gamma_1 , \gamma_2 , \dots , \gamma_m )^{ 1 - d p } \\
        & = \gamma^{ 1 - d p }.
    \end{align}
By Lemmas~\ref{lem:weighted power mean continuous} and \ref{lem:Q_d continuous}, we also obtain \eqref{eq:equality proof Q_d} for $ p = 0 $ and $ p = 1 / d $.
\end{proof}

By using Lemma~\ref{lem:gamma equality}, we now construct equality examples for Theorem~\ref{thm:main-BBL} in the range $ - \infty < p \leq 1 / d $.

\begin{proposition}
\label{prop:equality}

Let $ d $ and $ \lambda $ be as in Theorem~\ref{thm:main-BBL}, and $ p $, $ \gamma_1 , \gamma_2 , \dots , \gamma_m $, and $ \gamma $ be as in Lemma~\ref{lem:gamma equality}.
We suppose that
\begin{align}
\kappa_1 , \kappa_2 , \dots , \kappa_m > 0, & &
\omega \geq 0, & &
\theta = ( \theta_1 , \theta_2 , \dots , \theta_m ) \in ( \mathbb{ R }^d )^m,
\end{align}
and that $ \psi \colon \mathbb{ R }^d \to \mathbb{ R }_{ \geq 0 } $ is an integrable $ ( - p ) $-concave function with $ 0 < \| \psi \|_1 < \infty $.
For $ i = 1 , 2 , \dots , m $, we define the functions $ f_i $ and $ g_i $ by
\begin{align}
f_i ( x ) \coloneqq \kappa_i \psi \left( \frac{ x - \theta_i }{ \gamma_i^p } \right), & &
g_i ( x ) \coloneqq \omega \kappa_i \gamma_i \psi \left( \frac{ x - z_\lambda ( \theta ) }{ \gamma^p } \right). \label{eq:equality function}
\end{align}
The set $ S $ is defined by \eqref{eq:multi input BBL S definition}, and $ L $ and $ K $ are defined by \eqref{eq:main theorem essential inf}.
Then
\begin{align}
K
= M_{ Q_d ( p ) }^\lambda \left( \frac{ \| g_1 \|_1 }{ \| f_1 \|_1 } , \frac{ \| g_2 \|_1 }{ \| f_2 \|_1 } , \dots , \frac{ \| g_m \|_1 }{ \| f_m \|_1 } \right)
= \omega \gamma. \label{eq:equality value}
\end{align}
In particular, equality holds in \eqref{eq:main-BBL-claim}.

\end{proposition}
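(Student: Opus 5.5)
The plan is to compute the right-hand side of \eqref{eq:equality value} directly, and then to prove $ K \geq \omega \gamma $ pointwise on $ S $. The reverse inequality $ K \leq \omega \gamma $ then comes for free from Theorem~\ref{thm:main-BBL}. If $ \omega = 0 $, then every $ g_i $ vanishes, so both sides are $ 0 $; hence we assume $ \omega > 0 $.

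\emph{Step 1: the norms.} The change of variables $ x = \theta_i + \gamma_i^p y $ gives $ \| f_i \|_1 = \kappa_i \gamma_i^{ d p } \| \psi \|_1 $. Similarly, $ \| g_i \|_1 = \omega \kappa_i \gamma_i \gamma^{ d p } \| \psi \|_1 $. Hence
\begin{align}
\frac{ \| g_i \|_1 }{ \| f_i \|_1 } = \omega \gamma^{ d p } \gamma_i^{ 1 - d p }.
\end{align}
By positive homogeneity (Fact~\ref{fact:weighted power mean properties} \ref{item:weighted power mean properties homogeneity}) and Lemma~\ref{lem:gamma equality}, the right-hand side equals $ \omega \gamma^{ d p } \gamma^{ 1 - d p } = \omega \gamma $. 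Theorem~\ref{thm:main-BBL} then gives $ K \leq \omega \gamma $.

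\emph{Step 2: the pointwise lower bound.} Fix $ x \in S $ and put $ y_i \coloneqq ( x_i - \theta_i ) / \gamma_i^p $, so that $ y_i \in S ( \psi ) $. Define
\begin{align}
\mu_i \coloneqq \frac{ \lambda_i \gamma_i^p }{ \gamma^p }.
\end{align}
For $ p \neq 0 $ we have $ \gamma^p = \sum_i \lambda_i \gamma_i^p $, and for $ p = 0 $ all $ \gamma_i^p = \gamma^p = 1 $. In either case $ \mu = ( \mu_1 , \dots , \mu_m ) $ is a weight. Moreover,
\begin{align}
w \coloneqq \frac{ z_\lambda ( x ) - z_\lambda ( \theta ) }{ \gamma^p } = \sum_{ i = 1 }^m \mu_i y_i = z_\mu ( y ).
\end{align}
Since $ \psi $ is $ ( - p ) $-concave and $ y \in S ( \psi )^m $, we get
\begin{align}
\psi ( w ) \geq M_{ - p }^\mu ( \psi ( y_1 ) , \dots , \psi ( y_m ) ) > 0.
\end{align}
The positivity holds because all $ \psi ( y_i ) > 0 $.

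\emph{Step 3: evaluating $ L ( x ) $.} Write $ t = \psi ( w ) $. Then
\begin{align}
L ( x ) = \omega M_p^\lambda \left( \frac{ \gamma_1 t }{ \psi ( y_1 ) } , \dots , \frac{ \gamma_m t }{ \psi ( y_m ) } \right).
\end{align}
For $ p \neq 0 $, expanding the power mean gives
\begin{align}
\left( \sum_{ i = 1 }^m \lambda_i \gamma_i^p t^p \psi ( y_i )^{ - p } \right)^{ 1 / p }
= \gamma t \left( \sum_{ i = 1 }^m \mu_i \psi ( y_i )^{ - p } \right)^{ 1 / p }
= \frac{ \gamma t }{ M_{ - p }^\mu ( \psi ( y_1 ) , \dots , \psi ( y_m ) ) }.
\end{align}
This is an exact identity, so the sign of $ p $ causes no trouble. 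For $ p = 0 $, the analogous product computation gives the same formula with $ \mu = \lambda $. Combining this identity with Step 2 yields $ L ( x ) \geq \omega \gamma $ for every $ x \in S $. Therefore $ K \geq \omega \gamma $, and equality follows.

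The main point requiring care is Step 2: checking that the rescaled weights $ \mu $ form a genuine weight and that $ ( - p ) $-concavity applies with the weight $ \mu $ rather than $ \lambda $. This is exactly where the choice $ \gamma = M_p^\lambda ( \gamma_1 , \dots , \gamma_m ) $ is used. The remaining work is the degenerate bookkeeping: the cases $ p = 0 $ and $ p = 1 / d $ (via the continuity built into Lemma~\ref{lem:gamma equality}), and ensuring that $ \psi ( w ) > 0 $ so the ratios are finite.
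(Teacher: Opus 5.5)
Your proposal is correct and follows essentially the same route as the paper. You compute the norm ratios $\omega \gamma^{dp} \gamma_i^{1-dp}$ via homogeneity and Lemma~\ref{lem:gamma equality}, introduce the rescaled weight $\mu_i = \lambda_i \gamma_i^p / \gamma^p$ (the paper's $\lambda^*$), apply $(-p)$-concavity to get $L(x) \geq \omega\gamma$ pointwise on $S$, and close with Theorem~\ref{thm:main-BBL}. The differences are cosmetic: you carry $\kappa_i$, $\omega$, $\theta$ through instead of normalizing them away, and you write out the identity $M_p^\lambda(\gamma_i t/\psi(y_i)) = \gamma t / M_{-p}^{\mu}(\psi(y_1),\dots,\psi(y_m))$ separately for $p \neq 0$ and $p = 0$, where the paper simply cites the definition of $M_p^\lambda$.
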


\begin{proof}

If $ \omega = 0 $, then
\begin{align}
        K
        = M_{ Q_d ( p ) }^\lambda \left( \frac{ \| g_1 \|_1 }{ \| f_1 \|_1 } , \frac{ \| g_2 \|_1 }{ \| f_2 \|_1 } , \dots , \frac{ \| g_m \|_1 }{ \| f_m \|_1 } \right)
        = \omega \gamma
        = 0.
\end{align}
Thus, it suffices to consider $ \omega > 0 $.
We may assume without loss of generality that
\begin{align}
    \kappa_1 = \kappa_2 = \dots = \kappa_m = \omega = 1, & &
    \theta = ( 0 , 0 , \dots , 0 ). \label{eq:equality proof unit}
\end{align}
Indeed, we may replace $ f_i $ and $ g_i $ by
\begin{align}
    \frac{ f_i ( x + \theta_i ) }{ \kappa_i }, & &
    \frac{ g_i ( x + z_\lambda ( \theta ) ) }{ \omega \kappa_i }, \label{eq:equality proof change}
\end{align}
for $ i = 1 , 2 , \dots , m $.
Then, the parameters $ \kappa_i $ and $ \theta_i $ do not affect \eqref{eq:equality value}, and the normalization in $ \omega $ multiplies both sides of \eqref{eq:equality value} by $ 1 / \omega $.

We first prove $ K \geq \gamma $.
It suffices to show $ L ( x ) \geq \gamma $ for any $ x = ( x_1 , \dots , x_m ) \in S $.
By \eqref{eq:equality function}, we have
\begin{align}
        \frac{ g_i ( z_\lambda ( x ) ) }{ f_i ( x_i ) }
        = \gamma_i \psi \left( \frac{ z_\lambda ( x ) }{ \gamma^p } \right) \psi \left( \frac{ x_i }{ \gamma_i^p } \right)^{ - 1 } \label{eq:equality proof function ratio}
\end{align}
for any $ i = 1 , 2 , \dots , m $.
It follows from \eqref{eq:equality gamma definition} that
\begin{align}
        \lambda^* 
        = ( \lambda_1^* , \lambda_2^* , \dots , \lambda_m^* )
        \coloneqq \left( \lambda_1 \left( \frac{ \gamma_1 }{ \gamma } \right)^p , \lambda_2 \left( \frac{ \gamma_2 }{ \gamma } \right)^p , \dots , \lambda_m \left( \frac{ \gamma_m }{ \gamma } \right)^p \right)
\end{align}
is a weight.
Since $ x \in S $, we have $ x_i \in S ( f_i ) $ for any $ i = 1 , 2 , \dots , m $.
Thus, we have
\begin{align}
        x_i^*
        \coloneqq \frac{ x_i }{ \gamma_i^p }
        \in S ( \psi ),
\end{align}
and hence \eqref{eq:concave function support} holds.
Furthermore, we have
\begin{align}
        \frac{ z_\lambda ( x ) }{ \gamma^p }
        = \sum_{ i = 1 }^m \frac{ \lambda_i x_i }{ \gamma^p }
        = \sum_{ i = 1 }^m \lambda_i^* x_i^*
        = z_{ \lambda^* } ( x^* ).
\end{align}
Thus, the equality \eqref{eq:equality proof function ratio} gives
\begin{align}
        L ( x )
        = M_p^\lambda \left( \frac{ \gamma_1 \psi ( z_{ \lambda^* } ( x^* ) ) }{ \psi ( x_1^* ) } , \frac{ \gamma_2 \psi ( z_{ \lambda^* } ( x^* ) ) }{ \psi ( x_2^* ) } , \dots , \frac{ \gamma_m \psi ( z_{ \lambda^* } ( x^* ) ) }{ \psi ( x_m^* ) } \right).
\end{align}
By Fact~\ref{fact:weighted power mean properties} \ref{item:weighted power mean properties homogeneity}, we have
\begin{align}
        L ( x )
        = \psi ( z_{ \lambda^* } ( x^* ) ) M_p^\lambda \left( \frac{ \gamma_1 }{ \psi ( x_1^* ) } , \frac{ \gamma_2 }{ \psi ( x_2^* ) } , \dots , \frac{ \gamma_m }{ \psi ( x_m^* ) } \right).
\end{align}
Then
\begin{align}
        M_p^\lambda \left( \frac{ \gamma_1 }{ \psi ( x_1^* ) } , \frac{ \gamma_2 }{ \psi ( x_2^* ) } , \dots , \frac{ \gamma_m }{ \psi ( x_m^* ) } \right)
        & = \frac{ \gamma }{ M_{ - p }^{ \lambda^* } \left( \psi ( x_1^* ) , \psi ( x_2^* ) , \dots , \psi ( x_m^* ) \right) }
\end{align}
holds by the definition of $ M_p^\lambda $.
Since $ \psi $ is $ ( - p ) $-concave, it follows that $ L ( x ) \geq \gamma $.
Thus, we obtain $ K \geq \gamma $.

Next, we prove
\begin{align}
        M_{ Q_d ( p ) }^\lambda \left( \frac{ \| g_1 \|_1 }{ \| f_1 \|_1 } , \frac{ \| g_2 \|_1 }{ \| f_2 \|_1 } , \dots , \frac{ \| g_m \|_1 }{ \| f_m \|_1 } \right)
        = \gamma. \label{eq:equality proof goal}
\end{align}
Since
\begin{align}
        \| f_i \|_1 = \gamma_i^{ d p } \| \psi \|_1, & &
        \| g_i \|_1 = \gamma_i \gamma^{ d p } \| \psi \|_1, & &
        \frac{ \| g_i \|_1 }{ \| f_i \|_1 }
        = \gamma^{ d p } \gamma_i^{ 1 - d p } \label{eq:equality proof L1 norm}
\end{align}
hold by \eqref{eq:equality function}, we have
\begin{align}
        M_{ Q_d ( p ) }^\lambda \left( \frac{ \| g_1 \|_1 }{ \| f_1 \|_1 } , \frac{ \| g_2 \|_1 }{ \| f_2 \|_1 } , \dots , \frac{ \| g_m \|_1 }{ \| f_m \|_1 } \right)
        & = M_{ Q_d ( p ) }^\lambda ( \gamma^{ d p } \gamma_1^{ 1 - d p } , \gamma^{ d p } \gamma_2^{ 1 - d p } , \dots , \gamma^{ d p } \gamma_m^{ 1 - d p } ) \\
        & = \gamma^{ d p } M_{ Q_d ( p ) }^\lambda ( \gamma_1^{ 1 - d p } , \gamma_2^{ 1 - d p } , \dots , \gamma_m^{ 1 - d p } )
\end{align}
by Fact~\ref{fact:weighted power mean properties} \ref{item:weighted power mean properties homogeneity}.
Thus, Lemma~\ref{lem:gamma equality} yields \eqref{eq:equality proof goal}.

Finally, Theorem~\ref{thm:main-BBL} gives
\begin{align}
        K
        \leq M_{ Q_d ( p ) }^\lambda \left( \frac{ \| g_1 \|_1 }{ \| f_1 \|_1 } , \frac{ \| g_2 \|_1 }{ \| f_2 \|_1 } , \dots , \frac{ \| g_m \|_1 }{ \| f_m \|_1 } \right)
        = \gamma
        \leq K.
\end{align}
Thus, we obtain \eqref{eq:equality value}.
In particular, equality holds in \eqref{eq:main-BBL-claim}.
\end{proof}

We next give equality examples for $ p = - \infty $.

\begin{proposition}
    \label{prop:equality minus infinity}

Let $ p = - \infty $, and $ d $, $ \lambda $, $ \kappa_1 , \kappa_2 , \dots , \kappa_m $, $ \omega $, and $ \theta $ be as in Proposition~\ref{prop:equality}.
We suppose that
\begin{align}
    s_1 , s_2 , \dots , s_m > 0, & &
    s \coloneqq M_1^\lambda ( s_1 , s_2 , \dots , s_m ), \label{eq:equality minus infinity s}
\end{align}
and that $ D \subset \mathbb{ R }^d $ is a convex set of finite positive measure.
For $ i = 1 , 2 , \dots , m $, we define the functions $ f_i $ and $ g_i $ by
\begin{align}
    f_i ( x ) \coloneqq \kappa_i \chi_D \left( \frac{ x - \theta_i }{ s_i } \right), & &
    g_i ( x ) \coloneqq \omega \kappa_i \chi_D \left( \frac{ x - z_\lambda ( \theta ) }{ s } \right). \label{eq:equality minus infinity function}
\end{align}
The set $ S $ is defined by \eqref{eq:multi input BBL S definition}, and $ L $ and $ K $ are defined by \eqref{eq:main theorem essential inf}.
Then
\begin{align}
    K
    = M_{ Q_d ( p ) }^\lambda \left( \frac{ \| g_1 \|_1 }{ \| f_1 \|_1 } , \frac{ \| g_2 \|_1 }{ \| f_2 \|_1 } , \dots , \frac{ \| g_m \|_1 }{ \| f_m \|_1 } \right)
    = \omega. \label{eq:equality minus infinity value}
\end{align}
In particular, equality holds in \eqref{eq:main-BBL-claim}.

\end{proposition}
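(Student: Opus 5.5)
The plan mirrors the proof of Proposition~\ref{prop:equality}: prove a lower bound $K \geq \omega$ directly, compute the right-hand side of \eqref{eq:equality minus infinity value} exactly as $\omega$, and then close the chain with Theorem~\ref{thm:main-BBL}. If $\omega = 0$, both sides vanish, so we assume $\omega > 0$. By the same change of variables \eqref{eq:equality proof change}, we normalize $\kappa_1 = \dots = \kappa_m = \omega = 1$ and $\theta = 0$. Then $f_i = \chi_{ s_i D }$ and $g_i = \chi_{ s D }$, so all outputs coincide.

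For the lower bound on $K$, take $x = ( x_1 , \dots , x_m ) \in S$, so that $x_i / s_i \in D$ for each $i$. Set $\mu_i \coloneqq \lambda_i s_i / s$. By \eqref{eq:equality minus infinity s}, $\mu = ( \mu_1 , \dots , \mu_m )$ is a weight. Writing $z_\lambda ( x ) / s = \sum_i \mu_i ( x_i / s_i )$ exhibits this point as a convex combination of points of $D$, so it lies in $D$ by convexity. Hence $g_i ( z_\lambda ( x ) ) = 1 = f_i ( x_i )$ for every $i$, which gives $L ( x ) = M_{ - \infty }^\lambda ( 1 , \dots , 1 ) = 1$ for every $x \in S$. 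Thus $K \geq 1$. (Equivalently, one can invoke Example~\ref{ex:convex set}, since $\chi_D$ is $\infty$-concave, together with the weight $\mu$.)

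For the right-hand side, compute $\| f_i \|_1 = s_i^d | D |$ and $\| g_i \|_1 = s^d | D |$, so $\| g_i \|_1 / \| f_i \|_1 = ( s / s_i )^d$. Since $Q_d ( - \infty ) = - 1 / d$, we need $M_{ - 1 / d }^\lambda$ of these ratios. All entries are positive, so the definition gives
\begin{align}
    \left( \sum_{ i = 1 }^m \lambda_i \left( \frac{ s }{ s_i } \right)^{ - 1 } \right)^{ - d }
    = \left( \frac{ 1 }{ s } \sum_{ i = 1 }^m \lambda_i s_i \right)^{ - d }
    = 1 ,
\end{align}
where the last step uses $s = M_1^\lambda ( s_1 , \dots , s_m )$.

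Finally, Theorem~\ref{thm:main-BBL} gives $K \leq M_{ Q_d ( - \infty ) }^\lambda ( \dots ) = 1 \leq K$, so every quantity equals $1$; undoing the normalization yields $\omega$. The only mildly delicate point is the reduction in the first step, namely checking that translating and rescaling by $\kappa_i$ and $\omega$ leave $L$ and $K$ unchanged up to the factor $\omega$. This goes exactly as in Proposition~\ref{prop:equality}, so no real obstacle is expected.
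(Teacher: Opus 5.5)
Your proof is correct and follows the paper's argument: you normalize, use convexity of $D$ with the weight $(\lambda_i s_i / s)_i$ to get $L \equiv 1$ on $S$, and compute $M_{-1/d}^\lambda$ of the ratios $(s/s_i)^d$ to be $1$. The only difference is that the paper reads off $K = 1$ directly from $L \equiv 1$ on the positive-measure set $S$, so your final appeal to Theorem~\ref{thm:main-BBL} for the upper bound is harmless but unnecessary.
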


\begin{proof}

If $ \omega = 0 $, then
\begin{align}
        K
        = M_{ Q_d ( p ) }^\lambda \left( \frac{ \| g_1 \|_1 }{ \| f_1 \|_1 } , \frac{ \| g_2 \|_1 }{ \| f_2 \|_1 } , \dots , \frac{ \| g_m \|_1 }{ \| f_m \|_1 } \right)
        = \omega
        = 0.
\end{align}
Thus, it suffices to consider $ \omega > 0 $.
If necessary, we may assume without loss of generality that
\begin{align}
    \kappa_1 = \kappa_2 = \dots = \kappa_m = \omega = 1, & &
    \theta = ( 0 , 0 , \dots , 0 )
\end{align}
by replacing $ f_i $ and $ g_i $ as in \eqref{eq:equality proof change} for $ i = 1 , 2 , \dots , m $.

We first show that $ K = 1 $.
Let $ x = ( x_1 , x_2 , \dots , x_m ) \in S $.
Then $ x_i / s_i \in D $ for any $ i = 1 , 2 , \dots , m $.
Since
\begin{align}
    \sum_{ i = 1 }^m \frac{ \lambda_i s_i }{ s } = 1
\end{align}
holds by \eqref{eq:equality minus infinity s}, we have
\begin{align}
    \frac{ z_\lambda ( x ) }{ s }
    = \sum_{ i = 1 }^m \frac{ \lambda_i s_i }{ s } \cdot \frac{ x_i }{ s_i } \in D
\end{align}
by convexity of $ D $.
Thus, we get $ g_i ( z_\lambda ( x ) ) = 1 $, and hence
\begin{align}
        L ( x ) = M_{ - \infty }^\lambda ( 1 , 1 , \dots , 1 )
        = 1
\end{align}
holds for almost every $ x \in S $.
Therefore, we obtain $ K = 1 $.

Next, we show
\begin{align}
    M_{ Q_d ( p ) }^\lambda \left( \frac{ \| g_1 \|_1 }{ \| f_1 \|_1 } , \frac{ \| g_2 \|_1 }{ \| f_2 \|_1 } , \dots , \frac{ \| g_m \|_1 }{ \| f_m \|_1 } \right)
        = 1.
\end{align}
By \eqref{eq:equality minus infinity function}, we have
\begin{align}
        \| f_i \|_1 = s_i^d | D | , & &
        \| g_i \|_1 = s^d | D | , & &
        \frac{ \| g_i \|_1 }{ \| f_i \|_1 }
        = \left( \frac{ s }{ s_i } \right)^d, \label{eq:equality minus infinity proof L1 norm}
\end{align}
and
\begin{align}
        Q_d ( p ) = Q_d ( - \infty ) = - \frac{ 1 }{ d }.
\end{align}
Thus, it follows from \eqref{eq:equality minus infinity s} that
\begin{align}
    M_{ Q_d ( p ) }^\lambda \left( \frac{ \| g_1 \|_1 }{ \| f_1 \|_1 } , \frac{ \| g_2 \|_1 }{ \| f_2 \|_1 } , \dots , \frac{ \| g_m \|_1 }{ \| f_m \|_1 } \right)
    = \left( \sum_{ i = 1 }^m \frac{ \lambda_i s_i }{ s } \right)^{ - d }
    = 1.
\end{align}
This proves \eqref{eq:equality minus infinity value}.
Thus, equality holds in \eqref{eq:main-BBL-claim}.
\end{proof}

Propositions~\ref{prop:equality} and \ref{prop:equality minus infinity} show that, for fixed $ d , p , \lambda $, the least constant $ C $ for which \eqref{eq:product best} holds for any family of functions is $ C = 1 $.
Thus, the multiplicative constant $ 1 $ in Theorem~\ref{thm:main-BBL} is optimal.

\subsection{Optimality of the power}
\label{subsec:equality power best}

In this subsection, we prove optimality of the power $ Q_d ( p ) $ when $ m \geq 2 $.
More precisely, we prove the following proposition.

\begin{proposition}[Optimality of the power]
\label{prop:power best}

Let $ m \geq 2 $, and suppose that $ d $, $ \lambda $, and $ p $ are as in Theorem~\ref{thm:main-BBL}.
We fix $ - \infty \leq \widetilde{ Q } < Q_d ( p ) $.
Then there exist integrable functions $ f_1 , f_2 , \dots , f_m , g_1 , g_2 , \dots , g_m \colon \mathbb{ R }^d \to \mathbb{ R }_{ \geq 0 } $ satisfying $ 0 < \| f_i \|_1 < \infty $ for every $ i = 1 , 2 , \dots , m $ such that
\begin{align}
K > M_{ \widetilde{ Q } }^\lambda \left( \frac{ \| g_1 \|_1 }{ \| f_1 \|_1 } , \frac{ \| g_2 \|_1 }{ \| f_2 \|_1 } , \dots , \frac{ \| g_m \|_1 }{ \| f_m \|_1 } \right), \label{eq:power best claim}
\end{align}
with $ S $ defined by \eqref{eq:multi input BBL S definition} and $ L $ and $ K $ defined by \eqref{eq:main theorem essential inf}.

\end{proposition}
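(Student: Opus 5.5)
We will use the equality examples of Propositions~\ref{prop:equality} and \ref{prop:equality minus infinity} with a non-constant vector of ratios. Once equality holds in \eqref{eq:main-BBL-claim} and the ratio vector is non-constant with positive entries, strict monotonicity in the power (Fact~\ref{fact:weighted power mean properties} \ref{item:weighted power mean properties power monotone strict}) gives
\begin{align}
M_{ \widetilde{ Q } }^\lambda \left( \frac{ \| g_1 \|_1 }{ \| f_1 \|_1 } , \dots , \frac{ \| g_m \|_1 }{ \| f_m \|_1 } \right)
< M_{ Q_d ( p ) }^\lambda \left( \frac{ \| g_1 \|_1 }{ \| f_1 \|_1 } , \dots , \frac{ \| g_m \|_1 }{ \| f_m \|_1 } \right) = K,
\end{align}
which is \eqref{eq:power best claim}. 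Here we use $ \widetilde{ Q } < Q_d ( p ) \leq \infty $, so the fact applies even when $ \widetilde{ Q } = - \infty $.

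The cases are as follows. For $ - \infty < p \leq 1 / d $, take $ \psi = \chi_D $ with $ D $ the unit cube. This $ \psi $ is $ ( - p ) $-concave by Example~\ref{ex:convex set}, and $ 0 < \| \psi \|_1 < \infty $. Take $ \kappa_i = \omega = 1 $ and $ \theta = 0 $ in Proposition~\ref{prop:equality}, and by \eqref{eq:equality proof L1 norm} the ratios are $ \gamma^{ d p } \gamma_i^{ 1 - d p } $.

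If $ p < 1 / d $, choose $ \gamma_1 = 2 $ and $ \gamma_2 = \dots = \gamma_m = 1 $, which is possible since $ m \geq 2 $. Since $ 1 - d p \neq 0 $, the ratios are then non-constant and positive. The same choice of $ \gamma_i $ also settles the edge case $ p = 0 $, where the ratios are $ \gamma_i $ themselves.

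If $ p = 1 / d $, the ratios are all equal to $ \gamma $, so this family fails. In that case $ Q_d ( p ) = \infty $, and we need $ K > M_{ \widetilde{ Q } }^\lambda ( \cdots ) $ for a finite $ \widetilde{ Q } $ (or $ - \infty $). This is the main obstacle. The plan is to modify the example by scaling the outputs, $ g_i \mapsto c_i g $ with distinct constants $ c_i $. Through the reduction of Section~\ref{subsec:original BBL same output}, this is equivalent to $ f_i \mapsto f_i / c_i $, i.e.\ changing $ \kappa_i $. Proposition~\ref{prop:equality} already allows arbitrary $ \kappa_i $, but there $ g_i $ carries the same $ \kappa_i $, so the ratios stay unchanged.

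Instead, for $ p = 1 / d $ we would take the $ p < 1 / d $ examples and pass to the limit. For each $ p' < 1 / d $ close to $ 1 / d $, Theorem~\ref{thm:main-BBL} at $ p = 1 / d $ compares $ K $ with the max of the ratios. More simply, we can construct directly: let $ f_i = \chi_{ \gamma_i^{ 1 / d } D } $ with $ D $ a convex body containing $ 0 $, and let $ g_i = \gamma_i \chi_{ \gamma^{ 1 / d } D } $ with $ \gamma = M_{ 1 / d }^\lambda ( \gamma ) $. This reproduces Proposition~\ref{prop:equality} with ratios equal to $ \gamma $, which is constant.

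So for $ p = 1 / d $ we instead take $ g_1 = \gamma_1 \chi_{ \gamma^{ 1 / d } D } $ and $ g_i = \gamma_i \chi_{ \gamma^{ 1 / d } D } $ only on the part needed. The plan is a direct verification: choose $ f_i $ equal to indicators of cubes of different sizes and $ g_i $ equal to indicators of a fixed large cube with different heights $ c_i $. Then compute $ K $ from $ M_{ 1 / d }^\lambda $ of the ratios $ c_i / f_i $, and compare with $ M_{ \widetilde{ Q } }^\lambda ( c_i | E | / | D_i | ) $, tuning the sizes so that the ratios are unequal. This is the step that would need the most care.

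For $ p = - \infty $, use Proposition~\ref{prop:equality minus infinity} with $ s_1 = 2 $ and $ s_i = 1 $ for $ i \geq 2 $. The ratios $ ( s / s_i )^d $ are then non-constant, and the same strict monotonicity argument with $ Q_d ( - \infty ) = - 1 / d $ finishes.
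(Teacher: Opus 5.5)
\textbf{Gap in the case $p = 1/d$.} Your cases $-\infty < p < 1/d$ (Proposition~\ref{prop:equality} with $\gamma_1 \neq \gamma_2$) and $p = -\infty$ (Proposition~\ref{prop:equality minus infinity} with $s_1 \neq s_2$) are correct and match the paper's argument. The case $p = 1/d$, however, is never actually settled.

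Your final plan uses indicators of cubes for $f_i$, a common cube $E$ with heights $c_i$ for $g_i$, and sizes tuned ``so that the ratios are unequal.'' This lacks the decisive idea. Unequal ratios alone cannot work, because at $p = 1/d$ there is no equality case with unequal ratios for strict monotonicity to act on. Concretely, take $f_i = \chi_{t_i D}$ and $g_i = c_i \chi_{TD}$ with $T = \sum_j \lambda_j t_j$, which is the smallest admissible $E$. Then $K = M_{1/d}^\lambda(c)$, the ratios are $c_i (T/t_i)^d$, and
$\sum_j \lambda_j c_j^{1/d} = \sum_j \lambda_j t_j (c_j^{1/d}/t_j) \leq T \max_i (c_i^{1/d}/t_i)$,
with equality only when all ratios coincide. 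So the sizes must be tied to $\widetilde Q$ itself.

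Your appeal to Theorem~\ref{thm:main-BBL} at $p = 1/d$ also points the wrong way. It bounds $K$ from above, whereas you need a lower bound on $K$.

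\textbf{How the paper closes it.} The paper first reduces to $0 < \widetilde Q < \infty$ by monotonicity in the power. It sets $r = \widetilde Q / (1 + d \widetilde Q) < 1/d$, so that $Q_d(r) = \widetilde Q$. It then takes $f_i = \chi_D(x/a_i^r)$ and $g_i = a_i \chi_D(x/U^r)$, with $a_1 \neq a_2$ and $U = M_r^\lambda(a)$.
\begin{itemize}
\item Convexity of $D$ gives $g_i(z_\lambda(x))/f_i(x_i) = a_i$ on $S$, hence $K = M_{1/d}^\lambda(a)$.
\item Lemma~\ref{lem:gamma equality} shows that $M_{\widetilde Q}^\lambda$ of the ratios equals $U = M_r^\lambda(a)$.
\item Strict monotonicity gives $M_r^\lambda(a) < M_{1/d}^\lambda(a)$.
\end{itemize}
So the strictness is placed on the $K$ side, not on the ratio side.

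\textbf{An alternative fix.} The limiting idea you abandoned can be completed directly. For fixed functions, Fact~\ref{fact:weighted power mean properties}~\ref{item:weighted power mean properties power monotone main} shows that $K$ computed with power $1/d$ dominates $K$ computed with any $p' < 1/d$. Pick $p' < 1/d$ with $Q_d(p') > \widetilde Q$; this is possible since $Q_d(p') \to \infty$ as $p' \uparrow 1/d$. Then your $p'$-equality example with $\gamma_1 \neq \gamma_2$ yields
$K \geq M_{Q_d(p')}^\lambda(\text{ratios}) > M_{\widetilde Q}^\lambda(\text{ratios})$.
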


\begin{proof}

First, we prove the case of $ - \infty < p < 1 / d $.
In Proposition~\ref{prop:equality}, we choose the parameters $ \kappa_i $, $ \gamma_i $, $ \omega $, and $ \theta $ so that \eqref{eq:equality proof unit} and $ \gamma_1 \neq \gamma_2 $ hold, and define $ f_i $ and $ g_i $ as in that proposition.
Since $ p < 1 / d $, one has $ 1 - d p > 0 $.
Thus, we have
\begin{align}
    \left( \frac{ \| g_1 \|_1 }{ \| f_1 \|_1 } , \frac{ \| g_2 \|_1 }{ \| f_2 \|_1 } , \dots , \frac{ \| g_m \|_1 }{ \| f_m \|_1 } \right)
    = \gamma^{ d p } ( \gamma_1^{ 1 - d p } , \gamma_2^{ 1 - d p } , \dots , \gamma_m^{ 1 - d p } )
\end{align}
by \eqref{eq:equality proof L1 norm}.
This is not a constant vector because $ \gamma_1 \neq \gamma_2 $.
Therefore, Fact~\ref{fact:weighted power mean properties} \ref{item:weighted power mean properties power monotone strict} gives
\begin{align}
    M_{ \widetilde{ Q } }^\lambda \left( \frac{ \| g_1 \|_1 }{ \| f_1 \|_1 } , \frac{ \| g_2 \|_1 }{ \| f_2 \|_1 } , \dots , \frac{ \| g_m \|_1 }{ \| f_m \|_1 } \right)
    < M_{ Q_d ( p ) }^\lambda \left( \frac{ \| g_1 \|_1 }{ \| f_1 \|_1 } , \frac{ \| g_2 \|_1 }{ \| f_2 \|_1 } , \dots , \frac{ \| g_m \|_1 }{ \| f_m \|_1 } \right). \label{eq:power best strict}
\end{align}
Proposition~\ref{prop:equality} then yields \eqref{eq:power best claim}.

Next we prove the case of $ p = - \infty $.
In Proposition~\ref{prop:equality minus infinity}, we choose the parameters $ \kappa_i $, $ s_i $, $ \omega $, and $ \theta $ so that \eqref{eq:equality proof unit} and $ s_1 \neq s_2 $ hold, and define $ f_i $ and $ g_i $ as in that proposition.
We have
\begin{align}
    \left( \frac{ \| g_1 \|_1 }{ \| f_1 \|_1 } , \frac{ \| g_2 \|_1 }{ \| f_2 \|_1 } , \dots , \frac{ \| g_m \|_1 }{ \| f_m \|_1 } \right)
    = s^d \left( \frac{ 1 }{ s_1^d } , \frac{ 1 }{ s_2^d } , \dots , \frac{ 1 }{ s_m^d } \right)
\end{align}
by \eqref{eq:equality minus infinity proof L1 norm}.
This is not a constant vector because $ s_1 \neq s_2 $.
Thus, Fact~\ref{fact:weighted power mean properties} \ref{item:weighted power mean properties power monotone strict} gives \eqref{eq:power best strict}, and hence Proposition~\ref{prop:equality minus infinity} yields \eqref{eq:power best claim}.

Finally, suppose $ p = 1 / d $.
By Fact~\ref{fact:weighted power mean properties} \ref{item:weighted power mean properties power monotone main}, if \eqref{eq:power best claim} holds for some $ 0 < \widetilde{ Q } < \infty $, then
\begin{align}
M_{ \widehat{ Q } }^\lambda \left( \frac{ \| g_1 \|_1 }{ \| f_1 \|_1 } , \frac{ \| g_2 \|_1 }{ \| f_2 \|_1 } , \dots , \frac{ \| g_m \|_1 }{ \| f_m \|_1 } \right)
\leq M_{ \widetilde{ Q } }^\lambda \left( \frac{ \| g_1 \|_1 }{ \| f_1 \|_1 } , \frac{ \| g_2 \|_1 }{ \| f_2 \|_1 } , \dots , \frac{ \| g_m \|_1 }{ \| f_m \|_1 } \right)
< K
\end{align}
for any $ - \infty \leq \widehat{ Q } \leq 0 $.
Thus, it suffices to assume $ 0 < \widetilde{ Q } < \infty $.

We set $ r \coloneqq \widetilde{ Q } / ( 1 + d \widetilde{ Q } ) $.
We choose $ a = ( a_1 , a_2 , \dots , a_m ) \in ( \mathbb{ R }_{ > 0 } )^m $ with $ a_1 \neq a_2 $, and put $ U \coloneqq M_r^\lambda ( a ) $.
Let $ D \subset \mathbb{ R }^d $ be a convex set of finite positive measure.
For $ i = 1 , 2 , \dots , m $, we define
\begin{align}
    f_i ( x ) \coloneqq \chi_D \left( \frac{ x }{ a_i^r } \right), & &
    g_i ( x ) \coloneqq a_i \chi_D \left( \frac{ x }{ U^r } \right).
\end{align}
Since
\begin{align}
    \sum_{ i = 1 }^m \lambda_i \left( \frac{ a_i }{ U } \right)^r
    = 1,
\end{align}
convexity of $ D $ implies $ z_\lambda ( x ) / U^r \in D $ for any $ x \in S $.
Thus, we have $ K = M_{ 1 / d }^\lambda ( a ) $.

On the other hand, we get
\begin{align}
    \| f_i \|_1 = a_i^{ d r } | D | , & &
    \| g_i \|_1 = a_i U^{ d r } | D | , & &
    \frac{ \| g_i \|_1 }{ \| f_i \|_1 } = U^{ d r } a_i^{ 1 - d r }.
\end{align}
Therefore, by Fact~\ref{fact:weighted power mean properties} \ref{item:weighted power mean properties homogeneity}, we have
\begin{align}
    M_{ \widetilde{ Q } }^\lambda \left( \frac{ \| g_1 \|_1 }{ \| f_1 \|_1 } , \frac{ \| g_2 \|_1 }{ \| f_2 \|_1 } , \dots , \frac{ \| g_m \|_1 }{ \| f_m \|_1 } \right)
    = U^{ d r } M_{ \widetilde{ Q } }^\lambda ( a_1^{ 1 - d r } , a_2^{ 1 - d r } , \dots , a_m^{ 1 - d r } ).
\end{align}
Since $ r = \widetilde{ Q } / ( 1 + d \widetilde{ Q } ) $, we have $ \widetilde{ Q } = Q_d ( r ) $.
Thus, Lemma~\ref{lem:gamma equality} gives
\begin{align}
    M_{ \widetilde{ Q } }^\lambda \left( \frac{ \| g_1 \|_1 }{ \| f_1 \|_1 } , \frac{ \| g_2 \|_1 }{ \| f_2 \|_1 } , \dots , \frac{ \| g_m \|_1 }{ \| f_m \|_1 } \right)
    = U
    = M_r^\lambda ( a ).
\end{align}
Since $ r < 1 / d $, Fact~\ref{fact:weighted power mean properties} \ref{item:weighted power mean properties power monotone strict} yields
\begin{align}
M_r^\lambda ( a )
< M_{ 1 / d }^\lambda ( a )
= K.
\end{align}
Thus, we obtain \eqref{eq:power best claim}.
\end{proof}

\section{Extension to general linear coefficients and optimality}
\label{sec:general-linear-coefficients}

In this section, we replace the evaluation point $ z_\lambda ( x ) $ in Theorem~\ref{thm:main-BBL} by the linear combination associated with a tuple
\begin{align}
    \beta = ( \beta_1 , \beta_2 , \dots , \beta_m ) \in ( \mathbb{ R } \setminus \{ 0 \} )^m, \label{eq:beta definition}
\end{align}
namely
\begin{align}
    z_\beta ( x_1 , x_2 , \dots , x_m )
    \coloneqq \sum_{ i = 1 }^m \beta_i x_i .
\end{align}
We do not assume that $ \beta $ is a weight.
That is, we do not assume $ \beta_i > 0 $ or $ \sum_{ i = 1 }^m \beta_i = 1 $.
The resulting inequality follows immediately from Theorem~\ref{thm:main-BBL} by a change of variables, and the optimality statements of Section~\ref{sec:equality} remain valid as well.

Applying Theorem~\ref{thm:main-BBL} after a change of variables in each input function yields the following corollary.
For $ \beta = \lambda $, it agrees with Theorem~\ref{thm:main-BBL}.

\begin{corollary}
    \label{cor:general-linear-coefficients}

    Let $ d $, $ f_1 , f_2 , \dots , f_m $, $ g_1 , g_2 , \dots , g_m $, $ S $, $ \lambda $, and $ p $ be as in Theorem~\ref{thm:main-BBL}, and let $ \beta $ be as in \eqref{eq:beta definition}.
    We define
    \begin{align}
    K_\beta
    \coloneqq \einf_{ x = ( x_1 , x_2 , \dots , x_m ) \in S } M_p^\lambda \left( \frac{ g_1 ( z_\beta ( x ) ) }{ f_1 ( x_1 ) } , \frac{ g_2 ( z_\beta ( x ) ) }{ f_2 ( x_2 ) }, \dots , \frac{ g_m ( z_\beta ( x ) ) }{ f_m ( x_m ) } \right).
    \end{align}
    Then
    \begin{align}
    K_\beta
    \leq M_{ Q_d ( p ) }^\lambda \left( \left( \frac{ \lambda_1 }{ | \beta_1 | } \right)^d \frac{ \| g_1 \|_1 }{ \| f_1 \|_1 } , \left( \frac{ \lambda_2 }{ | \beta_2 | } \right)^d \frac{ \| g_2 \|_1 }{ \| f_2 \|_1 } , \dots , \left( \frac{ \lambda_m }{ | \beta_m | } \right)^d \frac{ \| g_m \|_1 }{ \| f_m \|_1 } \right). \label{eq:general-linear-coefficients claim}
    \end{align}
    
\end{corollary}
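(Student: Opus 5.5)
The plan is to reduce the corollary to Theorem~\ref{thm:main-BBL} by rescaling each input function. We set $c_i \coloneqq \beta_i / \lambda_i \neq 0$ and define new input functions
\begin{align}
    \widetilde{ f_i } ( y ) \coloneqq f_i \left( \frac{ y }{ c_i } \right), \qquad y \in \mathbb{ R }^d ,
\end{align}
so that $\widetilde{ f_i } ( c_i x_i ) = f_i ( x_i )$. Putting $y_i = c_i x_i$, we get $\lambda_i y_i = \beta_i x_i$, hence $z_\lambda ( y ) = z_\beta ( x )$ with $y = ( y_1 , \dots , y_m )$. The output functions $g_i$ are left unchanged. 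Then $S ( \widetilde{ f_i } ) = c_i S ( f_i )$, and the map $\Phi \colon x \mapsto ( c_1 x_1 , \dots , c_m x_m )$ is a linear bijection of $( \mathbb{ R }^d )^m$ carrying $S$ onto $\widetilde{ S } \coloneqq S ( \widetilde{ f_1 } ) \times \dots \times S ( \widetilde{ f_m } )$.

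The first key step is to check that $K_\beta$ equals the quantity $\widetilde{ K }$ obtained from \eqref{eq:main theorem essential inf} with inputs $\widetilde{ f_i }$ and outputs $g_i$. Pointwise we have $\widetilde{ L } ( \Phi ( x ) ) = M_p^\lambda \bigl( g_1 ( z_\beta ( x ) ) / f_1 ( x_1 ) , \dots \bigr)$. Since $\Phi$ is an invertible linear map, it maps null sets to null sets and back. An inequality holding almost everywhere on $S$ is therefore equivalent to the same inequality holding almost everywhere on $\widetilde{ S }$, and so the essential infima agree. The only subtlety is measure-theoretic: we need preservation of null sets under $\Phi$ and $\Phi^{ - 1 }$. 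This is immediate because $\Phi$ scales Lebesgue measure by the constant $\prod_i | c_i |^d$.

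The second step is to compute norms. By the change of variables $y = c_i x$ we get $\| \widetilde{ f_i } \|_1 = | c_i |^d \| f_i \|_1$, which lies in $( 0 , \infty )$, so the hypotheses of Theorem~\ref{thm:main-BBL} hold. Hence
\begin{align}
    \frac{ \| g_i \|_1 }{ \| \widetilde{ f_i } \|_1 } = \left( \frac{ \lambda_i }{ | \beta_i | } \right)^d \frac{ \| g_i \|_1 }{ \| f_i \|_1 } .
\end{align}
Applying Theorem~\ref{thm:main-BBL} to $\widetilde{ f_1 } , \dots , \widetilde{ f_m } , g_1 , \dots , g_m$ then gives exactly \eqref{eq:general-linear-coefficients claim}.

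I do not expect a real obstacle. The step needing the most care is the essential-infimum transfer in the first key step, and it follows directly from the invertibility of the block-diagonal scaling $\Phi$. Negative $\beta_i$ cause no trouble, since only $| c_i |$ enters the Jacobian, which is why the absolute values appear in the claim.
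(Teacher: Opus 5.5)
Your proposal is correct and follows the paper's proof: your $\widetilde{f_i}(y) = f_i(y/c_i)$ with $c_i = \beta_i/\lambda_i$ is exactly the paper's $f_i^\sharp(x) = f_i(\lambda_i x/\beta_i)$. The norm computation $\|\widetilde{f_i}\|_1 = (|\beta_i|/\lambda_i)^d \|f_i\|_1$ and the use of the invertible linear change of variables to preserve the essential infimum are the same as in the paper. You also spell out the identity $z_\lambda(y) = z_\beta(x)$ and the null-set transfer under $\Phi$ in more detail than the paper does.
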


\begin{proof}
    For $ i = 1 , 2 , \dots , m $, we define $ f_i^\sharp ( x ) \coloneqq f_i ( \lambda_i x / \beta_i ) $.
    Then
    \begin{align}
        \| f_i^\sharp \|_1
        = \left( \frac{ | \beta_i | }{ \lambda_i } \right)^d \| f_i \|_1.
    \end{align}
    Since the linear map $ x \mapsto \lambda_i x / \beta_i $ is invertible, the essential infimum is preserved under this change of variables.
    Replacing each $ f_i $ by $ f_i^\sharp $ and applying Theorem~\ref{thm:main-BBL} proves the assertion.
\end{proof}

The invertible changes of variables used in the proof also transfer the optimality statements in Propositions~\ref{prop:equality}, \ref{prop:equality minus infinity}, and \ref{prop:power best} to Corollary~\ref{cor:general-linear-coefficients}.
More precisely, the least multiplicative constant $ C $ for which
\begin{align}
K_\beta
\leq C M_{ Q_d ( p ) }^\lambda \left( \left( \frac{ \lambda_1 }{ | \beta_1 | } \right)^d \frac{ \| g_1 \|_1 }{ \| f_1 \|_1 } , \left( \frac{ \lambda_2 }{ | \beta_2 | } \right)^d \frac{ \| g_2 \|_1 }{ \| f_2 \|_1 } , \dots , \left( \frac{ \lambda_m }{ | \beta_m | } \right)^d \frac{ \| g_m \|_1 }{ \| f_m \|_1 } \right)
\end{align}
holds is $ 1 $.
Moreover, the power $ Q_d ( p ) $ on the right-hand side of \eqref{eq:general-linear-coefficients claim} is also optimal when $ m \geq 2 $.
Indeed, for any $ - \infty \leq \widetilde{ Q } < Q_d ( p ) $, the inequality obtained from \eqref{eq:general-linear-coefficients claim} by replacing $ M_{ Q_d ( p ) }^\lambda $ with $ M_{ \widetilde{ Q } }^\lambda $ does not hold with multiplicative constant $ 1 $.

\section*{Acknowledgment}
The author is grateful to Toshihisa Kubo and Hiroshi Tsuji for their helpful comments on the manuscript.
He is also grateful to Hiroshi Tsuji for bringing the paper of Cordero-Erausquin and Maurey to his attention and for pointing out its relevance to the present work.

During the preparation of this work, the author used ChatGPT (OpenAI) as an aid in developing the organization of the manuscript, translating Japanese drafts into English, language editing, and improving the exposition.
The author reviewed and revised all AI-assisted output and independently verified the mathematical content and references.

\printbibliography

\noindent
Takashi Satomi: 

\noindent
School of System Design and Technology, Department of Mathematics and Data Science, Tokyo Denki University, Adachi-ku, Tokyo 120-8551, Japan.

\noindent
RIKEN Interdisciplinary Theoretical and Mathematical Sciences (iTHEMS), Wako, Saitama 351-0198, Japan.

\noindent
E-mail: takashi.satomi@mail.dendai.ac.jp

\end{document}